\newtheorem{theorem}{Theorem}[section]
\newtheorem{lemma}[theorem]{Lemma}
\newtheorem{remark}[theorem]{Remark}
\newcommand{\xqedhere}[2]{%
\rlap{\hbox to#1{\hfil\llap{\ensuremath{#2}}}}}
\newcommand{\R}{{\if mm {\rm I}\mkern -3mu{\rm R}\else \leavevmode
\hbox{I}\kern -.17em\hbox{R} \fi}}
\theoremstyle{plain}
\newtheorem{prop}{Proposition}[section]
\newcommand{\eps}{\epsilon}
\newcommand{\bX}{{\mathbf X}}
\newcommand{\Grad}[1]{\nabla #1}
		\title{ A new flow dynamic approach  for Wasserstein gradient flows }
\date{}
\begin{document}

\author{
		Qing Cheng\thanks{Key Laboratory of Intelligent Computing and Application (Tongji University), Ministry of Education, Department of Mathematics; Tongji University, Shanghai, China 200092 ({\tt qingcheng@tongji.edu.cn})}
		\and
		Qianqian Liu\thanks{School of Mathematical Sciences; Fudan University, Shanghai, China 200433 ({\tt qianqianliu21@m.fudan.edu.cn})} 
		\and 
		Wenbin Chen\thanks{
		Shanghai Key Laboratory for Contemporary Applied Mathematics, School of Mathematical Sciences; Fudan University, Shanghai, China 200433 ({\tt wbchen@fudan.edu.cn})} 
		\and
		Jie Shen\thanks{Eastern Institute of Technology, Ningbo, Zhejiang, China 315200 ({\tt jshen@eitech.edu.cn})} 
}
\footnotetext[1]{Corresponding author: Qianqian Liu.}
	\maketitle
	\begin{abstract}{
	We  develop in this paper a new regularized flow dynamic approach to construct efficient numerical schemes for Wasserstein gradient flows in Lagrangian coordinates.  Instead of approximating the Wasserstein distance  which needs to solve constrained minimization problems, we reformulate the problem using  the Benamou-Brenier's  flow dynamic approach, leading to  algorithms which  only need to solve unconstrained minimization problem in $L^2$ distance.  
	 Our schemes  automatically inherit  some essential properties of  Wasserstein gradient systems such as positivity-preserving, mass conservative and energy dissipation.  We present ample numerical simulations of Porous-Medium equations, Keller-Segel equations and Aggregation equations to validate the accuracy and stability of the proposed schemes.  Compared to numerical schemes in Eulerian coordinates,  our new schemes can capture sharp interfaces for  various Wasserstein gradient flows using relatively smaller number of unknowns. 
	
	  \medskip
	\noindent{\bfseries Keywords}: Wasserstein gradient flow;  Energetic variational approach; Lagrangian coordinates; Structure preservation
	
	\medskip
	\noindent{\bfseries Mathematics Subject Classification:}  65M06, 65M12, 35K65, 35A15
		}
	\end{abstract}

	\section{Introduction}
	Equations that are gradient flows in the Wasserstein metric  arise in many physical and biological applications, including  Porous-Medium equations \cite{vazquez2007porous,aronson2006porous},  Poisson-Nernst-Planck equations \cite{eisenberg2010energy,eisenberg1996computing} and Keller-Segel equations \cite{horstmann20031970,winkler2010aggregation,keller1970initiation}.
In this paper, we  consider numerical approximation of Wasserstein gradient flows which	 take the following form \cite{ambrosio2005gradient}:
	\begin{align}\label{eq:wd}
	\partial_t\rho=-\nabla\cdot(\rho {\bm v}), \qquad {\bm v}=-\nabla\frac{\delta E}{\delta \rho},
	\end{align}
	with the initial value $\rho({\bm x},0)=\rho_0({\bm x})\ge 0$, where $\rho({\bm x},t)$ is the particle density on the domain $\Omega\subset\mathbb{R}^d\ (d\ge 1)$, and ${\bm v}$ is the velocity field of the transport equation, 
and the energy functional is given by \cite{carrillo2003kinetic}
\begin{align}
	E(\rho)=\int_{\Omega}F(\rho)\mathrm{d}{\bm x}=\int_{\Omega}U(\rho({\bm x}))+V({\bm x})\rho({\bm x})\mathrm{d}{\bm x}+\frac{1}{2}\int_{\Omega\times\Omega}W({\bm x}-{\bm y})\rho({\bm x})\rho({\bm y})\mathrm{d}{\bm x}\mathrm{d}{\bm y},
\end{align}
where $F(\rho)$ is the energy density, $U(\rho)$ is the internal energy which can be taken as  $U_m(s)=s\log s$ for $m=1$ and $U_m(s)=\frac{s^m}{m-1}$ for $m>1$, $V({\bm x})$ is a drift potential and $W({\bm x},{\bm y})=W({\bm y},{\bm x})$ is an interaction potential, see also \cite{benamou2016augmented,carrillo2021lagrangian}. 
Solutions of the Wasserstein gradient flows possesses three essential properties: positivity-preserving; mass conservation; and the energy  dissipation in the sense that 
\begin{align}
\frac{\mathrm{d}}{\mathrm{d}t}E(\rho)(t)=-\int_{\Omega}\rho |{\bm v}|^2\mathrm{d}{\bm x}.
\end{align}

In recent years, considerable attention have been devoted to  constructing structure-preserving schemes  of various Wasserstein gradient flows \eqref{eq:wd}, \ e.g, Porous-Medium equation \cite{liu2023envara,duan2019pme,liu2020lagrangian,zhang2009numerical,ngo2017study}, Fokker-Planck equation \cite{duan2021structure,jordan1998variational,liu2004numerical,liu2016entropy,pareschi2018structure}, Keller-Segel equation \cite{wang2022fully,chen2022error,shen2020unconditionally,liu2018positivity,filbet2006finite}, drift diffusion and aggregation equation \cite{carrillo2022primal}.  Specially, numerical methods based on the approximation for the Wasserstein metric have attracted much more attention, see  \cite{carrillo2015finite,carrillo2022primal,carrillo2018lagrangian,li2020fisher,benamou2016augmented}. 
For example,  the celebrated   Jordan-Kinderlehrer-Otto (JKO) scheme \cite{jordan1998variational} is proposed to solve Fokker-Planck equation in \cite{jordan1998variational}. The scheme preserves essential properties of the Wasserstein gradient flows, but a key  difficulty of its numerical implementation is to approximate the Wasserstein metric efficiently.  Benamou and Brenier discovered   \cite{benamou2000computational} that the Wasserstein metric can be  rewritten  into the  Brenier formulae \cite{benamou2000computational}. 

\begin{figure}[htbp!]
\centering
\includegraphics[width=0.8\textwidth,clip==]{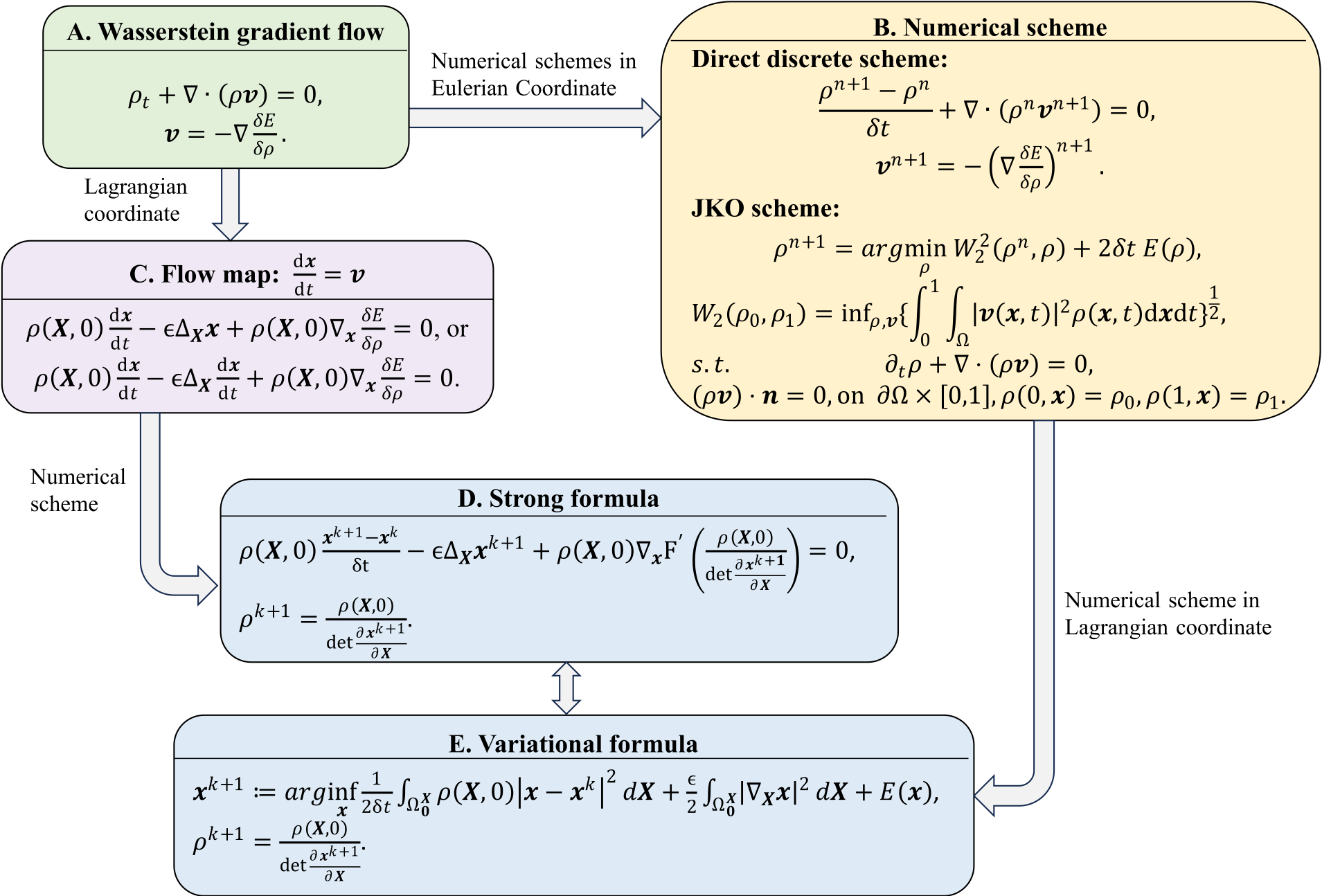}
	\caption{Relationship between the strong formula and the variational formula for Wasserstein gradient flows.}\label{fig:flow}
\end{figure}

 In the present work, we will adapt Benamou-Brenier's dynamic formulation for the Wasserstein metric \cite{benamou2000computational}. In particular, given two measures $\rho_0$ and $\rho_1$, their Wasserstein distance can be obtained by solving
\begin{align}\label{JKO:ori:1}
\begin{cases}
	&W_2(\rho_0,\rho_1)=\inf_{\rho,\bm{v}}\{\int_{0}^1\int_{\Omega}|{\bm v}({\bm x},t)|^2\rho({\bm x},t)\mathrm{d}{\bm x}\mathrm{d}t\}^{\frac{1}{2}},\\
&s.t. \qquad \partial_t\rho+\nabla\cdot(\rho \bm{v})=0,\\
&(\rho v)\cdot \bm{n}=0 \quad \text{on}\quad \partial\Omega\times[0,1],\quad \rho(0,{\bm x})=\rho_0,\ \rho(1,{\bm x})=\rho_1,
\end{cases}
\end{align}
where ${\bm n}$ is the outer unit normal on the boundary of the domain $\Omega$. Denote ${\bm m}=\rho{\bm v}$, then the original JKO scheme, proposed by Jordan, Kinderlehrer and Otto \cite{jordan1998variational},  can be reformulated into the following equivalent form \cite{li2020fisher,carrillo2022primal}: given $\rho^k$, solve $\rho^{k+1}=\rho(1,{\bm x})$ as
\begin{align}\label{JKO:ori}
\begin{cases}
	&(\rho,{\bm m})=\arg\inf_{\rho,\bm{v}}\frac{1}{2\delta t}\int_{0}^{1}\int_{\Omega}G(\rho,{\bm m})\mathrm{d}{\bm x}\mathrm{d}t+E(\rho(1,{\bm x})),\\
	&s.t. \qquad \partial_t\rho+\nabla\cdot(\rho \bm{v})=0,\\
	&(\rho{\bm v})\cdot \bm{n}=0 \quad \text{on}\quad \partial\Omega\times[0,1],\quad \rho(0,{\bm x})=\rho^k,
	\end{cases}
\end{align}
where $G(\rho,{\bm m})$ is defined by
\begin{align*}
G(\rho,{\bm m})=\begin{cases}
\frac{{\bm m}^2}{\rho},\qquad &\text{if } \rho>0,\\
0,\qquad &\text{if } (\rho,{\bm m})=(0,{\bm 0}),\\
+\infty,\qquad & \text{otherwise}.
\end{cases}
\end{align*}
Various numerical methods \cite{liu2023dynamic,kinderlehrer2017wasserstein,cances2020variational,carrillo2018lagrangian,carrillo2022primal,bonet2022efficient} are proposed based on the dynamic formulation \eqref{JKO:ori}. But they  usually require solving,  at each time step, a constrained minimization problem which can be difficult and costly. We adopt in this paper the flow dynamic approach  to develop a new class of numerical  schemes in Lagrangian coordinates  which only need to solve an unconstrained minimization problem at  each time step. The relationship between the strong formula and variational formula is shown in Figure~\ref{fig:flow}. The strong formula is obtained directly by discretizing the flow map equation, while the variational formula is derived by combining the JKO scheme and the flow map. It is noteworthy that the two forms are equivalent. This flow dynamic approach also enjoys the following advantages:
\begin{itemize}
\item It allows us to construct positivity-preserving schemes for Wasserstein gradient systems with positive solutions, and the schemes can also conserve mass   in both semi-discrete and fully-discrete cases.

\item Since the schemes are derived from  an energetic variational approach, they are energy dissipative  for Wasserstein gradient flows.
\item  The flow dynamic approach, behaving like a moving mesh method,  can automatically capture the trajectory of movement in Lagrangian coordinates, and as a consequence, requires fewer spatial points than an Eulerian approach to capture the interface movements or solutions with large gradients.

\end{itemize}
We present several numerical results to validate the proposed approach. In particular, 
For Porous-Medium equations, our numerical schemes can  accurately capture the sharp interface, and  obtain the correct finite propagation  speed and waiting time; and for the  Keller-Segel equations, they allow us to simulate the phenomenon of blow-up.

The rest of the paper is organized as follows. In Section \ref{sec:semi}, the semi-discrete numerical scheme is proposed for the Wasserstein gradient flow \eqref{eq:wd}. In Section \ref{sec:fully discrete}, fully discrete numerical schemes for the Wasserstein gradient flow \eqref{eq:wd} in 1D and 2D are constructed and analyzed, respectively. 
 Numerical experiments in 1D and 2D are carried out in Section \ref{sec:num} to validate the theoretical results.

\section{Regularized flow dynamic approach}\label{sec:semi}
As our objective is to develop numerical methods for Wasserstein gradient flows based on flow dynamic approach, we will first   introduce this approach \cite{cheng2020new,liu2020lagrangian,wang2022some} and then apply it to Wasserstein gradient flows in the semi-discrete case.  

Given an initial position or a reference configuration ${\bm X}$, and a velocity field ${\bm v}$, define the flow map ${\bm x}({\bm X},t)$ \cite{cheng2020new,duan2021structure}  as follows:
\begin{align}
&	\frac{\mathrm{d}{\bm x}({\bm X},t)}{\mathrm{d} t}={\bm v}({\bm x}({\bm X},t),t),\label{flow}\\
&	{\bm x}({\bm X},0)={\bm X},\label{flow1}
\end{align}
where ${\bm x}$ is the Eulerian coordinate and ${\bm X}$ is the Lagrangian coordinate, $\frac{\partial {\bm x}}{\partial {\bm X}}$ represents the deformation associated with the flow map.
We assume that  $\bm v$ is the velocity such that
\begin{equation}\label{trans}
\partial_t\rho + \nabla\cdot(\rho {\bm v}) =0 . 
\end{equation}

\begin{figure}[htbp!]
\centering
\includegraphics[width=0.45\textwidth,clip==]{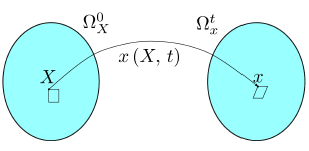}
\caption{A schematic illustration of a flow map $\bm x(\bm X,t)$ at a fixed time $t$: $\bm x(\bm X,t)$ maps $\Omega_0^{\bm X}$ to $\Omega_t^{\bm x}$. $\bm X$ is the Lagrangian coordinate while $\bm x$ is  the Eulerian coordinate, and  $F(\bm X,t)=\frac{\partial \bm x(\bm X,t)}{\partial \bm X}$ represents the deformation associated with the flow map. }\label{flow_map}
\end{figure}

Then the transport equation \eqref{trans} and the flow map \eqref{flow}-\eqref{flow1} determine the following kinematic relationship between the Eulerian and Lagrangian coordinates:
\begin{equation}\label{equal}
\frac{\rho({\bm X},0)}{\mathrm{det}\frac{\partial {\bm x}}{\partial {\bm X}}}= \rho(\bm x,t). 
\end{equation}

According to the Energetic Variational Approach \cite{eisenberg2010energy,liu2019energetic,onsager1931reciprocal,onsager1931reciprocal2,rayleigh1873some,liu2020variational}, we have
\begin{equation}\label{force}
F_{\text{conservative}}=- \frac{\delta E}{\delta {\bm x}} = \rho {\bm v} = F_{\text{dissipative}}.
\end{equation}
Combining \eqref{force} with Wasserstein gradient system \eqref{eq:wd}, we derive the following equivalent formulation:
\begin{equation}\label{force:2}
\rho {\bm v}=\rho\frac{\mathrm{d} {\bm x}}{\mathrm{d}t}=- \frac{\delta E}{\delta {\bm x}} = - \rho \Grad_{\bm x} \frac{\delta E}{\delta \rho},
\end{equation}
which can be written in Lagrangian coordinates as
\begin{equation}\label{balance}
 \rho({\bm X},0)\frac{\mathrm{d} {\bm x}}{\mathrm{d}t}   + \rho({\bm X},0)\Grad_{\bm x}\frac{\delta E}{\delta {\bm \rho}}  = 0.
\end{equation}

Notice that equation \eqref{balance} is a  fully nonlinear equation with respect to ${\bm x}$ in Lagrangian coordinate. 
By adding an extra regularizing term into \eqref{balance}, we obtain
 \begin{equation}\label{vis:force}
 \rho({\bm X},0)\frac{\mathrm{d} {\bm x}}{\mathrm{d}t}-\eps \Delta_{\bX} {\bm x}   +\rho(\bX,0)\Grad_{\bm x}\frac{\delta E}{\delta{\bm \rho}}  =0,
\end{equation}
where $\eps\ll 1$ is a regularization parameter. Taking the inner product of \eqref{vis:force} with ${\bm x}_t$, we obtain the following energy dissipative law: 
\begin{equation}\label{en:dis:x}
\frac{\mathrm{d}}{\mathrm{d}t} \tilde{E}(t) = -\int_{\Omega_0^{\bm X}}  \rho({\bm X},0)|\frac{\mathrm{d} {\bm x}}{\mathrm{d}t}|^2\mathrm{d}{\bm X},
\end{equation}
where $\tilde E$ is defined by $\tilde E(t) = E(t) +  \frac{\eps}{2}\int_{\Omega_0^{\bm X}}|\Grad_{\bX} {\bm x}|^2\mathrm{d}{\bm X} $.
\begin{remark}
Another regularized approach is
	 \begin{equation}\label{vis:force2}
		\rho({\bm X},0)\frac{\mathrm{d} {\bm x}}{\mathrm{d}t}-\eps \Delta_{\bX} \frac{\mathrm{d} {\bm x}}{\mathrm{d}t}   +\rho(\bX,0)\Grad_{\bm x}\frac{\delta E}{\delta{\bm \rho}}  =0,
	\end{equation}
	which satisfies the following energy dissipative law:
	\begin{equation*}
\frac{\mathrm{d}}{\mathrm{d}t}  E(t) =-\int_{\Omega_0^{\bm X}}  \rho({\bm X},0)|\frac{\mathrm{d} {\bm x}}{\mathrm{d}t}|^2\mathrm{d}{\bm X}  -\int_{\Omega_0^{\bm X}}\epsilon |\Grad_X \frac{\mathrm{d} {\bm x}}{\mathrm{d}t}|^2 \mathrm{d}\bX \le 0.
\end{equation*}
This  regularized approach can be used to construct second-order schemes for  \eqref{balance}.
\end{remark}

\subsection{Numerical schemes based on the FDM approach}
We observed from \eqref{en:dis:x} that the flow map ${\bm x}$ also satisfies a gradient flow structure in Lagrangian coordinate. A benefit of the gradient flow structure  is that  one can construct numerical schemes to inherit the energy dissipation of the  gradient flows.  In contrast to the classic JKO scheme \cite{jordan1998variational}, we shall construct numerical schemes based on FDM approach which do not involve the calculation of the Wasserstein metric, but only of a $L^2$ distance which is much   easier to obtain. 

\subsubsection{First-order scheme}
Given $\delta t>0$, $t^{k}=k\delta t$ for $k=0$, $1$, $\cdots$, $\frac{T}{\delta t}$, let ${\bm x}^k$ denote the numerical approximation   to ${\bm x}(\cdot, t^k)$.  
 Then, a first-order scheme  in a variational form to the solution of \eqref{vis:force} at $t^{k+1}$ can be obtained by  
\begin{align}\label{JKO:new}
\begin{cases}
&{\bm x}^{k+1}:=\arg\inf_{{\bm x}}\frac{1}{2\delta t}\int_{\Omega_0^{\bm X}}\rho({\bm X},0) |{\bm x}-{\bm x}^{k}|^2\mathrm{d}{\bm X}+\frac{\epsilon}{2}\int_{\Omega_0^{\bm X}} |\Grad_{\bX} {\bm x}|^2 \mathrm{d}\bX +E(\bm x),\\
&\rho^{k+1} = \frac{\rho({\bm X},0)}{\mathrm{det}\frac{\partial {\bm x^{k+1}}}{\partial {\bm X}}} . 
	\end{cases}
\end{align}


Motivated from the minimization problem \eqref{JKO:new}, 
a first-order semi-discrete  scheme for \eqref{eq:wd}  based on \eqref{vis:force} with $\epsilon=\delta t$ and \eqref{JKO:new} is:
\begin{align}
		&\rho({\bm X},0)\frac{{\bm x}^{k+1}-{\bm x}^k}{\delta t}-\delta t\Delta_{\bm X} {\bm x}^{k+1}+\rho({\bm X},0)\nabla_{\bm x}F'\left(\frac{\rho({\bm X},0)}{\mathrm{det}\frac{\partial {\bm x}^{k+1}}{\partial {\bm X}}}\right)=0,\label{eq:jko_1}\\
		&\rho^{k+1}=\frac{\rho({\bm X},0)}{\mathrm{det}\frac{\partial {\bm x}^{k+1}}{\partial {\bm X}}}, \label{eq:jko_11}
	\end{align}
	with the Dirichlet boundary condition ${\bm x}^{k+1}|_{\partial\Omega}={\bm X}|_{\partial\Omega}$, and $\Omega_0^{\bm x}=\Omega_0^{\bm X}$. 


We will show that  the numerical scheme \eqref{eq:jko_1}-\eqref{eq:jko_11} in Lagrangian coordinate achieves a first-order accuracy to \eqref{eq:wd} in Eulerian coordinate: 
\begin{prop}\label{prop1}
 The solution $\rho^{k+1}$ to numerical scheme \eqref{eq:jko_1}-\eqref{eq:jko_11} is  a first-order approximation to the exact solution of Wasserstein gradient flow \eqref{eq:wd} at $t^{k+1}$.
\end{prop}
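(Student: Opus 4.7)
The plan is to establish first-order consistency by substituting a sufficiently smooth exact solution of the Wasserstein gradient flow into the discrete scheme \eqref{eq:jko_1}--\eqref{eq:jko_11} and showing that the resulting residual is of size $O(\delta t)$. Let $\bm{x}^\star(\bm{X},t)$ denote the exact flow map solving the continuous equation \eqref{balance}, and set $\tilde{\bm{x}}^{k+1} := \bm{x}^\star(\bm{X}, t^{k+1})$ and $\tilde{\bm{x}}^{k} := \bm{x}^\star(\bm{X}, t^{k})$.

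First I would Taylor-expand in time around $t^{k+1}$ to obtain
\begin{equation*}
\frac{\tilde{\bm{x}}^{k+1} - \tilde{\bm{x}}^{k}}{\delta t} = \partial_t \bm{x}^\star(\bm{X}, t^{k+1}) + O(\delta t),
\end{equation*}
using boundedness of $\partial_{tt}\bm{x}^\star$ from the assumed smoothness. Next I would invoke the kinematic identity \eqref{equal} applied to the exact solution, which gives
\begin{equation*}
\frac{\rho(\bm{X},0)}{\mathrm{det}\frac{\partial \tilde{\bm{x}}^{k+1}}{\partial \bm{X}}} = \rho^\star(\bm{x}^\star(\bm{X},t^{k+1}), t^{k+1}),
\end{equation*}
so the nonlinear force term in \eqref{eq:jko_1} evaluated at $\tilde{\bm{x}}^{k+1}$ exactly reproduces $\rho(\bm{X},0)\nabla_{\bm{x}}\frac{\delta E}{\delta \rho}$ at time $t^{k+1}$ along the exact trajectory.

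Second, I would analyze the regularization contribution: since $\tilde{\bm{x}}^{k+1}$ is smooth in $\bm{X}$, the term $\delta t\,\Delta_{\bm{X}} \tilde{\bm{x}}^{k+1}$ is of size $O(\delta t)$. Assembling the three terms of \eqref{eq:jko_1} at $\tilde{\bm{x}}^{k+1}$ and subtracting the continuous identity $\rho(\bm{X},0)\partial_t \bm{x}^\star + \rho(\bm{X},0)\nabla_{\bm{x}}\frac{\delta E}{\delta \rho} = 0$ at $t^{k+1}$ yields a pointwise residual
\begin{equation*}
\mathcal{R}^{k+1}(\bm{X}) = O(\delta t),
\end{equation*}
which is the local truncation error of the scheme, establishing first-order consistency for the flow map. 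Combined with the stability inherited from the dissipation law \eqref{en:dis:x}, this gives $\bm{x}^{k+1} - \tilde{\bm{x}}^{k+1} = O(\delta t)$ in the appropriate norm.

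Finally, to transfer the error to the density I would use \eqref{eq:jko_11}. Writing
\begin{equation*}
\rho^{k+1} - \rho^\star(\bm{x},t^{k+1}) = \rho(\bm{X},0)\left(\frac{1}{\mathrm{det}\frac{\partial \bm{x}^{k+1}}{\partial \bm{X}}} - \frac{1}{\mathrm{det}\frac{\partial \tilde{\bm{x}}^{k+1}}{\partial \bm{X}}}\right),
\end{equation*}
a first-order expansion of the determinant, together with the assumption that the Jacobian stays bounded away from zero so that the discrete flow map remains a diffeomorphism, converts the $O(\delta t)$ flow-map error into an $O(\delta t)$ density error. The main obstacle will be handling the nonlinear coupling through $\nabla_{\bm{x}}F'(\rho(\bm{X},0)/\mathrm{det}(\partial \bm{x}^{k+1}/\partial \bm{X}))$, where the Eulerian gradient must be pulled back to Lagrangian coordinates via $\nabla_{\bm{x}} = (\partial \bm{x}/\partial \bm{X})^{-T}\nabla_{\bm{X}}$ and expanded consistently; the choice $\epsilon = \delta t$ is essential here, since any weaker scaling of the regularization would degrade the consistency order.
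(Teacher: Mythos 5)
Your core argument is essentially the paper's own proof: both reduce the scheme \eqref{eq:jko_1} to the form \eqref{dis:e}, identify the difference quotient $({\bm x}^{k+1}-{\bm x}^k)/\delta t$ with the flow-map velocity up to $O(\delta t)$, use the kinematic identity \eqref{equal} to recognize the nonlinear term as $\nabla_{\bm x}(\delta E/\delta\rho)$ along the trajectory, and observe that the regularization term $\delta t\,\Delta_{\bm X}{\bm x}^{k+1}/\rho({\bm X},0)$ contributes only $O(\delta t)$ precisely because $\epsilon=\delta t$ (the paper likewise needs $\rho({\bm X},0)\neq 0$ here, which you should state as an assumption rather than leave implicit). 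The paper stops at this consistency statement and calls it first-order accuracy; you attempt two further steps it does not take. The density-transfer step via a first-order expansion of the determinant is a reasonable addition and fills in something the paper leaves implicit. However, your claim that ``stability inherited from the dissipation law \eqref{en:dis:x}'' upgrades the $O(\delta t)$ truncation error to an $O(\delta t)$ bound on ${\bm x}^{k+1}-\tilde{\bm x}^{k+1}$ is not substantiated: the energy inequality controls $\tilde E({\bm x}^{k+1})$, not the error equation for this fully nonlinear scheme, and a genuine convergence proof would require a linearization and Gronwall-type argument that neither you nor the paper supplies. Since the proposition, as the paper proves it, is a consistency result, your proof of that part is correct and matches the paper; just be aware that the convergence claim is asserted rather than proved.
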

\begin{proof}
Firstly, we shall calculate the variational derivative of the energy $E$:
\begin{align}\label{var:de}
\delta E=&\int_{\Omega_0^{\bm X}}-F'\left(\frac{\rho({\bm X},0)}{\mathrm{det}\frac{\partial {\bm x}}{\partial {\bm X}}}\right)\frac{\rho({\bm X},0)}{\left(\mathrm{det}\frac{\partial {\bm x}}{\partial {\bm X}}\right)^2}\mathrm{det}\frac{\partial {\bm x}}{\partial {\bm X}}\frac{\partial \delta {\bm x}}{\partial {\bm X}}+F\left(\frac{\rho({\bm X},0)}{\mathrm{det}\frac{\partial {\bm x}}{\partial{\bm X}}}\right)\frac{\partial \delta {\bm x}}{\partial {\bm X}}\mathrm{d}{\bm X}\nonumber\\
	=&\int_{\Omega_0^{\bm X}}\partial_{\bm X}\left(F'\left(\frac{\rho({\bm X},0)}{\mathrm{det}\frac{\partial {\bm x}}{\partial {\bm X}}}\right)\frac{\rho({\bm X},0)}{\left(\mathrm{det}\frac{\partial {\bm x}}{\partial {\bm X}}\right)^2}\mathrm{det}\frac{\partial {\bm x}}{\partial X}-F\left(\frac{\rho({\bm X},0)}{\mathrm{det}\frac{\partial {\bm x}}{\partial {\bm X}}}\right)\right)\delta {\bm x}\mathrm{d}{\bm X}\nonumber\\
	=&\int_{\Omega_0^{\bm X}}\partial_{\bm X}\left(F'\left(\frac{\rho({\bm X},0)}{\mathrm{det}\frac{\partial {\bm x}}{\partial {\bm X}}}\right)\frac{\rho({\bm X},0)}{\mathrm{det}\frac{\partial {\bm x}}{\partial {\bm X}}}-F\left(\frac{\rho({\bm X},0)}{\mathrm{det}\frac{\partial {\bm x}}{\partial {\bm X}}}\right)\right)\delta {\bm x}\mathrm{d}{\bm X}\nonumber\\
	=&\int_{\Omega_0^{\bm X}}\partial_{\bm x}\left(F'\left(\rho\right)\rho-F\left(\rho\right)\right)\delta {\bm x}\mathrm{d}{\bm x}=\int_{\Omega_0^{\bm X}}\rho\partial_{\bm x}F'(\rho)\delta {\bm x}\mathrm{d}{\bm x}.
\end{align}
We can then derive from \eqref{var:de}  the variational derivatives of $E$ in  Eulerian coordinates and Lagrangian coordinates, respectively:
\begin{align}
&	\frac{\delta E}{\delta {\bm x}}=\rho\partial_{\bm x}F'(\rho),\\
&	
\frac{\delta E}{\delta {\bm x}}=\partial_{\bm X}\left(F'\left(\frac{\rho({\bm X},0)}{\mathrm{det}\frac{\partial {\bm x}}{\partial {\bm X}}}\right)\frac{\rho({\bm X},0)}{\mathrm{det}\frac{\partial {\bm x}}{\partial {\bm X}}}-F\left(\frac{\rho({\bm X},0)}{\mathrm{det}\frac{\partial {\bm x}}{\partial {\bm X}}}\right)\right)=\rho({\bm X},0)\frac{\partial_{\bm X}F'\left(\frac{\rho({\bm X},0)}{\mathrm{det}\frac{\partial {\bm x}}{\partial {\bm X}}}\right)}{\mathrm{det}\frac{\partial {\bm x}}{\partial {\bm X}}}.\label{eq:delta E}
\end{align}

	From \eqref{eq:delta E}, we observe that the last term in \eqref{eq:jko_1} is exactly the variational derivative of $E(\frac{\rho({\bm X},0)}{\mathrm{det}\frac{\partial {\bm x}}{\partial {\bm X}}  })$ with respective to ${\bm x}$ in Lagrangian coordinates. Then the equation \eqref{eq:jko_1} can be obtained by taking  the variational derivative of \eqref{JKO:new} with respect to ${\bm x}$, which also show that  the minimizer $({\bm x}^{k+1},\rho^{k+1})$ of the variational problem \eqref{JKO:new} is the solution of \eqref{eq:jko_1}-\eqref{eq:jko_11}. 
	Assuming that $\rho(\bX,0) \neq 0$ and rewriting the equation \eqref{eq:jko_1} into the following equivalent form:
	\begin{align}\label{dis:e}
		\frac{{\bm x}^{k+1}-{\bm x}^k}{\delta t}=\frac{\delta t}{\rho({\bm X},0)}\Delta_{\bm X} {\bm x}^{k+1}-\nabla_{\bm x}F'\left(\frac{\rho({\bm X},0)}{\mathrm{det}\frac{\partial {\bm x}^{k+1}}{\partial {\bm X}}}\right).
	\end{align}
Notice  the definition of flow map ${\bm v}({\bm X},t^{k+1}) =\frac{\mathrm{d}{\bm x}({\bm X},t^{k+1})}{\mathrm{d}t}$, we can easily derive from \eqref{dis:e} that
	\begin{align*}
		{\bm v}^{k+1}=-\nabla_{\bm x}F'(\rho({\bm x}^{k+1}))+O(\delta t)=-\nabla_{\bm x}(\frac{\delta E}{\delta \rho })^{k+1}+O(\delta t).
	\end{align*}
	Then we conclude that the numerical scheme \eqref{eq:jko_1} achieves a first-order accuracy in time for the Wasserstein gradient flow \eqref{eq:wd}.
\end{proof}
\begin{remark}
	It is worth mentioning that there are several choices to add the regularization term in scheme \eqref{JKO:new}, such as $\int_{\Omega_0^{\bm X}}\delta t\rho({\bm X},0)|\nabla_{\bm X} \bm{x}|^2\mathrm{d}{\bm X}$ and  $\int_{\Omega_0^{\bm X}}|\nabla_{\bm X}( \bm{x}-{\bm x}^k)|^2\mathrm{d}{\bm X}$, which can also be proved to be a first-order approximation to the Wasserstein gradient flow \eqref{eq:wd}. 
\end{remark}

\begin{remark}
	We can solve the  nonlinear  scheme \eqref{eq:jko_1}--\eqref{eq:jko_11} by using the damped Newton's iteration. Denote the linear and nonlinear operators by 
	\begin{align*}
		\mathcal{L}({\bm x}^{k+1}):=\frac{\rho({\bm X},0)}{\delta t}{\bm x}^{k+1}-\delta t\Delta_{\bm X}{\bm x}^{k+1},\qquad \mathcal{N}({\bm x}^{k+1}):=\rho({\bm X},0)\nabla_{\bm x}F'\left(\frac{\rho({\bm X},0)}{\mathrm{det}\frac{\partial {\bm x}^{k+1}}{\partial {\bm X}}}\right),
	\end{align*}
	and $b({\bm x}^k):=\frac{\rho({\bm X},0)}{\delta t}{\bm x}^{k}$,  then the scheme \eqref{eq:jko_1}  can be reformulated as a nonlinear system $$\mathcal{L}({\bm x}^{k+1})+\mathcal{N}({\bm x}^{k+1})=b({\bm x}^k),$$
	  which can be solved iteratively by
	finding ${\bm x}^{k+1,n+1}$, such that
	\begin{equation*}
	{\bm x}^{k+1,n+1}={\bm x}^{k+1,n} + \alpha \delta{\bm x},\quad\text{with}\quad \delta {\bm x} =\frac{b({\bm x}^{k})-\mathcal{L}({\bm x}^{k+1,n})-\mathcal{N}({\bm x}^{k+1,n})}{\mathcal{L}'({\bm x}^{k+1,n})+\mathcal{N}'({\bm x}^{k+1,n})}. 
	\end{equation*}
	where  $0 <\alpha<1$ is a damping coefficient.
\end{remark} 

\subsubsection{Second-order scheme}
In analogy to the first-order scheme \eqref{eq:jko_1}-\eqref{eq:jko_11}, we can construct a second-order scheme for \eqref{eq:wd} based on a Crank-Nicolson discretization of \eqref{vis:force2} with $\eps=\delta t^2$: 
\begin{align}\label{scheme:JKO:cn}
\begin{cases}
&\bm{x}^{k+1}=\arg\inf_{\bm{x}}\frac{1}{2\delta t}\int_{\Omega_0^{\bm X}}\rho({\bm X},0)|\bm{x}-\bm{x}^k|^2+\delta t^2(|\nabla_{\bm X} (\bm{x}- \bm{x}^k)|^2)\mathrm{d}{\bm X}\\
&\qquad\quad+\frac{1}{2}E\left(\frac{\rho({\bm X},0)}{\mathrm{det}\frac{\partial {\bm x}}{\partial {\bm X}}}\right)+\frac{1}{2}\int_{\Omega_0^{\bm X}}{\bm x}(\frac{\delta E}{\delta {\bm x}})^k\mathrm{d}{\bm X},\\
&\rho^{k+1}=\frac{\rho({\bm X},0)}{\mathrm{det}\frac{\partial {\bm x}^{k+1}}{\partial  {\bm X}}}.
\end{cases}
\end{align}

Next, we show that the above unconstrained minimization problem is equivalent to a Crank-Nicolson discretization.
\begin{theorem}
	For any $k>0$, the minimizer of the variational problem \eqref{scheme:JKO:cn} is the solution of the following second-order Crank-Nicolson scheme:  given $({\bm x}^k,\rho({\bm X},0))$, solve $({\bm x}^{k+1},\rho^{k+1})$ from
	\begin{align}
		&\rho({\bm X},0)\frac{{\bm x}^{k+1}-{\bm x}^k}{\delta t}-\delta t(\Delta_{\bm X} {\bm x}^{k+1}-\Delta_{\bm X} {\bm x}^{k})+\frac{1}{2}\rho({\bm X},0)\nabla_{\bm x}F'\left(\frac{\rho({\bm X},0)}{\mathrm{det}\frac{\partial {\bm x}^{k+1}}{\partial {\bm X}}}\right)\nonumber\\
		&\qquad\qquad\qquad\qquad\qquad\qquad\qquad\qquad+\frac{1}{2}\rho({\bm X},0)\nabla_{\bm x}F'\left(\frac{\rho({\bm X},0)}{\mathrm{det}\frac{\partial {\bm x}^{k}}{\partial {\bm X}}}\right)=0,\label{eq:jko_2}\\
				&\rho^{k+1}=\frac{\rho({\bm X},0)}{\mathrm{det}\frac{\partial {\bm x}^{k+1}}{\partial {\bm X}}}, \label{eq:jko_21}
	\end{align}
	with  the Dirichlet boundary condition ${\bm x}^{k+1}|_{\partial\Omega}={\bm X}|_{\partial\Omega}$. 
\end{theorem}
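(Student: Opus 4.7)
The plan is to derive the Euler--Lagrange equation associated with the unconstrained minimization problem \eqref{scheme:JKO:cn} and verify that it coincides with the Crank--Nicolson system \eqref{eq:jko_2}--\eqref{eq:jko_21}. Since the functional in \eqref{scheme:JKO:cn} depends on $\bm x$ through four separate terms, I would compute the first variation term by term, then combine the resulting expressions and set the total variation equal to zero for every admissible test variation $\delta\bm x$ vanishing on $\partial \Omega$ (so as to match the Dirichlet condition $\bm x^{k+1}|_{\partial\Omega}=\bm X|_{\partial\Omega}$).

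First I would handle the two purely quadratic terms. Differentiating $\frac{1}{2\delta t}\int\rho(\bm X,0)|\bm x-\bm x^k|^2\,d\bm X$ with respect to $\bm x$ yields the leading discrete-time-derivative term $\rho(\bm X,0)(\bm x-\bm x^k)/\delta t$. The regularizing term $\frac{\delta t}{2}\int|\nabla_{\bm X}(\bm x-\bm x^k)|^2\,d\bm X$ produces, after integration by parts and using $\delta\bm x|_{\partial\Omega}=0$, the contribution $-\delta t\,\bigl(\Delta_{\bm X}\bm x-\Delta_{\bm X}\bm x^k\bigr)$. These together account for the first two summands of \eqref{eq:jko_2}.

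Next I would compute the variation of the nonlinear energy term $\tfrac12 E\bigl(\rho(\bm X,0)/\det(\partial\bm x/\partial\bm X)\bigr)$. This is exactly the computation already carried out in \eqref{var:de} with an overall factor of $\tfrac12$, which gives $\tfrac12\,\rho(\bm X,0)\,\nabla_{\bm x}F'\!\bigl(\rho(\bm X,0)/\det(\partial\bm x/\partial\bm X)\bigr)$ evaluated at $\bm x=\bm x^{k+1}$; this is the third term of \eqref{eq:jko_2}. The linear term $\tfrac12\int\bm x\cdot(\delta E/\delta\bm x)^k\,d\bm X$ is straightforward: because $(\delta E/\delta\bm x)^k$ depends only on $\bm x^k$ and not on the variable $\bm x$ being varied, its first variation is simply $\tfrac12(\delta E/\delta\bm x)^k$, and invoking \eqref{var:de} at the previous time level identifies this with the fourth term of \eqref{eq:jko_2}. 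Summing the four contributions and equating to zero produces \eqref{eq:jko_2}, while \eqref{eq:jko_21} is merely the kinematic identity \eqref{equal} evaluated at $t^{k+1}$ once $\bm x^{k+1}$ has been determined.

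The main technical obstacle is the careful handling of the variational derivative of the nonlinear energy functional $E\bigl(\rho(\bm X,0)/\det(\partial\bm x/\partial\bm X)\bigr)$ in Lagrangian coordinates, in particular the cancellation between the two $\det(\partial\bm x/\partial\bm X)$ factors that reduces the expression to $\rho\,\partial_{\bm x}F'(\rho)$. Fortunately this is precisely the chain of identities already established in \eqref{var:de}--\eqref{eq:delta E}, so the computation reduces to a bookkeeping exercise with factors of $\tfrac12$. One should also verify that the boundary terms generated by integrating the regularization by parts vanish, which follows from the imposed Dirichlet boundary condition ${\bm x}^{k+1}|_{\partial\Omega}={\bm X}|_{\partial\Omega}$ (hence $\delta\bm x|_{\partial\Omega}=0$). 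With these ingredients in place, the identification between the minimizer of \eqref{scheme:JKO:cn} and the solution of \eqref{eq:jko_2}--\eqref{eq:jko_21} is complete.
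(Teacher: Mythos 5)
Your proposal is correct and follows essentially the same route as the paper: take the first variation of the functional in \eqref{scheme:JKO:cn} term by term, use the computation in \eqref{var:de}--\eqref{eq:delta E} for the nonlinear energy, and identify the resulting Euler--Lagrange equation with \eqref{eq:jko_2}, with \eqref{eq:jko_21} following from the kinematic relation \eqref{equal}; your treatment is in fact more explicit than the paper's one-line assertion of this step. The only item in the paper's proof that you do not address is the short consistency argument justifying the adjective ``second-order'': after dividing \eqref{eq:jko_2} by $\rho({\bm X},0)$, the averaged term $\frac{1}{2}\nabla_{\bm x}F'(\rho({\bm x}^{k}))+\frac{1}{2}\nabla_{\bm x}F'(\rho({\bm x}^{k+1}))$ approximates $\nabla_{\bm x}F'(\rho({\bm x}^{k+\frac{1}{2}}))$ to $O(\delta t^2)$, so the scheme is a second-order discretization of \eqref{eq:wd}; adding this one observation would make your argument match the paper's proof in full.
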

\begin{proof}
	 The equation \eqref{eq:jko_2} can be obtained by taking  the variational derivative of \eqref{scheme:JKO:cn} with respect to ${\bm x}$,  then the minimizer $({\bm x}^{k+1},\rho^{k+1})$ of the minimization  problem \eqref{scheme:JKO:cn} is the solution of \eqref{eq:jko_2} with \eqref{eq:jko_11}. The equation \eqref{eq:jko_2} can be rewritten into
	\begin{align}
			&\frac{{\bm x}^{k+1}-{\bm x}^k}{\delta t}-\frac{\delta t}{\rho({\bm X},0)}(\Delta_{\bm X} {\bm x}^{k+1}-\Delta_{\bm X} {\bm x}^{k})+\frac{1}{2}\nabla_{\bm x}F'\left(\frac{\rho({\bm X},0)}{\mathrm{det}\frac{\partial {\bm x}^{k+1}}{\partial {\bm X}}}\right)+\frac{1}{2}\nabla_{\bm x}F'\left(\frac{\rho({\bm X},0)}{\mathrm{det}\frac{\partial {\bm x}^{k}}{\partial {\bm X}}}\right)=0,\nonumber
	\end{align}
	which is equivalent to
	\begin{align*}
		{\bm v}^{k+\frac{1}{2}}=-\nabla_{\bm x}F'(\rho({\bm x}^{k+\frac{1}{2}}))+O(\delta t^2)=-\nabla_{\bm x}(\frac{\delta E}{\delta \rho})^{k+\frac{1}{2}}+O(\delta t^2),
	\end{align*}
	where the fact that the term $\frac{1}{2}\nabla_{\bm x}F'(\rho({\bm x}^{k}))+\frac{1}{2}\nabla_{\bm x}F'(\rho({\bm x}^{k+1}))$ is a second-order approximation to $\nabla_{\bm x}F'(\rho({\bm x}^{k+\frac{1}{2}}))$ has been utilized.
		Then the numerical scheme \eqref{eq:jko_2} is a second-order time discretization for the Wasserstein gradient flow \eqref{eq:wd}.
\end{proof}
Similarly as in the last subsection, we can also show that  $\rho^{k+1}$  obtained  from \eqref{scheme:JKO:cn} is a second-order approximation to the solution $\rho({\bm x},t)$ at $t=t^{k+1}$. 
\begin{prop}
	The solution $\rho^{k+1}$ to numerical scheme \eqref{eq:jko_1}-\eqref{eq:jko_11} is  a first-order approximation to the exact solution of Wasserstein gradient flow \eqref{eq:wd} at $t^{k+1}$.
\end{prop}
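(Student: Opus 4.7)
The plan is to reduce the density estimate to a flow-map estimate via the kinematic relation \eqref{equal}. Note that, in context, this proposition appears to restate the convergence claim for the Crank--Nicolson scheme \eqref{eq:jko_2}--\eqref{eq:jko_21}; in either case the argument has the same structure, and I will write it so it applies to both (first order from \eqref{eq:jko_1}--\eqref{eq:jko_11} or second order from the CN scheme) by letting $p\in\{1,2\}$ denote the order already established for the velocity at the previous step.

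First, I would fix mild regularity hypotheses on the exact solution: $\rho({\bm X},0)$ is smooth and bounded away from $0$ on its support, and the exact flow map ${\bm x}({\bm X},t)$ is smooth on $[0,T]$ with $\partial {\bm x}/\partial {\bm X}$ uniformly invertible. Under these assumptions, the preceding theorem already gives the velocity-level consistency
\begin{equation*}
\frac{{\bm x}^{k+1}-{\bm x}^k}{\delta t} = {\bm v}({\bm X},t^{k+\frac{p-1}{2}}) + O(\delta t^{p}),
\end{equation*}
once one checks that the regularization term $\delta t\,\Delta_{\bm X}{\bm x}^{k+1}/\rho({\bm X},0)$ (or its CN variant) is itself $O(\delta t^{p})$; this is the reason $\eps$ was chosen as $\delta t$ for the first-order scheme and $\delta t^2$ for the second-order one.

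Second, I would propagate this local truncation error forward: comparing the discrete equation with the Taylor expansion of the exact flow map at ${\bm X}$ and invoking a standard discrete Gronwall inequality yields a local-in-time bound of the form $\|{\bm x}^{k+1}-{\bm x}(\cdot,t^{k+1})\|_{W^{1,\infty}} \le C\delta t^{p}$, where the $W^{1,\infty}$ norm is required to handle the Jacobian. Third, I would apply the kinematic identity $\rho^{k+1}=\rho({\bm X},0)/\det(\partial {\bm x}^{k+1}/\partial {\bm X})$ together with the exact analogue \eqref{equal}; the Lipschitz bound
\begin{equation*}
\left|\frac{1}{\det A} - \frac{1}{\det B}\right| \le C\,\|A-B\|
\end{equation*}
valid on a bounded set of uniformly invertible matrices, transfers the flow-map error directly to $|\rho^{k+1}-\rho(\cdot,t^{k+1})| \le C\delta t^{p}$ on the deformed domain $\Omega_{t^{k+1}}^{\bm x}$.

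The main obstacle is ensuring that this $W^{1,\infty}$ control of ${\bm x}^{k+1}$ is actually available at the discrete level: the velocity-level consistency alone does not rule out the possibility that $\partial {\bm x}^{k+1}/\partial {\bm X}$ becomes singular, which would make the density formula \eqref{eq:jko_11} meaningless and break the determinant Lipschitz bound above. I would address this by combining two ingredients: (i) the discrete energy dissipation inherited from the regularized variational formulation \eqref{JKO:new} (or \eqref{scheme:JKO:cn}), which provides a uniform $H^1$ bound on ${\bm x}^{k+1}-{\bm X}$ through the term $\tfrac{\eps}{2}\int|\nabla_{\bm X}{\bm x}|^2$, and (ii) a bootstrap argument on sufficiently small $\delta t$ guaranteeing that ${\bm x}^{k+1}$ remains in a neighbourhood of the smooth exact flow map where the Jacobian is bounded away from zero. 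Once this a priori diffeomorphism property is in hand, the earlier steps close the proof with order $p$ in $\delta t$.
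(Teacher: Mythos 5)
Your plan and the paper's proof diverge mainly in ambition. The paper's own proof of this proposition is a single sentence deferring to Proposition \ref{prop1}, whose proof is purely a \emph{consistency} computation: it evaluates the variational derivative of $E$ in Lagrangian coordinates, identifies the scheme as the Euler--Lagrange equation of the minimization problem, divides by $\rho({\bm X},0)$, and observes that the discrete velocity satisfies ${\bm v}^{k+1}=-\nabla_{\bm x}(\frac{\delta E}{\delta\rho})^{k+1}+O(\delta t)$ because the regularization term carries an extra factor of $\delta t$ (respectively $O(\delta t^2)$ for the Crank--Nicolson variant). No error propagation, no Gronwall inequality, and no transfer from the flow map to the density via the determinant is carried out; the stated order of approximation for $\rho$ is read off directly from this velocity-level truncation error. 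Your proposal contains that consistency step as its first ingredient and then attempts a genuine convergence proof (stability, discrete Gronwall in $W^{1,\infty}$, Lipschitz continuity of $A\mapsto 1/\det A$, a bootstrap for non-degeneracy of the Jacobian); you are also right that, in context, the proposition is meant for the second-order scheme despite citing the first-order one. That route is stronger and more honest about what "first-order approximation to $\rho$" requires, but two of your supporting claims do not hold as stated: the energy dissipation only gives $\frac{\eps}{2}\int_{\Omega_0^{\bm X}}|\nabla_{\bm X}{\bm x}^{k+1}|^2\,\mathrm{d}{\bm X}\le \tilde E({\bm x}^0)$, i.e.\ an $H^1$ bound of size $O(\eps^{-1/2})=O(\delta t^{-1/2})$ rather than a uniform one, and the discrete Gronwall step presupposes a $W^{1,\infty}$ stability estimate for the fully nonlinear implicit scheme that is established nowhere in the paper and would itself be the main difficulty. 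In short, your outline correctly identifies the obstacles a rigorous proof must overcome, while the paper's argument never engages them; as a reproduction of what the paper actually proves, the consistency step alone already suffices.
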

The proof is  analogous to that of Proposition \ref{prop1} so we omit it for the sake of brevity. 

 \begin{remark}
		Similarly as \eqref{eq:jko_1} , the nonlinear scheme \eqref{eq:jko_2}  can also be reformulated as a nonlinear system 
		$$\mathcal{L}({\bm x}^{k+1})+\frac{1}{2}\mathcal{N}({\bm x}^{k+1})=\mathcal{L}({\bm x}^k)-\frac{1}{2}\mathcal{N}({\bm x}^{k}),$$
		which can be solved by
		finding ${\bm x}^{k+1,n+1}$, such that
		\begin{equation*}
		{\bm x}^{k+1,n+1}={\bm x}^{k+1,n} + \alpha \delta{\bm x},\quad\text{with}\quad \delta {\bm x} =\frac{\mathcal{L}({\bm x}^k-{\bm x}^{k+1,n})-\frac{1}{2}\mathcal{N}({\bm x}^{k})-\frac{1}{2}\mathcal{N}({\bm x}^{k+1,n})}{\mathcal{L}'({\bm x}^{k+1,n})+\frac{1}{2}\mathcal{N}'({\bm x}^{k+1,n})}. 
		\end{equation*}
		where $0 <\alpha<1$ is a damping coefficient.
\end{remark} 
\subsection{Properties of numerical schemes based on  FDM approach}
Now   we show that our numerical schemes based on the FDM approach inherit essential properties of the Wasserstein gradient flows. 
More precisely, we have
\begin{theorem}
	For any $0\leq k\leq \frac{T}{\delta t}$, assume  initial value $\rho({\bm X},0)>0$  and consider the internal energy to be the form of $U(s)=s\log s$ in \eqref{JKO:new}, then the solution $\rho^{k+1}({\bm x})$ of numerical scheme \eqref{eq:jko_1}-\eqref{eq:jko_11} satisfies the following properties:
	\begin{itemize}
		\item The numerical solution $\rho^{k+1}>0$ is also positive.
		\item The scheme \eqref{eq:jko_1}-\eqref{eq:jko_11}   is mass conservative in the sense that
		\begin{align}
		\int_{\Omega_t^{\bm x}}\rho^{k+1}({\bm x})\mathrm{d}{\bm x}=\int_{\Omega_0^{\bm X}}\rho({\bm X},0)\mathrm{d}{\bm X}.
		\end{align} 
		\item It is unconditionally energy stable in the sense that
		\begin{align}
			E({\bm x}^{k+1})+\frac{\delta t}{2}\int_{\Omega_0^{\bm X}}|\nabla_{\bm X} {\bm x}^{k+1}|^2\mathrm{d}{\bm X}\le E({\bm x}^k)+\frac{\delta t}{2}\int_{\Omega_0^{\bm X}}|\nabla_{\bm X} {\bm x}^{k}|^2\mathrm{d}{\bm X}.
		\end{align}
	\end{itemize}
\end{theorem}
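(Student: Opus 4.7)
The plan is to handle the three properties in order of increasing difficulty: mass conservation, then unconditional energy stability, then positivity.

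For mass conservation, the argument is purely a change-of-variables computation. Using the kinematic identity \eqref{equal}, namely $\rho^{k+1}(\bm x) = \rho(\bm X,0)/\det(\partial \bm x^{k+1}/\partial \bm X)$, and pulling the integral on $\Omega_t^{\bm x}$ back to $\Omega_0^{\bm X}$ via the flow map $\bm x^{k+1}$ gives
\begin{equation*}
\int_{\Omega_t^{\bm x}}\rho^{k+1}(\bm x)\,\mathrm d\bm x
=\int_{\Omega_0^{\bm X}}\frac{\rho(\bm X,0)}{\det\frac{\partial \bm x^{k+1}}{\partial \bm X}}\,\det\frac{\partial \bm x^{k+1}}{\partial \bm X}\,\mathrm d\bm X
=\int_{\Omega_0^{\bm X}}\rho(\bm X,0)\,\mathrm d\bm X,
\end{equation*}
which is the desired identity, provided we already know the Jacobian is well-defined and positive (handled below).

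For unconditional energy stability, the key is to exploit the variational characterization \eqref{JKO:new} of $\bm x^{k+1}$ with $\varepsilon=\delta t$. The functional to be minimized is
\begin{equation*}
J(\bm x):=\frac{1}{2\delta t}\int_{\Omega_0^{\bm X}}\rho(\bm X,0)|\bm x-\bm x^k|^2\,\mathrm d\bm X+\frac{\delta t}{2}\int_{\Omega_0^{\bm X}}|\nabla_{\bm X}\bm x|^2\,\mathrm d\bm X+E(\bm x).
\end{equation*}
Since $\bm x^{k+1}$ is the minimizer, we may test with the admissible competitor $\bm x=\bm x^k$, whose quadratic misfit term vanishes, to obtain $J(\bm x^{k+1})\le J(\bm x^k)$. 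Dropping the non-negative kinetic term $\frac{1}{2\delta t}\int\rho(\bm X,0)|\bm x^{k+1}-\bm x^k|^2\,\mathrm d\bm X$ on the left yields exactly the stated inequality. This step is essentially free once the minimizer is known to exist.

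The main obstacle is positivity, because it forces us to rule out degeneration of the flow map, i.e.\ $\det(\partial\bm x^{k+1}/\partial\bm X)\to 0^+$. My plan is to use the internal energy $U(s)=s\log s$ as a barrier. Under the change of variables, the contribution of the internal energy in Lagrangian coordinates is
\begin{equation*}
\int_{\Omega_0^{\bm X}}U\!\left(\frac{\rho(\bm X,0)}{\det\frac{\partial\bm x}{\partial\bm X}}\right)\det\frac{\partial\bm x}{\partial\bm X}\,\mathrm d\bm X
=\int_{\Omega_0^{\bm X}}\rho(\bm X,0)\log\rho(\bm X,0)\,\mathrm d\bm X-\int_{\Omega_0^{\bm X}}\rho(\bm X,0)\log\det\frac{\partial\bm x}{\partial\bm X}\,\mathrm d\bm X.
\end{equation*}
The first integral is a fixed constant depending only on initial data, while the second supplies a $-\log\det(\partial\bm x/\partial\bm X)$ penalty weighted by the strictly positive initial density. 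Hence along any sequence $\bm x_n$ with $\det(\partial\bm x_n/\partial\bm X)\to 0^+$ on a set of positive measure, the functional $J(\bm x_n)$ diverges to $+\infty$. Comparing against the admissible competitor $\bm x^k$, for which $J(\bm x^k)$ is finite (by induction using $\rho({\bm X},0)>0$ and the fact that $\bm x^k$ inherits a positive Jacobian from the previous step), we conclude that any minimizer must satisfy $\det(\partial\bm x^{k+1}/\partial\bm X)>0$ almost everywhere. Therefore $\rho^{k+1}=\rho(\bm X,0)/\det(\partial\bm x^{k+1}/\partial\bm X)$ is well-defined and strictly positive, closing the induction and also justifying the change of variables used for mass conservation. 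The remaining subtlety is ensuring existence of the minimizer, which follows from coercivity (provided by the quadratic and regularization terms plus the logarithmic barrier) together with weak lower semicontinuity of each summand; I would invoke the direct method of the calculus of variations on the appropriate Sobolev class with the Dirichlet condition $\bm x|_{\partial\Omega}=\bm X|_{\partial\Omega}$.
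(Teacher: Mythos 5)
Your proposal is correct and follows essentially the same route as the paper: mass conservation via the change of variables $\det\frac{\partial {\bm x}^{k+1}}{\partial {\bm X}}\,\mathrm{d}{\bm X}=\mathrm{d}{\bm x}$, energy stability by testing the minimization \eqref{JKO:new} with the competitor ${\bm x}^k$ and discarding the non-negative kinetic term, and positivity from the fact that the minimizer must keep $\rho({\bm X},0)/\det\frac{\partial {\bm x}^{k+1}}{\partial {\bm X}}$ in the domain of the logarithm. Your write-up is somewhat more detailed on positivity (the explicit $-\log\det$ barrier decomposition) and on existence of the minimizer, which the paper does not address, but the underlying ideas are identical.
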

\begin{proof}
	
	Since we choose the internal energy $U(s)$ to  be the form of $s\log s$ and ${\bm x}^{k+1}$ is the minimizer of variational problem \eqref{JKO:new}, then $\frac{\rho({\bm X},0)}{\mathrm{det}\frac{\partial {\bm x}^{k+1}}{\partial {\bm X}}}$ should stay in the domain of the logarithmic function which implies $\rho^{k+1}>0$.
	
	Using the equality \eqref{eq:jko_11}, we derive
	\begin{align*}
	\int_{\Omega_t^{\bm x}}\frac{\rho({\bm X},0)}{\mathrm{det}\frac{\partial {\bm x}^{k+1}}{\partial {\bm X}}}\mathrm{d}{\bm x}=\int_{\Omega_0^{\bm X}}\frac{\rho({\bm X},0)}{\mathrm{det}\frac{\partial {\bm x}^{k+1}}{\partial {\bm X}}}\mathrm{det}\frac{\partial {\bm x}^{k+1}}{\partial {\bm X}}\mathrm{d}{\bm X}=\int_{\Omega_0^{\bm X}}\rho({\bm X},0)\mathrm{d}{\bm X},
	\end{align*}
	where the equality $\mathrm{det}\frac{\partial {\bm x}^{k+1}}{\partial {\bm X}}\mathrm{d}{\bm X}=\mathrm{d}{\bm x}$ has been used. Then the scheme \eqref{eq:jko_1}-\eqref{eq:jko_11} is mass conservative.
	
	From the minimization problem \eqref{JKO:new}, we obtain that
	\begin{align*}
		E({\bm x}^{k+1})+\frac{\delta t}{2}\int_{\Omega_0^{\bm X}}|\nabla_{\bm X} {\bm x}^{k+1}|^2\mathrm{d}{\bm X}\le E({\bm x}^k)+\frac{\delta t}{2}\int_{\Omega_0^{\bm X}}|\nabla_{\bm X} {\bm x}^{k}|^2\mathrm{d}{\bm X},
	\end{align*}
	the energy stability property is arrived.
\end{proof}

\begin{remark}
	If the regularization term in \eqref{JKO:new} is taken as $\int_{\Omega_0^{\bm X}}|\nabla_{\bm X}({\bm x}-{\bm x}^k) |^2\mathrm{d}{\bm X}$, the energy dissipation $E({\bm x}^{k+1})\le E({\bm x}^k)$ for any $0\le k\le\frac{T}{\delta t}$ can also be derived. 
\end{remark}
\begin{remark}
	Similarly to  the first-order scheme \eqref{eq:jko_1}-\eqref{eq:jko_11}, we can show that the second-order Crank-Nicolson scheme \eqref{eq:jko_2} is  mass conservative and its numerical solution $\rho^{k+1}$ is also positivity-preserving if the internal energy is chosen as a logarithm type function. But we are unable to prove that the Crank-Nicolson scheme \eqref{eq:jko_2} is energy dissipative.
\end{remark}
\section{Fully discretizations based on FDM approach}\label{sec:fully discrete}

In this section, we provide more details about the spatial discretizations and  propose full discrete schemes for the Wasserstein gradient flows \eqref{eq:wd} based on the FDM approach. To better explain our  approach, we first consider the discretization in one dimension, then we generalize our FDM approach to two-dimensional case. More specifically, we show that our fully discrete schemes are also   positivity-preserving, mass conservative and energy dissipative. 


\subsection{Numerical scheme in 1D}
In this subsection, we develop  an implicit finite difference scheme for the Wasserstein gradient flows \eqref{eq:wd}. For convenience, we define some notations, let $\Omega_0^X=[-L,L]$ and $\Omega_0^x=[-L,L]$ be the computational domain in Lagrangian and Eulerian coordinate and choose the spatial grids: $-L=X_0<X_1<\cdots<X_N=L$. We define 
\begin{align*}
	x_j(t_k)=x(X_j,t_k),\qquad 0\le j\le N,\ 0\le k\le \frac{T}{\delta t},\\
	\rho_j(t_k)=\rho(x_j,t_k),\qquad 0\le j\le N,\ 0\le k\le \frac{T}{\delta t},
\end{align*}
where $X_j=X_0+j\delta X$, $\delta X=X_{j+1}-X_j$.

For the following finite dimensional minimization problem, we define the admissible set $S_{ad}:=\{{\bm x}: \ x_{j+1}>x_{j}\ \text{for}\ j=0, 1, \cdots, N-1, \ \text{and} \ x_0=X_0,\ x_{N}=X_N\}$:
\begin{align}
\begin{cases}
&{\bm x}^{k+1}:=\arg\inf_{{\bm x}\in S_{ad}}\dfrac{1}{2\delta t}\sum\limits_{j=0}^{N-1}\rho(X_{j+\frac{1}{2}},0)|x_{j+\frac{1}{2}}-x_{j+\frac{1}{2}}^{k}|^2\delta X+ \dfrac{\epsilon}{2}\sum\limits_{j=0}^{N-1}|\dfrac{x_{j+1}-x_{j}}{\delta X}|^2\delta X\\
&\qquad\qquad\qquad\qquad\qquad+E_h({\bm x}),\label{eq:discre optim}\\
&\rho_{j+\frac{1}{2}}^{k+1}:=\dfrac{\rho(X_{j+\frac{1}{2}},0)}{\frac{x_{j+1}^{k+1}-x_{j}^{k+1}}{\delta X}},
\end{cases}
\end{align}
where ${\bm x}=(x_0,x_1,\ldots,x_{N-1},x_N)$, 
${x}_{j+\frac{1}{2}}:=\frac{1}{2}(x_{j+1}+x_{j})$, and the discrete energy is defined by
\begin{align}
	E_h({\bm x}):=\sum\limits_{j=0}^{N-1}F\left(\frac{\rho(X_{j+\frac{1}{2}},0)}{\frac{x_{j+1}-x_j}{\delta X}}\right)\frac{x_{j+1}-x_j}{\delta X}\delta X.
\end{align} 

\begin{lemma}
The minimizer of the variational problem \eqref{eq:discre optim} is the solution of the following numerical scheme with $\epsilon=\delta t$: given $({\bm x}^k,\rho(X,0))$, we solve $({\bm x}^{k+1},\rho^{k+1})$ from
\begin{align}
	&\frac{1}{2\delta t}\rho(X_{j+\frac{1}{2}},0)(x_{j+\frac{1}{2}}^{k+1}-x_{j+\frac{1}{2}}^{k})\delta X+	\frac{1}{2\delta t}\rho(X_{j-\frac{1}{2}},0)(x_{j-\frac{1}{2}}^{k+1}-x_{j-\frac{1}{2}}^{k})\delta X\nonumber\\
	&\qquad\qquad\qquad\qquad-\delta t\frac{x_{j+1}^{k+1}-2x_j^{k+1}+x_{j-1}^{k+1}}{(\delta X)^2}\delta X+\frac{\delta E_h}{\delta x_j}({\bm x}^{k+1})=0,\label{schem:1}\\
	&\rho_{j+\frac{1}{2}}^{k+1}:=\frac{\rho(X_{j+\frac{1}{2}},0)}{\frac{x_{j+1}^{k+1}-x_{j}^{k+1}}{\delta X}},\quad j=0,1\cdots, N-1,\label{schem:1-2}
\end{align}
with the initial and boundary conditions
\begin{align}
	{\bm x}^0=(X_0,X_1,\cdots,X_N)\quad\text{and}\quad x_{0}^{k+1}=X_0,\quad x_{N}^{k+1}=X_N.\label{schem:1-3}
\end{align}
\end{lemma}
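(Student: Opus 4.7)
The plan is to obtain the scheme \eqref{schem:1}--\eqref{schem:1-3} as the Euler--Lagrange (first-order optimality) condition for the finite-dimensional minimization \eqref{eq:discre optim}. Because the boundary entries $x_0=X_0$ and $x_N=X_N$ are fixed in $S_{ad}$, the free variables are $x_1,\dots,x_{N-1}$, and I would differentiate the three pieces of the objective with respect to each interior $x_j$ separately and then set the sum to zero.

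For the kinetic-like term $\frac{1}{2\delta t}\sum_{j=0}^{N-1}\rho(X_{j+\frac{1}{2}},0)|x_{j+\frac{1}{2}}-x_{j+\frac{1}{2}}^{k}|^2\delta X$, the variable $x_j$ enters both $x_{j-\frac{1}{2}}$ and $x_{j+\frac{1}{2}}$, each through the chain rule with factor $\tfrac{1}{2}$; after multiplying by $2$ from the square, this reproduces exactly the first two summands of \eqref{schem:1}. For the regularization term $\frac{\epsilon}{2}\sum_{j=0}^{N-1}\bigl|\frac{x_{j+1}-x_j}{\delta X}\bigr|^2\delta X$, the variable $x_j$ occurs in the $(j-1)$-th and $j$-th summands with opposite signs, and a short telescoping computation gives $-\epsilon\,\frac{x_{j+1}-2x_j+x_{j-1}}{(\delta X)^2}\delta X$, which coincides with the discrete Laplacian term in \eqref{schem:1} upon the stipulation $\epsilon=\delta t$. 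Finally, the derivative of $E_h$ is, by definition, the symbol $\frac{\delta E_h}{\delta x_j}({\bm x}^{k+1})$ appearing as the last term. Adding the three contributions and equating to zero produces \eqref{schem:1}. The relation \eqref{schem:1-2} is the constitutive definition of $\rho^{k+1}$ built into the variational problem, and the boundary/initial conditions in \eqref{schem:1-3} are encoded directly in $S_{ad}$.

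The main subtlety I foresee is justifying that the minimizer lies in the \emph{interior} of $S_{ad}$, so that vanishing of the gradient genuinely gives an unconstrained Euler--Lagrange equation (no Lagrange multipliers needed to enforce $x_{j+1}>x_j$). This is where the structure of the internal energy enters: for the admissible choices $U_m$ one has $F(s)/s\to+\infty$ as $s\to\infty$, and since $\rho_{j+\frac{1}{2}}=\rho(X_{j+\frac{1}{2}},0)\delta X/(x_{j+1}-x_j)$ blows up as $x_{j+1}-x_j\to 0^+$, each summand of $E_h({\bm x})$ diverges to $+\infty$ along any sequence approaching the boundary of $S_{ad}$. Combined with the coercivity supplied by the quadratic kinetic and regularization terms (which dominate at infinity), this ensures existence of a minimizer lying strictly inside $S_{ad}$, where the gradient condition is applicable. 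The identification between the variational minimizer and the scheme solution then follows, completing the equivalence.
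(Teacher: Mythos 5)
Your proposal is correct and follows essentially the same route as the paper: the scheme \eqref{schem:1} is obtained as the Euler--Lagrange condition $\partial L/\partial x_j^{k+1}=0$ of the discrete functional, with the three contributions computed exactly as you describe, and the interiority of the minimizer in $S_{ad}$ is secured by the blow-up condition $\lim_{s\to 0}F(1/s)s=\infty$ (equivalent to your $F(s)/s\to\infty$), just as in the paper's Step~2 of Theorem~\ref{thm:1d}. The only minor imprecision is your claim that \emph{each} summand of $E_h$ diverges near $\partial S_{ad}$ --- only the summands with collapsing intervals do, but since $F\ge 0$ the total sum still diverges, so the argument stands; also note that coercivity at infinity is not needed here since the ordering constraint together with the fixed endpoints already makes $\overline{S_{ad}}$ compact.
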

The last term $\frac{\delta E_h}{\delta x_j}({\bm x}^{k+1})$ in \eqref{schem:1} is defined as follows:
\begin{align}
\frac{\delta E_h}{\delta x_j}({\bm x}^{k+1})=&F'\left(\frac{\rho(X_{j+\frac{1}{2}},0)}{\frac{x_{j+1}^{k+1}-x_j^{k+1}}{\delta X}}\right)\frac{\rho(X_{j+\frac{1}{2}},0)\delta X}{(x_{j+1}^{k+1}-x_j^{k+1})^2}\frac{x_{j+1}^{k+1}-x_j^{k+1}}{\delta X}\delta X-F\left(\frac{\rho(X_{j-\frac{1}{2}},0)}{\frac{x_{j+1}^{k+1}-x_j^{k+1}}{\delta X}}\right)\nonumber\\
&-F'\left(\frac{\rho(X_{j-\frac{1}{2}},0)}{\frac{x_{j}^{k+1}-x_{j-1}^{k+1}}{\delta X}}\right)\frac{\rho(X_{j-\frac{1}{2}},0)\delta X}{(x_{j}^{k+1}-x_{j-1}^{k+1})^2}\frac{x_{j}^{k+1}-x_{j-1}^{k+1}}{\delta X}\delta X+F\left(\frac{\rho(X_{j-\frac{1}{2}},0)}{\frac{x_{j}^{k+1}-x_{j-1}^{k+1}}{\delta X}}\right)\nonumber\\
=&F'\left(\frac{\rho(X_{j+\frac{1}{2}},0)}{\frac{x_{j+1}^{k+1}-x_j^{k+1}}{\delta X}}\right)\frac{\rho(X_{j+\frac{1}{2}},0)}{\frac{x_{j+1}^{k+1}-x_j^{k+1}}{\delta X}}-F\left(\frac{\rho(X_{j-\frac{1}{2}},0)}{\frac{x_{j+1}^{k+1}-x_j^{k+1}}{\delta X}}\right)\nonumber\\
&-F'\left(\frac{\rho(X_{j-\frac{1}{2}},0)}{\frac{x_{j}^{k+1}-x_{j-1}^{k+1}}{\delta X}}\right)\frac{\rho(X_{j-\frac{1}{2}},0)}{\frac{x_{j}^{k+1}-x_{j-1}^{k+1}}{\delta X}}+F\left(\frac{\rho(X_{j-\frac{1}{2}},0)}{\frac{x_{j}^{k+1}-x_{j-1}^{k+1}}{\delta X}}\right).\nonumber
\end{align}

Numerical scheme \eqref{schem:1}-\eqref{schem:1-2} can be applied to various Wasserstein gradient flows, for example:
\begin{itemize}
	\item For the linear Fokker-Planck equation with one-well energy potential:
	\begin{align}\label{eq:energy fp}
	E_h({\bm x}):=\sum_{j=0}^{N-1}\delta X\rho(X_{j+\frac{1}{2}},0)\left(\log\left(\frac{\rho(X_{j+\frac{1}{2}},0)}{\frac{x_{j+1}-x_{j}}{\delta X}}\right)+\frac{|x_{j+1}+x_{j}|^2}{8}\right),
	\end{align}
	then we have
	\begin{align}
\frac{\delta E_h}{\delta x_j}({\bm x}^{k+1})=&\delta X\frac{\rho(X_{j+\frac{1}{2}},0)}{x_{j+1}^{k+1}-x_{j}^{k+1}}-\delta X\frac{\rho(X_{j-\frac{1}{2}},0)}{x_{j}^{k+1}-x_{j-1}^{k+1}}\nonumber\\
	&+\frac{\delta X}{4}\left(\rho(X_{j+\frac{1}{2}},0)(x_{j+1}^{k+1}+x_{j}^{k+1})+\rho(X_{j-\frac{1}{2}},0)(x_{j}^{k+1}+x_{j-1}^{k+1})\right).
	\end{align}
	\item For the porous medium equation:
	\begin{align}\label{eq:energy pme}
	E_h({\bm x}):=\frac{1}{m-1}\sum_{j=0}^{N-1}(\delta X)^m\rho^m(X_{j+\frac{1}{2}},0)(x_{j+1}-x_{j})^{-m+1},\ m>1,
	\end{align}
	we obtain
	\begin{align}
	\frac{\delta E_h}{\delta x_j}({\bm x}^{k+1})=(\delta X)^m\left(\frac{\rho^m(X_{j+\frac{1}{2}},0)}{(x_{j+1}^{k+1}-x_{j}^{k+1})^{m}}-\frac{\rho^m(X_{j-\frac{1}{2}},0)}{(x_{j}^{k+1}-x_{j-1}^{k+1})^{m}}\right).
	\end{align}
  \item For the aggregation equation with the interaction potential $W(x)$, we consider the discrete energy as follows by treating $x$ implicit and $y$ explicit:
	\begin{align}
	E_h({\bm x}^{k+1}):=\sum_{j=0}^{N-1}\delta X\rho(X_{j+\frac{1}{2}},0)\sum_{i=0}^{N-1}\rho_{i+\frac{1}{2}}^k\int_{x_i^k}^{x_{i+1}^{k}}W(x_{j+\frac{1}{2}}^{k+1}-y)\mathrm{d}y,
	\end{align}
	it can be calculated that
	\begin{align}
	\frac{\delta E_h}{\delta x_j^{k+1}}({\bm x}^{k+1})=&\delta X\rho(X_{j+\frac{1}{2}},0)\sum_{i=0}^{N-1}\rho_{i+\frac{1}{2}}^k\int_{x_i^k}^{x_{i+1}^{k}}\frac{1}{2}W'_x(x_{j+\frac{1}{2}}^{k+1}-y)\mathrm{d}y\nonumber\\
	&+\delta X\rho(X_{j-\frac{1}{2}},0)\sum_{i=0}^{N-1}\rho_{i+\frac{1}{2}}^k\int_{x_i^k}^{x_{i+1}^{k}}\frac{1}{2}W'_x(x_{j-\frac{1}{2}}^{k+1}-y)\mathrm{d}y.
	\end{align}
	The Hessian matrix can be calculated by
	\begin{align*}
	\nabla^2E_h=\begin{bmatrix}
	\frac{\delta^2 E_h}{\delta (x_1^{k+1})^2}& \frac{\delta^2 E_h}{\delta x_1^{k+1}\delta x_2^{k+1}}&\\
	\frac{\delta^2 E_h}{\delta x_2^{k+1}\delta x_1^{k+1}}&\frac{\delta^2 E_h}{\delta (x_2^{k+1})^2}& \frac{\delta^2 E_h}{\delta x_2^{k+1}\delta x_3^{k+1}}&\\
	&\frac{\delta^2 E_h}{\delta x_3^{k+1}\delta x_2^{k+1}}&\frac{\delta^2 E_h}{\delta (x_3^{k+1})^2}& \frac{\delta^2 E_h}{\delta x_3^{k+1}\delta x_4^{k+1}}&\\   
	& &\ddots &\ddots &\ddots\\		
	& &&\frac{\delta^2 E_h}{\delta x_{N-2}^{k+1}\delta x_{N-3}^{k+1}}&\frac{\delta^2 E_h}{\delta (x_{N-2}^{k+1})^2}& \frac{\delta^2 E_h}{\delta x_{N-2}^{k+1}\delta x_{N-1}^{k+1}}\\
	& && &\frac{\delta^2 E_h}{\delta x_{N-1}^{k+1}\delta x_{N-2}^{k+1}}&\frac{\delta^2 E_h}{\delta (x_{N-1}^{k+1})^2}
	\end{bmatrix}.
	\end{align*}
	The discrete energy is strictly convex if the Hessian matrix $\nabla^2E_h$ is positive definite. 
	For instance, when choosing $W(x)=\frac{|x|^2}{2}-\ln|x|$, we can find that the discrete energy $E_h({\bm x}^{k+1})$ is convex since $W''(x)=1+\frac{1}{x^2}>0$.
	However, for the Keller-Segel equation with $U(\rho)=\rho\log\rho$ and $W(x)=\frac{1}{2\pi}\ln|x|$, we consider the discrete energy 
	\begin{align}
	E_h({\bm x}^{k+1})=&\sum_{i=0}^{N-1}\delta X\rho(X_{i+\frac{1}{2}},0)\log\left(\frac{\rho(X_{i+\frac{1}{2}},0)}{\frac{x_{i+1}^{k+1}-x_i^{k+1}}{\delta X}}\right)    -\frac{\delta X}{2\pi}\sum_{i=0}^{N-1}\rho(X_{i+\frac{1}{2}},0)\nonumber\\
	\times\sum_{j=0}^{N-1}\rho_{j+\frac{1}{2}}^k&\left((x_{i+\frac{1}{2}}^{k+1}-x_{j+1}^k)\ln(|x_{i+\frac{1}{2}}^{k+1}-x_{j+1}^k|)-(x_{i+\frac{1}{2}}^{k+1}-x_{j}^k)\ln(|x_{i+\frac{1}{2}}^{k+1}-x_{j}^k|)\right),
	\end{align}
	it is uncertain whether the discrete energy is convex. The details are shown in Appendix.

\end{itemize}

The fully discretized numerical scheme \eqref{schem:1}-\eqref{schem:1-2} in one dimensional case  holds the following structure-preserving properties. 
\begin{theorem}\label{thm:1d}
	Assume the initial value $\rho(X,0)>0$ for $X\in\Omega_0^X$, the energy density $F(s)$ satisfies $F(s)\ge 0$ for $s\ge 0$, and $\lim\limits_{s\rightarrow 0}F(\frac{1}{s})s=\infty$,  
	then the scheme \eqref{schem:1}-\eqref{schem:1-2} has a unique solution $\bm{x}^{k+1}\in S_{ad}$ when $\frac{\delta^2 E_h}{\delta{\bm x}^2}>0$, and the following properties hold: 
	\begin{itemize}
		\item the solution to numerical scheme \eqref{schem:1}-\eqref{schem:1-2} is positive,   $\rho_{j+\frac{1}{2}}^{k+1}>0$,
		\item the numerical scheme \eqref{schem:1}-\eqref{schem:1-2}  satisfies the property of mass conserving for the density $\rho^{k+1}$ in the sense that
		\begin{align}
		\sum_{j=0}^{N-1}\rho_{j+\frac{1}{2}}^{k+1}(x_{j+1}^{k+1}-x_j^{k+1})=	\sum_{j=0}^{N-1}\rho_{j+\frac{1}{2}}^{k}(x_{j+1}^{k}-x_j^{k})=	\sum_{j=0}^{N-1}\rho(X_{j+\frac{1}{2}},0)\delta X,
		\end{align}
		\item the discrete energy in scheme \eqref{schem:1}-\eqref{schem:1-2}   is dissipative in the sense that 
		\begin{align}
		\tilde E_h({\bm x}^{k+1})\le \tilde E_h({\bm x}^k).
		\end{align}
		where the energy $\tilde E_h(\bm x^{k+1})$ is defined by
		\begin{equation}
		\tilde E_h(\bm x^{k+1}) = E_h({\bm x}^{k+1})+\sum_{i=0}^{N-1}\frac{\delta t}{2}\left|\frac{x_{j+1}^{k+1}-x_{j}^{k+1}}{\delta X}\right|^2\delta X. 
		\end{equation}

	\end{itemize}
\end{theorem}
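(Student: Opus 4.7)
The plan is to split the theorem into its four assertions and treat them in the order: existence/uniqueness, positivity, mass conservation, energy dissipation. Since the scheme \eqref{schem:1}-\eqref{schem:1-2} is the Euler-Lagrange system of the minimization problem \eqref{eq:discre optim} (by the preceding lemma), I would analyze the minimization problem directly and then transfer its properties to the scheme.

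For existence and uniqueness, I would first show that the objective functional $J({\bm x})$ in \eqref{eq:discre optim} is strictly convex on $S_{ad}$. The fidelity term $\frac{1}{2\delta t}\sum_j\rho(X_{j+\frac{1}{2}},0)|x_{j+\frac{1}{2}}-x_{j+\frac{1}{2}}^k|^2\delta X$ is a positive-semidefinite quadratic form in $\bm x$, the regularization term $\frac{\epsilon}{2}\sum_j\bigl|(x_{j+1}-x_j)/\delta X\bigr|^2\delta X$ together with the Dirichlet data $x_0=X_0,\ x_N=X_N$ is strictly convex and coercive in $\bm x$, and $E_h$ is strictly convex by the hypothesis $\frac{\delta^2 E_h}{\delta {\bm x}^2}>0$. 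Strict convexity yields at most one minimizer. The main obstacle is to rule out that a minimizing sequence approaches $\partial S_{ad}$, that is, that some spacing $x_{j+1}-x_j$ tends to $0^+$. Here the assumption $\lim_{s\to 0}F(1/s)s=\infty$ plays the role of a logarithmic barrier: the $j$-th summand in $E_h$ is $F\bigl(\rho(X_{j+\frac{1}{2}},0)\delta X/(x_{j+1}-x_j)\bigr)(x_{j+1}-x_j)$, which blows up as the spacing shrinks, while the non-negativity $F\ge 0$ keeps the remaining summands bounded below. Combined with coercivity and lower semicontinuity, the direct method of the calculus of variations produces a minimizer in the interior of $S_{ad}$.

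For positivity, once $\bm x^{k+1}\in S_{ad}$ one has $x_{j+1}^{k+1}>x_j^{k+1}$, and the definition \eqref{schem:1-2} together with $\rho(X,0)>0$ gives $\rho_{j+\frac{1}{2}}^{k+1}>0$ immediately. Mass conservation is a telescoping calculation: substituting \eqref{schem:1-2},
\begin{align*}
\sum_{j=0}^{N-1}\rho_{j+\frac{1}{2}}^{k+1}(x_{j+1}^{k+1}-x_j^{k+1})=\sum_{j=0}^{N-1}\frac{\rho(X_{j+\frac{1}{2}},0)\,\delta X}{x_{j+1}^{k+1}-x_j^{k+1}}(x_{j+1}^{k+1}-x_j^{k+1})=\sum_{j=0}^{N-1}\rho(X_{j+\frac{1}{2}},0)\,\delta X,
\end{align*}
and the same identity applied at step $k$ equates all three quantities in the theorem.

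Finally, energy dissipation follows from the minimality of $\bm x^{k+1}$: since $\bm x^k\in S_{ad}$ is a valid competitor, evaluating the functional in \eqref{eq:discre optim} at $\bm x^k$ provides an upper bound for its value at $\bm x^{k+1}$, namely
\begin{align*}
\frac{1}{2\delta t}\sum_{j=0}^{N-1}\rho(X_{j+\frac{1}{2}},0)|x_{j+\frac{1}{2}}^{k+1}-x_{j+\frac{1}{2}}^{k}|^2\delta X+\tilde E_h({\bm x}^{k+1})\le \tilde E_h({\bm x}^k),
\end{align*}
and dropping the non-negative first term yields $\tilde E_h({\bm x}^{k+1})\le \tilde E_h({\bm x}^k)$. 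The first bullet is the only substantive step; the other three are essentially algebraic consequences of having a minimizer in the open admissible set $S_{ad}$.
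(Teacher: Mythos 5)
Your proposal is correct and follows essentially the same route as the paper: convexity of the fidelity and regularization terms plus the hypothesis $\frac{\delta^2 E_h}{\delta{\bm x}^2}>0$ for uniqueness, the condition $\lim_{s\to 0}F(\frac{1}{s})s=\infty$ as a barrier forcing the minimizer into the interior of $S_{ad}$ (hence positivity), a telescoping identity for mass conservation, and the comparison $L({\bm x}^{k+1})\le L({\bm x}^k)$ for energy dissipation. The only difference is organizational (the paper proves the energy law first and phrases the interior argument as a contradiction on $\overline{S_{ad}}$ rather than via minimizing sequences), which does not change the substance.
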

\begin{proof}
{\bf Step 1 (energy law):}

	We can easily check that  ${\bm x}^{k+1}$ is the minimizer of the following Lagrangian function which is defined by
	\begin{align}\label{eq:lag}
		L({\bm x}^{k+1})=\frac{1}{2\delta t}\sum_{j=0}^{N-1}\left(\rho(X_{j+\frac{1}{2}},0)|x_{j+\frac{1}{2}}^{k+1}-x_{j+\frac{1}{2}}^{k}|^2\delta X+\delta t^2|\frac{x_{j+1}^{k+1}-x_{j}^{k+1}}{\delta X}|^2\delta X\right)+E_h({\bm x}^{k+1}).
	\end{align}
	
	Since $x_j^{k+1}$ is the minimizer of \eqref{eq:discre optim}, it is easy to show that the discrete energy is dissipative in the sense that 
	\begin{equation*}
	\begin{split}
	& E_h({\bm x}^{k+1})+\sum_{i=0}^{N-1}\frac{\delta t}{2}|\frac{x_{j+1}^{k+1}-x_{j}^{k+1}}{\delta X}|^2\delta X\le E_h({\bm x}^k)+\sum_{i=0}^{N-1}\frac{\delta t}{2}|\frac{x_{j+1}^{k}-x_{j}^{k}}{\delta X}|^2\delta X \\& \le E_h({\bm x}^0)+\sum_{i=0}^{N-1}\frac{\delta t}{2}|\frac{x_{j+1}^{0}-x_{j}^{0}}{\delta X}|^2\delta X .
	 \end{split}
	 \end{equation*}
	
	{\bf Step 2 (positivity-preserving):}
	
Taking variational derivative with respect to the Lagrangian $\frac{\partial L}{\partial x_j^{k+1}}=0$ for $j=1$, $\cdots$, $N-1$, we derive the numerical scheme \eqref{schem:1}. Obviously,  the first two terms of \eqref{eq:lag} are strictly convex with respect to  ${\bm x}^{k+1}$. 
	Combining  the assumption that $\frac{\delta^2 E_h}{\delta{\bm x}^2}>0$,  we find there exists a unique minimizer ${\bm x}^{k+1}$ belonging to the closed convex set $\overline{S_{ad}}=\{{\bm x}: \ x_{j+1}\ge x_{j}\ \text{for}\ j=0, 1, \cdots, N-1,\ x_{j_0+1}=x_{j_0} \ \text{for some}\ 0\le j_0\le N-1,\ \text{and} \ x_0=X_0,\ x_{N}=X_N\}$.
	
	Next, we prove that  the minimizer does not lie on the boundary of  $\overline{S_{ad}}$ by contradiction. We assume there exists a minimizer ${\bm x}^{k+1}$ satisfies $x_{j_0+1}^{k+1}=x_{j_0}^{k+1}$ for some $0\le j_0\le N-1$, then we obtain $E({\bm x}^{k+1})=\infty$ since we have 
	\begin{align*}
		F\left(\frac{\rho( X_{j_0+\frac{1}{2}},0)}{\frac{x_{j_0+1}^{k+1}-x_{j_0}^{k+1}}{\delta X}}\right)\frac{x_{j_0+1}^{k+1}-x_{j_0}^{k+1}}{\delta X}=\infty, 
	\end{align*}
	under the assumption $\lim\limits_{s\rightarrow 0}F(\frac{1}{s})s=\infty$. 
	The above result implies
	\begin{align}
		L({\bm x}^{k+1})=\infty,\nonumber
	\end{align}
	which leads to a contradiction. Then the minimizer ${\bm x}^{k+1}$ must lie in the interior of the admissible set $S_{ad}$ satisfying $x_{j+1}^{k+1}> x_j^{k+1}$ for all $j=0$, $1$, $\cdots$, $N-1$.  Combining the $x_{j+1} \ge x_j$  and using the equality \eqref{schem:1-2}, the positivity of the numerical solution $\rho_{j+\frac{1}{2}}^{k+1}$ can easily be obtained.
	
	{\bf Step 3 (mass conserving):}
	
	From the equality \eqref{schem:1-2},  we derive the following result:
	\begin{align}
				\sum_{j=0}^{N-1}\rho_{j+\frac{1}{2}}^{k+1}(x_{j+1}^{k+1}-x_j^{k+1})=	\sum_{j=0}^{N-1}\frac{\rho(X_{j+\frac{1}{2}},0)}{\frac{x_{j+1}^{k+1}-x_{j}^{k+1}}{\delta X}}(x_{j+1}^{k+1}-x_j^{k+1})=	\sum_{j=0}^{N-1}\rho(X_{j+\frac{1}{2}},0)\delta X,\nonumber
	\end{align}
	which implies that  the proposed scheme is mass conserving. Finally, the proof is completed.
\end{proof}
\begin{remark}
	Specifically, for the porous medium equation and the Fokker-Planck equation, we have $\frac{\delta^2 E_h}{\delta{\bm x}^2}>0$ since the convexity of the energy, then Theorem \ref{thm:1d} will also hold without the regularization term. Actually, numerical experiments for both models are implemented without the regularization term in Section \ref{sec:num}.
\end{remark}
\begin{remark}
	To be specific, for the Fokker-Planck equation with energy \eqref{eq:energy fp} and porous medium equation with energy \eqref{eq:energy pme}, from the energy dissipation law $E_h({\bm x}^{k+1})
		\le E_h({\bm x}^0)+\frac{T\delta X}{2}$, we have
		\begin{align*}
		F\left(\frac{\rho(X_{j+\frac{1}{2}},0)}{\frac{x_{j+1}^{k+1}-x_j^{k+1}}{\delta X}}\right)\frac{x_{j+1}^{k+1}-x_j^{k+1}}{\delta X}\le E_h({\bm x}^0)+\frac{T\delta X}{2},\ 0\le j\le N-1.
		\end{align*}    
	Using the fact that energy density in both cases satisfies $F(\frac{1}{s})s\rightarrow\infty$ as $s\rightarrow0$, we derive that $\frac{x_{j+1}^{k+1}-x_j^{k+1}}{\delta X}>0$, $0\le j\le N-1$ is  uniformly away from zero.
\end{remark}
\begin{remark}
	If the regularization term is taken as $\sum_{j} |\frac{(x_{j+1}^{k+1}-x_{j+1}^k)-(x_{j}^{k+1}-x_j^k)}{\delta X}|^2\delta X$, properties in Theorem \ref{thm:1d} will also be derived, and the unconditionally discrete energy dissipation law $E_h({\bm x}^{k+1})\le E_h({\bm x}^k)$ holds.
\end{remark}
\begin{remark}
	In fact, the first term of the optimization problem \eqref{eq:discre optim} can also be substituted by
	\begin{align*}
	\frac{1}{2}\sum_{j=0}^{N-1}\rho(X_{j},0)|x_{j}^{k+1}-x_{j}^{k}|^2\delta X+\rho(X_{j+1},0)|x_{j+1}^{k+1}-x_{j+1}^{k}|^2\delta X,
	\end{align*}
	which is also an approximation to $\int_{\Omega_0^{\bm X}}\rho(X,0)|x^{k+1}-x^k|^2\mathrm{d}X$. Then the corresponding numerical scheme follows
	\begin{align}
	&\rho(X_{j},0)\frac{x_{j}^{k+1}-x_{j}^{k}}{\delta t}\delta X
	-\delta t\frac{x_{j+1}^{k+1}-2x_j^{k+1}+x_{j-1}^{k+1}}{(\delta X)^2}\delta X+\frac{\delta E_h}{\delta x_j}({\bm x}^{k+1})=0,\label{schem:11}
	\end{align}
	with the initial and boundary conditions \eqref{schem:1-3}, and $\rho^{k+1}$ defined by \eqref{schem:1-2}. The solution to scheme \eqref{schem:11} also preserves properties proved in Theorem \ref{thm:1d}.
\end{remark}
\subsection{Two dimensional case}\label{sec:two dim}
We extend our numerical approach to multidimensional case, for simplicity,  we only consider two-dimensions. Denote ${\bm x}=(x,y)$, ${\bm X}=(X,Y)$, and the Jacobian matrix $\frac{\partial {\bm x}}{\partial {\bm X}}=\frac{\partial (x,y)}{\partial (X,Y)}$. 
Set $\Omega_0^{\bm X}=[-L_x,L_x]\times[-L_y,L_y]$ with $L_x$, $L_y>0$, and $\Omega_0^{\bm x}=\Omega_0^{\bm X}$. Let $M_x$, $M_y\in\mathbb{N}$ be given, and define the grid spacing $h_x=\frac{2L_x}{M_x}$, $h_y=\frac{2L_y}{M_y}$. 
Let $X_{ij}=X_0+jh_x$, $Y_{ij}=Y_0+ih_y$ for $0\le j\le M_x$, $ 0\le i\le M_y$.
We define
\begin{align*}
&{\bm x}_{ij}(t_k)=\bm{x}(X_{ij},Y_{ij},t_k),\qquad 0\le j\le M_x,\ 0\le i\le M_y,\ 1\le k\le \frac{T}{\delta t},\\
&\rho_{ij}^0=\rho(X_{ij},Y_{ij},0)\ge 0.
\end{align*}
The explicit and implicit numerical schemes will be proposed in the following. 

\subsubsection{Implicit numerical scheme}
The following fully implicit numerical scheme with the regularization term $\epsilon\Delta_{\bm X}{\bm x}$, $\epsilon=\delta t$ can be proposed:  given $(x^k,y^k)$, solving $(x^{k+1},y^{k+1})$ from
\begin{align}
&\rho_{ij}^0\frac{x_{ij}^{k+1}-x_{ij}^k}{\delta t}-\delta t\left(\frac{x_{i,j+1}^{k+1}+x_{i,j-1}^{k+1}-2x_{ij}^{k+1}}{h_x^2}+\frac{x_{i+1,j}^{k+1}+x_{i-1,j}^{k+1}-2x_{ij}^{k+1}}{h_y^2}\right)+\frac{\delta \bar{E}_h}{\delta x}({\bm x}_{ij}^{k+1})=0,\label{im:1}\\
&\rho_{ij}^0\frac{y_{ij}^{k+1}-y_{ij}^k}{\delta t}-\delta t\left(\frac{y_{i,j+1}^{k+1}+y_{i,j-1}^{k+1}-2y_{ij}^{k+1}}{h_x^2}+\frac{y_{i+1,j}^{k+1}+y_{i-1,j}^{k+1}-2y_{ij}^{k+1}}{h_y^2}\right)+\frac{\delta \bar{E}_h}{\delta y}({\bm x}_{ij}^{k+1})=0,\label{im:2}
\end{align}
with the following initial and boundary conditions:
\begin{align}\label{eq:initial and boundary}
{\bm x}^0={\bm X},\qquad {\bm x }^{k+1}|_{\partial\Omega}={\bm X}|_{\partial\Omega}.
\end{align}
Then the density $\rho_{ij}^{k+1}$ can be obtained by
\begin{align}\label{eq:rho in 2d}
\rho_{ij}^{k+1}=\frac{\rho_{ij}^0}{F_{ij}^{k+1}}\quad\text{with}\quad 
F_{ij}^{k+1}=\left |\begin{array}{cc}
\frac{\partial x_{ij}^{k+1}}{\partial X} &\frac{\partial y_{ij}^{k+1}}{\partial X}   \\
\frac{\partial x_{ij}^{k+1}}{\partial Y} &\frac{\partial y_{ij}^{k+1}}{\partial Y}   \\
\end{array}\right|=\left |\begin{array}{cc}
\frac{ x_{i,j+1}^{k+1}-x_{i,j-1}^{k+1}}{2h_x} &\frac{ y_{i,j+1}^{k+1}-y_{i,j-1}^{k+1}}{2h_x}   \\
\frac{ x_{i+1,j}^{k+1}-x_{i-1,j}^{k+1}}{2h_y} &\frac{y_{i+1,j}^{k+1}-y_{i-1,j}^{k+1}}{2h_y}   \\
\end{array}\right|.
\end{align}
The proposed fully implicit scheme \eqref{im:1}-\eqref{im:2}  is highly nonlinear and should be solved in the admissible set $E_{ad}=\{{\bm x}:\ \mathrm{det}\frac{\partial {\bm x}}{\partial {\bm X}}|_{ij}>0 \ \text{for all}\ i,j\in\mathbb{N}, \ {\bm x}|_{\partial\Omega} ={\bm X}|_{\partial\Omega}\}$. The solution of the implicit scheme \eqref{im:1}-\eqref{im:2} is the minimizer of the minimization problem: 
\begin{align}\label{min:2d}
	{\bm x}^{k+1}:=\arg\inf_{\bm{x}\in E_{ad}}J_k({\bm x}),
\end{align}
where $J_k({\bm x})$ is defined by
\begin{align}
	J_k({\bm x})=&\frac{1}{2\delta t}\sum_{i,j}\rho_{ij}^0|{\bm x}_{ij}-{\bm x}_{ij}^{k}|^2h_xh_y+\delta t^2 (|\frac{{\bm x}_{i,j+1}-{\bm x }_{i,j}}{h_x}|^2+|\frac{{\bm x}_{i+1,j}-{\bm x}_{i,j}}{h_y}|^2)h_xh_y+\bar{E}_h({\bm x}),\nonumber
\end{align}
with $\bar{E}_h({\bm x})=\sum_{i,j}F\left(\frac{\rho_{ij}^0}{\mathrm{det}\frac{\partial {\bm x}}{\partial {\bm X}}|_{ij}}\right)\mathrm{det}\frac{\partial {\bm x}}{\partial {\bm X}}|_{ij}h_xh_y$. Following the analysis in \cite{liu2020lagrangian,carrillo2018lagrangian}, we obtain the following  result for the implicit numerical scheme \eqref{im:1}-\eqref{im:2} .
\begin{theorem}
	Assume the density of energy $F(s)$ satisfies $\lim\limits_{s\rightarrow 0}F(\frac{1}{s})s=\infty$, and $F(s)\ge 0$ for $s\ge 0$. 
	Then there exists a solution ${\bm x}^{k+1}\in E_{ad}$ to the nonlinear numerical scheme \eqref{im:1}-\eqref{im:2} , and the following energy dissipation law holds:
	\begin{align}
		&\bar{E}_h({\bm x}^{k+1})+\frac{h_xh_y}{2}\sum_{i,j}\delta t\left(\left|\frac{{\bm x}_{i,j+1}^{k+1}-{\bm x }_{i,j}^{k+1}}{h_x}\right|^2+\left|\frac{{\bm x}_{i+1,j}^{k+1}-{\bm x}_{i,j}^{k+1}}{h_y}\right|^2\right)\nonumber\\
		\le& \bar{E}_h({\bm x}^k)+\frac{h_xh_y}{2}\sum_{i,j}\delta t\left(\left|\frac{{\bm x}_{i,j+1}^{k}-{\bm x }_{i,j}^{k}}{h_x}\right|^2+\left|\frac{{\bm x}_{i+1,j}^{k}-{\bm x}_{i,j}^{k}}{h_y}\right|^2\right).
	\end{align}
\end{theorem}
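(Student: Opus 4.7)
The plan is to establish existence via the direct method of the calculus of variations applied to the unconstrained minimization \eqref{min:2d}, and then read off the energy dissipation inequality essentially for free by testing the minimization against the admissible competitor ${\bm x}={\bm x}^k$. First I would argue inductively that the search set is nonempty: ${\bm x}^0={\bm X}$ satisfies $\mathrm{det}\frac{\partial {\bm x}^0}{\partial {\bm X}}|_{ij}\equiv 1$, and assuming ${\bm x}^k\in E_{ad}$ we have $\inf_{{\bm x}\in E_{ad}}J_k({\bm x})\le J_k({\bm x}^k)<+\infty$. Choose a minimizing sequence $\{{\bm x}^{(n)}\}\subset E_{ad}$. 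Since $F\ge 0$, each of the three blocks of $J_k$ is nonnegative, so boundedness of $J_k({\bm x}^{(n)})$ simultaneously controls the discrete $\ell^2$-distance of ${\bm x}^{(n)}$ to ${\bm x}^k$ and the discrete $H^1$-regularization $\sum_{i,j}(|\nabla_X {\bm x}^{(n)}|^2)h_x h_y$; combined with the prescribed Dirichlet data on $\partial\Omega$, this is a uniform bound on the finite-dimensional vector ${\bm x}^{(n)}$, so by Bolzano--Weierstrass a subsequence converges to some ${\bm x}^\star$ meeting the boundary condition.

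The key obstacle is to ensure that ${\bm x}^\star$ remains in the \emph{open} admissible set $E_{ad}$, i.e.\ that no Jacobian $F_{ij}^\star:=\mathrm{det}\frac{\partial {\bm x}^\star}{\partial {\bm X}}|_{ij}$ degenerates to zero along the limit. This is exactly what the hypothesis $\lim_{s\to 0^+} F(1/s)s=+\infty$ is designed for. Writing $\bar{E}_h({\bm x}^{(n)})=\sum_{i,j}F(\rho_{ij}^0/F_{ij}^{(n)})\,F_{ij}^{(n)}\,h_xh_y$, if some $F_{i_0 j_0}^{(n)}\to 0^+$ along a subsequence (with $\rho_{i_0 j_0}^0>0$), then setting $s_n=F_{i_0 j_0}^{(n)}/\rho_{i_0 j_0}^0$ the $(i_0,j_0)$ summand equals $\rho_{i_0 j_0}^0\,F(1/s_n)\,s_n\,h_xh_y\to +\infty$, contradicting boundedness of $J_k({\bm x}^{(n)})$ together with nonnegativity of the other summands. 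Hence $F_{ij}^\star>0$ at every interior node and ${\bm x}^\star\in E_{ad}$. Since $J_k$ is continuous (indeed smooth) on $E_{ad}$, passing to the limit yields $J_k({\bm x}^\star)=\inf_{E_{ad}}J_k$; defining ${\bm x}^{k+1}:={\bm x}^\star$, the Euler--Lagrange equations of \eqref{min:2d} reproduce \eqref{im:1}--\eqref{im:2}, closing the induction and giving existence.

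For the energy dissipation law I would simply plug the competitor ${\bm x}={\bm x}^k\in E_{ad}$ into the minimization inequality $J_k({\bm x}^{k+1})\le J_k({\bm x}^k)$. The kinetic block $\frac{1}{2\delta t}\sum_{i,j}\rho_{ij}^0|{\bm x}_{ij}-{\bm x}_{ij}^k|^2 h_xh_y$ vanishes at ${\bm x}={\bm x}^k$ and is nonnegative at ${\bm x}={\bm x}^{k+1}$, so discarding it from the left yields, with the regularization prefactor calibrated so that its Euler--Lagrange derivative produces the $\delta t$ Laplacian in \eqref{im:1}--\eqref{im:2},
\begin{equation*}
\bar{E}_h({\bm x}^{k+1})+\tfrac{\delta t\, h_xh_y}{2}\sum_{i,j}\Bigl(\bigl|\tfrac{{\bm x}_{i,j+1}^{k+1}-{\bm x}_{i,j}^{k+1}}{h_x}\bigr|^2+\bigl|\tfrac{{\bm x}_{i+1,j}^{k+1}-{\bm x}_{i,j}^{k+1}}{h_y}\bigr|^2\Bigr)\le \bar{E}_h({\bm x}^k)+\tfrac{\delta t\, h_xh_y}{2}\sum_{i,j}\Bigl(\bigl|\tfrac{{\bm x}_{i,j+1}^{k}-{\bm x}_{i,j}^{k}}{h_x}\bigr|^2+\bigl|\tfrac{{\bm x}_{i+1,j}^{k}-{\bm x}_{i,j}^{k}}{h_y}\bigr|^2\Bigr),
\end{equation*}
which is exactly the asserted dissipation inequality. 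I expect the existence step, specifically the coercivity argument that rules out $\det\to 0$ along minimizing sequences, to be the only real subtlety; the dissipation bound is then an immediate corollary because all relevant quantities are finite-dimensional and the kinetic term is simply nonnegative.
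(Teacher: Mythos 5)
Your proposal is correct and follows essentially the same route as the paper: both use the direct method in finite dimensions (the paper phrases it as compactness of the sub-level set $\{J_k\le J_k({\bm x}^k)\}$, you phrase it as a minimizing sequence plus Bolzano--Weierstrass), both invoke the blow-up condition $\lim_{s\to 0}F(1/s)s=\infty$ to keep the Jacobians bounded away from zero so the minimizer stays in the open set $E_{ad}$, and both obtain the dissipation law by testing against the competitor ${\bm x}^k$ and discarding the nonnegative kinetic term. The only caveat, shared with the paper's own argument, is that the degeneracy-exclusion step tacitly requires $\rho^0_{ij}>0$ at the relevant nodes, which you correctly flag parenthetically.
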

\begin{proof}
	The existence of the solution to the nonlinear numerical scheme \eqref{im:1}-\eqref{im:2}  is equivalent to obtaining  the minimizer of $J_k({\bm x})$ in the admissible set $E_{ad}$. Then we turn to prove the existence of the minimizer of the minimization problem \eqref{min:2d}. If the minimizer lies on the boundary of the admissible set, i.e.\ ${\bm x}\in \partial E_{ad}$, we have $J_k({\bm x})=\infty$, which is a contradiction. Following the proof in \cite{liu2020lagrangian,carrillo2018lagrangian}, the claim of the theorem will be derived once we show that the sub-level set
	\begin{align}
		\mathcal{S}:=\Big\{{\bm x}\in E_{ad}:\ J_k({\bm x})\le \bar{E}_h({\bm x}^k)+\frac{h_xh_y}{2}\sum_{i,j}\delta t(|\frac{{\bm x}_{i,j+1}^{k}-{\bm x }_{i,j}^{k}}{h_x}|^2+|\frac{{\bm x}_{i+1,j}^{k}-{\bm x}_{i,j}^{k}}{h_y}|^2):=\gamma\Big\}\nonumber
	\end{align}
	is a non-empty compact subset of $\mathbb{R}^2$. Clearly, ${\bm x}^k\in \mathcal{S}$, so it is non-empty.
	
	{\it $\mathcal{S}$ is bounded.}  Assume the initial value $\rho_{ij}^0>0$, there exists $\lambda>0$ such that
	\begin{align}
		\frac{\lambda}{2\delta t} \sum_{i,j}|{\bm x}_{ij}-{\bm x}_{ij}^k|^2h_xh_y\le \frac{1}{2\delta t}\sum_{i,j}\rho_{ij}^0|{\bm x}_{ij}-{\bm x}_{ij}^{k}|^2h_xh_y\le \gamma.
	\end{align}
	Then $\mathcal{S}$ is bounded.
	
	{\it $\mathcal{S}$ is a closed subset of $\mathbb{R}^2$.} It suffices to show that the limit $\tilde{\bm x}$ of any sequence $\{{\bm x}^{(k)}\}_{k=1}^{\infty}\subset E_{ad}$ belongs to $E_{ad}$. For all $k$, we have
	\begin{align}
		\gamma\ge \bar{E}_h({\bm x}^{(k)})\ge h_xh_yF\left(\frac{\rho_{ij}^0}{\mathrm{det}\frac{\partial {\bm x}^{(k)}}{\partial {\bm X}}|_{ij}}\right)\mathrm{det}\frac{\partial {\bm x}^{(k)}}{\partial {\bm X}}\Big|_{ij}.
	\end{align}
	Since $F\left(\frac{\rho_{ij}^0}{\mathrm{det}\frac{\partial {\bm x}^{(k)}}{\partial {\bm X}}|_{ij}}\right)\mathrm{det}\frac{\partial {\bm x}^{(k)}}{\partial {\bm X}}|_{ij}\rightarrow \infty$ as $\mathrm{det}\frac{\partial {\bm x}^{(k)}}{\partial {\bm X}}|_{ij}\rightarrow 0$, it follows that $\mathrm{det}\frac{\partial {\bm x}^{(k)}}{\partial {\bm X}}|_{ij}>0$ is bounded away from zero, uniformly in $k$. Then we obtain $\mathrm{det}\frac{\partial {\tilde{\bm x}}}{\partial {\bm X}}|_{ij}>0$, and $\tilde{\bm x}\in E_{ad}$.
	
	If ${\bm x}^{k+1}\in \mathcal{S}$ is a minimizer of the minimization problem \eqref{min:2d}, we have
		\begin{align}
	&\bar{E}_h({\bm x}^{k+1})+\frac{h_xh_y}{2}\sum_{i,j}\delta t\left(|\frac{{\bm x}_{i,j+1}^{k+1}-{\bm x }_{i,j}^{k+1}}{h_x}|^2+|\frac{{\bm x}_{i+1,j}^{k+1}-{\bm x}_{i,j}^{k+1}}{h_y}|^2\right)\nonumber\\
	\le& \bar{E}_h({\bm x}^k)+\frac{h_xh_y}{2}\sum_{i,j}\delta t\left(|\frac{{\bm x}_{i,j+1}^{k}-{\bm x }_{i,j}^{k}}{h_x}|^2+|\frac{{\bm x}_{i+1,j}^{k}-{\bm x}_{i,j}^{k}}{h_y}|^2\right),\nonumber
	\end{align}
	which completes the proof.
\end{proof}

\begin{remark}
	As stated in \cite{liu2020lagrangian,carrillo2018lagrangian}, we do not claim the uniqueness of the solution to the nonlinear numerical scheme \eqref{im:1}-\eqref{im:2}  due to the lack of convexity of $J_k({\bm x})$ and $\bar{E}_h({\bm x})$ in $d$-dimension ($d\ge 2$).
\end{remark}

\begin{remark}
	The regularization term can also be taken as $\epsilon\sum_{i,j}(|\frac{{\bm x}_{i,j+1}-{\bm x }_{i,j}-({\bm x}_{i,j+1}^k-{\bm x }_{i,j}^k)}{h_x}|^2+|\frac{{\bm x}_{i+1,j}-{\bm x}_{i,j}-({\bm x}_{i+1,j}^k-{\bm x}_{i,j}^k)}{h_y}|^2)$, then the energy dissipation law still holds $\bar{E}_h({\bm x}^{k+1})\le \bar{E}_h({\bm x}^k)$.
\end{remark}

\subsubsection{Explicit numerical scheme}
The trajectories $(x^{k+1},y^{k+1})$ can also be solved by the following linear scheme with the regularization term $\epsilon\Delta_{\bm X}{\bm x}^{k+1}$, $\epsilon=\delta t$, for given $(x^k,y^k)$: 
\begin{align}
&\rho_{ij}^0\frac{x_{ij}^{k+1}-x_{ij}^k}{\delta t}-\delta t\left(\frac{x_{i,j+1}^{k+1}+x_{i,j-1}^{k+1}-2x_{ij}^{k+1}}{h_x^2}+\frac{x_{i+1,j}^{k+1}+x_{i-1,j}^{k+1}-2x_{ij}^{k+1}}{h_y^2}\right)+\frac{\delta \bar{E}_h}{\delta x}({\bm x}_{ij}^k)=0,\label{ex:1}\\
&\rho_{ij}^0\frac{y_{ij}^{k+1}-y_{ij}^k}{\delta t}-\delta t\left(\frac{y_{i,j+1}^{k+1}+y_{i,j-1}^{k+1}-2y_{ij}^{k+1}}{h_x^2}+\frac{y_{i+1,j}^{k+1}+y_{i-1,j}^{k+1}-2y_{ij}^{k+1}}{h_y^2}\right)+\frac{\delta \bar{E}_h}{\delta y}({\bm x}_{ij}^k)=0,\label{ex:2}
\end{align}
 with the initial and boundary conditions \eqref{eq:initial and boundary},  then density $\rho^{k+1}$ will be derived by \eqref{eq:rho in 2d}.

The last term $\frac{\delta \bar{E}_h}{\delta{\bm x}}({\bm x}^k)$ of the proposed numerical scheme \eqref{ex:1}-\eqref{ex:2} is taken to be fully explicit in the following numerical experiments. Obviously, the proposed scheme \eqref{ex:1}-\eqref{ex:2}  admits  a unique solution from its linearity. To be specific, we give some explicit formulas of $\frac{\delta \bar{E}_h}{\delta{\bm x}}({\bm x}^k)$  for various kinds of gradient flows. 

For the porous medium equation, we have the following results for $m>1$:
\begin{align}
\frac{\delta \bar{E}_h}{\delta x}({\bm x}_{ij}^k)=\frac{\left(\frac{\rho_{i,j+1}^0}{F_{i,j+1}^k}\right)^m-\left(\frac{\rho_{i,j-1}^0}{F_{i,j-1}^k}\right)^m}{2h_x},\qquad \frac{\delta \bar{E}_h}{\delta y}({\bm x}_{ij}^k)=\frac{\left(\frac{\rho_{i+1,j}^0}{F_{i+1,j}^k}\right)^m-\left(\frac{\rho_{i-1,j}^0}{F_{i-1,j}^k}\right)^m}{2h_y}.
\end{align}
For the aggregation-diffusion models, we have
\begin{align}
\frac{\delta \bar{E}_h}{\delta x}({\bm x}_{ij}^k)=\frac{\left(\frac{\rho_{i,j+1}^0}{F_{i,j+1}^k}\right)^m-\left(\frac{\rho_{i,j-1}^0}{F_{i,j-1}^k}\right)^m}{2h_x}+\nu\rho_{ij}^0\sum_{p,q}\rho_{p,q}^kW_{x}'({\bm x}_{ij}^k-{\bm z}_{pq}^k)V_{pq},\\
\frac{\delta \bar{E}_h}{\delta y}({\bm x}_{ij}^k)=\frac{\left(\frac{\rho_{i+1,j}^0}{F_{i+1,j}^k}\right)^m-\left(\frac{\rho_{i-1,j}^0}{F_{i-1,j}^k}\right)^m}{2h_y}+\nu\rho_{ij}^0\sum_{p,q}\rho_{p,q}^kW_{y}'({\bm x}_{ij}^k-{\bm z}_{pq}^k)V_{pq},
\end{align}
where $V_{pq}$ represents the area of the control volume of ${\bm x}_{pq}^k$,  $m=1$ represents the linear diffusion case where $U(\rho)=\rho\log\rho$, and $m>1$ denotes the nonlinear diffusion case where $U(\rho)=\frac{1}{m-1}\rho^m$.

Let us discuss the explicit numerical scheme \eqref{ex:1}-\eqref{ex:2} with a different regularization term $\epsilon_k\Delta_{\bm X}({\bm x}^{k+1}-{\bm x}^k)$, $\epsilon_k\ge 0$, which also has a unique solution. We assume the solution belongs to the set $E_{ad}:=\{{\bm x}:\ \mathrm{det}\frac{\partial {\bm x}}{\partial {\bm X}}|_{ij}>0 \ \text{for all}\ i,j\in\mathbb{N},\ {\bm x}|_{\partial\Omega} ={\bm X}|_{\partial\Omega}\}$, and introduce the following minimization problem in the admissible set $E_{ad}$:
\begin{align}\label{min:explicit 2d}
{\bm x}^{k+1}:=\arg\inf_{\bm{x}\in E_{ad}}L_k({\bm x}),
\end{align} 
where $L_k({\bm x})$ is defined by
\begin{align}
L_k({\bm x})=&\frac{1}{2\delta t}\sum_{i,j}\rho_{ij}^0|{\bm x}_{ij}-{\bm x}_{ij}^{k}|^2h_xh_y+\frac{\epsilon_k}{2}\|\nabla_{\bm X}({\bm x}-{\bm x}^k)\|^2
+\bar{E}_h({\bm x}^k)+(\frac{\delta \bar{E}_h}{\delta {\bm x}}({\bm x}^k),{\bm x}-{\bm x}^k),\nonumber
\end{align}
with $\bar{E}_h({\bm x})=\sum_{i,j}F\left(\frac{\rho_{ij}^0}{\mathrm{det}\frac{\partial {\bm x}}{\partial {\bm X}}|_{ij}}\right)\mathrm{det}\frac{\partial {\bm x}}{\partial {\bm X}}|_{ij}h_xh_y$. Then the solution to the explicit numerical scheme \eqref{ex:1}-\eqref{ex:2} with regularization term $ \epsilon\Delta_{\bm X}({\bm x}^{k+1}-{\bm x}^k)$ is the minimizer of the minimization problem \eqref{min:explicit 2d}.

For the sake of simplicity, we consider the Porous-Medium equation in 2D, we have
\begin{align}
	\left(\frac{\delta \bar{E}_h}{\delta x}({\bm x}^k),x-x^k\right)=-\left((\frac{\rho^0}{\mathrm{det}\frac{\partial{\bm x}^k}{\partial {\bm X}}})^{m},\mathrm{det}\frac{\partial((x-x^k),y^k)}{\partial(X,Y)}\right),\nonumber\\
	\left(\frac{\delta \bar{E}_h}{\delta y}({\bm x}^k),y-y^k\right)=-\left((\frac{\rho^0}{\mathrm{det}\frac{\partial{\bm x}^k}{\partial {\bm X}}})^m,\mathrm{det}\frac{\partial(x^k,(y-y^k))}{\partial(X,Y)}\right),\nonumber\\
	\left(\frac{\delta^2 \bar{E}_h}{\delta x^2}({\bm x}^k),(x-x^k)^2\right)=m\left(\frac{(\rho^0)^m}{(\mathrm{det}\frac{\partial{\bm x}^k}{\partial {\bm X}})^{m+1}},(\mathrm{det}\frac{\partial((x-x^k),y^k)}{\partial(X,Y)})^2\right),\nonumber\\
	\left(\frac{\delta^2 \bar{E}_h}{\delta y^2}({\bm x}^k),(y-y^k)^2\right)=m\left(\frac{(\rho^0)^m}{(\mathrm{det}\frac{\partial{\bm x}^k}{\partial {\bm X}})^{m+1}},(\mathrm{det}\frac{\partial(x^k,(y-y^k))}{\partial(X,Y)})^2\right).\nonumber
\end{align}
Assume that the initial value $\rho({\bm X},0)\ge0$ is bounded, and $\mathrm{det}\frac{\partial {\bm x}^k}{\partial {\bm X}}>0$ is uniformly bounded away from zero in $k$ satisfying $\mathrm{det}\frac{\partial{\bm x}^k}{\partial {\bm X}}\ge \delta_0$ for some $\delta_0>0$, we have $\|\frac{m(\rho^0)^m}{(\mathrm{det}\frac{\partial{\bm x}}{\partial {\bm X}})^{m+1}}\|_{\infty}\le \frac{C_0} {\delta_0^{m+1}}$, where $C_0$ is a positive constant depending on $m$ and $\rho^0$. Then the following estimates will be obtained:
\begin{align}
	\left|m\left(\frac{(\rho^0)^m}{(\mathrm{det}\frac{\partial{\bm x}^k}{\partial {\bm X}})^{m+1}},(\mathrm{det}\frac{\partial((x-x^k),y^k)}{\partial(X,Y)})^2\right)\right|\le \frac{2C_0 }{\delta_0^{m+1}}\|\nabla_{\bm X} y^k\|_{\infty}^{2}\|\nabla_{\bm X}(x-x^k)\|^2,\\
	\left|m\left(\frac{(\rho^0)^m}{(\mathrm{det}\frac{\partial{\bm x}^k}{\partial {\bm X}})^{m+1}},(\mathrm{det}\frac{\partial(x^k,(y-y^k))}{\partial(X,Y)})^2\right)\right|\le \frac{2C_0 }{\delta_0^{m+1}}\|\nabla_{\bm X} x^k\|_{\infty}^{2}\|\nabla_{\bm X}(y-y^k)\|^2.
\end{align}
Under these assumptions, the following energy dissipation law can be obtained.

\begin{theorem}\label{thm:2d}
	The solution to the explicit numerical scheme \eqref{ex:1}-\eqref{ex:2} with the regularization term $\epsilon_k\Delta_{\bm X}({\bm x}^{k+1}-{\bm x}^k)$, $\epsilon_k\ge 0$ is the minimizer of the minimization problem \eqref{min:explicit 2d}. 
	If either of the following conditions is true:
	\begin{itemize}
		\item $\epsilon_k=0$, choose suitable time step controlled by $\delta t\le \tau_{\min}=\frac{\min{\rho^0_{ij}}\delta_0^{m+1}h^2}{2C_1C_0\|\nabla_{\bm X}{\bm x}^k\|_{\infty}^{2}}$ with $h=h_x=h_y$,
		\item choose suitable regularization parameter $\epsilon_k\ge \frac{C_0 \|\nabla_{\bm X}{\bm x}^k\|_{\infty}^{2}}{\delta_0^{m+1}}$,
	\end{itemize}
  then we have	the following energy dissipation law:
	\begin{align}
	\bar{E}_h({\bm x}^{k+1})\le\bar{E}_h({\bm x}^k).
	\end{align}
\end{theorem}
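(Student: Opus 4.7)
The plan is to first verify that the scheme \eqref{ex:1}--\eqref{ex:2} with the regularization term $\epsilon_k\Delta_{\bm X}({\bm x}^{k+1}-{\bm x}^k)$ is exactly the first-order optimality condition for \eqref{min:explicit 2d}. Since $\bar{E}_h$ is frozen at ${\bm x}^k$, the last two summands of $L_k$ are linear in $\bm x$, while the quadratic pieces $\tfrac{1}{2\delta t}\sum\rho^0_{ij}|{\bm x}-{\bm x}^k|^2 h_x h_y$ and $\tfrac{\epsilon_k}{2}\|\nabla_{\bm X}({\bm x}-{\bm x}^k)\|^2$ are strictly convex. Hence $L_k$ is strictly convex on $E_{ad}$, admits a unique minimizer ${\bm x}^{k+1}$, and $\delta L_k/\delta{\bm x}=0$ reproduces \eqref{ex:1}--\eqref{ex:2}.

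Next I would exploit minimality, $L_k({\bm x}^{k+1})\le L_k({\bm x}^k)=\bar{E}_h({\bm x}^k)$, to obtain
\begin{equation*}
\frac{1}{2\delta t}\sum_{i,j}\rho_{ij}^0|{\bm x}_{ij}^{k+1}-{\bm x}_{ij}^k|^2 h_x h_y+\frac{\epsilon_k}{2}\|\nabla_{\bm X}({\bm x}^{k+1}-{\bm x}^k)\|^2+\left(\frac{\delta\bar{E}_h}{\delta{\bm x}}({\bm x}^k),\,{\bm x}^{k+1}-{\bm x}^k\right)\le 0 .
\end{equation*}
Then I would Taylor-expand $\bar{E}_h$ along the segment $[{\bm x}^k,{\bm x}^{k+1}]$ and insert the Hessian bounds displayed just before the theorem (which use the uniform lower bound $\det(\partial{\bm x}/\partial{\bm X})\ge\delta_0$ presumed to propagate onto the segment). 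This yields
\begin{equation*}
\bar{E}_h({\bm x}^{k+1})\le\bar{E}_h({\bm x}^k)+\left(\frac{\delta\bar{E}_h}{\delta{\bm x}}({\bm x}^k),\,{\bm x}^{k+1}-{\bm x}^k\right)+\frac{C_0\|\nabla_{\bm X}{\bm x}^k\|_\infty^{2}}{\delta_0^{m+1}}\|\nabla_{\bm X}({\bm x}^{k+1}-{\bm x}^k)\|^2 ,
\end{equation*}
and adding this to the previous inequality gives
\begin{equation*}
\bar{E}_h({\bm x}^{k+1})-\bar{E}_h({\bm x}^k)\le -\frac{1}{2\delta t}\sum_{i,j}\rho_{ij}^0|{\bm x}_{ij}^{k+1}-{\bm x}_{ij}^k|^2 h_x h_y+\left(\frac{C_0\|\nabla_{\bm X}{\bm x}^k\|_\infty^{2}}{\delta_0^{m+1}}-\frac{\epsilon_k}{2}\right)\|\nabla_{\bm X}({\bm x}^{k+1}-{\bm x}^k)\|^2 .
\end{equation*}

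Finally I would treat the two cases. In case $\epsilon_k\ge C_0\|\nabla_{\bm X}{\bm x}^k\|_\infty^2/\delta_0^{m+1}$ (the theorem's threshold absorbs the factor $1/2$ into the constant naming), the second bracket is non-positive and $\bar{E}_h({\bm x}^{k+1})\le\bar{E}_h({\bm x}^k)$ follows immediately. In case $\epsilon_k=0$, I would apply the standard finite-difference inverse inequality $\|\nabla_{\bm X}{\bm v}\|^2\le C_1 h^{-2}\|{\bm v}\|^2$ with $h=h_x=h_y$ to bound the positive term by $C_0 C_1\|\nabla_{\bm X}{\bm x}^k\|_\infty^2 h^{-2}\delta_0^{-(m+1)}\|{\bm x}^{k+1}-{\bm x}^k\|^2$; the CFL condition $\delta t\le\tau_{\min}$ is precisely the threshold that makes the kinetic $\tfrac{\min\rho^0_{ij}}{2\delta t}\|\cdot\|^2$ term dominate, giving the dissipation.

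The main obstacle I anticipate is the \emph{uniform non-degeneracy} hypothesis: the Hessian estimates only hold where $\det(\partial{\bm x}/\partial{\bm X})\ge\delta_0$, but the Taylor remainder is evaluated at an intermediate point ${\bm x}^*$ on the segment joining ${\bm x}^k$ and ${\bm x}^{k+1}$. To close the argument rigorously one would need a bootstrap: assume $\det\ge\delta_0$ along the segment, derive the dissipation which controls $\|{\bm x}^{k+1}-{\bm x}^k\|$, and verify that the Jacobian determinant cannot collapse within a time step under the stated CFL/regularization conditions. A parallel issue is the need to propagate the $L^\infty$-control of $\nabla_{\bm X}{\bm x}^k$ inductively, which should be accepted as an a priori regularity assumption on the discrete flow map rather than proved here.
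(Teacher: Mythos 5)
Your proposal follows essentially the same route as the paper's proof: exploit $L_k({\bm x}^{k+1})\le L_k({\bm x}^{k})=\bar{E}_h({\bm x}^k)$, Taylor-expand $\bar{E}_h$ with the displayed Hessian bounds, and close the two cases via the inverse inequality (for $\epsilon_k=0$) or absorption into the regularization term. Your bookkeeping is in fact slightly more careful than the paper's — you track the factor $\tfrac{\epsilon_k}{2}$ explicitly and flag that the Hessian is evaluated at an intermediate point of the segment, a non-degeneracy issue the paper's proof passes over silently — but these are refinements of the same argument, not a different one.
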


\begin{proof}
	Notice that $L_k({\bm x}^{k+1})\le L_k({\bm x}^{k})=\bar{E}_h({\bm x}^k)$, and
	\begin{align}
	\bar{E}_h({\bm x}^{k+1})=&\bar{E}_h({\bm x}^{k})+(\frac{\delta\bar{E}_h}{\delta{\bm x}}({\bm x}^k), {\bm x}^{k+1}-{\bm x}^k)+\frac{1}{2}(\frac{\delta^2 \tilde{E}}{\delta {\bm x}^2}((1-t){\bm x}^k+t{\bm x}^{k+1}),({\bm x}^{k+1}-{\bm x}^k)^2)\nonumber\\
	\le&\bar{E}_h({\bm x}^{k})+(\frac{\delta\bar{E}_h}{\delta{\bm x}}({\bm x}^k), {\bm x}^{k+1}-{\bm x}^k)+\frac{C_0 }{\delta_0^{m+1}}\|\nabla_{\bm X}{\bm x}^k\|_{\infty}^{2}\|\nabla_{\bm X}({\bm x}^{k+1}-{\bm x}^k)\|^2,\nonumber
	\end{align}
	for $0<t<1$, then the energy dissipation law will be derived once we choose suitable $\epsilon_k$ and time step $\delta t$ such that the following inequality holds:
	\begin{align}
		\bar{E}_h({\bm x}^{k})+(\frac{\delta\bar{E}_h}{\delta{\bm x}}({\bm x}^k), {\bm x}^{k+1}-{\bm x}^k)+\frac{C_0 }{\delta_0^{m+1}}\|\nabla_{\bm X}{\bm x}^k\|_{\infty}^{2}\|\nabla_{\bm X}({\bm x}^{k+1}-{\bm x}^k)\|^2\le L_k({\bm x}^{k+1}),\nonumber
	\end{align}
	we only need to guarantee that
	\begin{align}
	\frac{C_0 }{\delta_0^{m+1}}\|\nabla_{\bm X}{\bm x}^k\|_{\infty}^{2}\|\nabla_{\bm X}({\bm x}^{k+1}-{\bm x}^k)\|^2
	\le \frac{1}{2\delta t}\sum_{i,j}\rho_{ij}^0|{\bm x}_{ij}^{k+1}-{\bm x}_{ij}^{k}|^2h^2+\epsilon_k\|\nabla_{\bm X} ({\bm x}^{k+1}-{\bm x}^k)\|^2.\nonumber
	\end{align}
	
	In the case where $\epsilon_k=0$, 
	using the inverse estimate, we have 
	$$\frac{C_0 }{\delta_0^{m+1}}\|\nabla_{\bm X}{\bm x}^k\|_{\infty}^{2}\|\nabla_{\bm X}({\bm x}^{k+1}-{\bm x}^k)\|^2\le\frac{C_1C_0\|\nabla_{\bm X}{\bm x}^k\|_{\infty}^{2}}{h^2 \delta_0^{m+1}}\|{\bm x}^{k+1}-{\bm x}^k\|^2,$$
	then the left hand side of above inequality will be controlled by $ \frac{1}{2\delta t}\sum_{i,j}\rho_{ij}^0|{\bm x}_{ij}^{k+1}-{\bm x}_{ij}^{k}|^2h^2$ once we choose suitable time step $\delta t\le \frac{\min{\rho^0_{ij}}\delta_0^{m+1}h^2}{2C_1C_0\|\nabla_{\bm X}{\bm x}^k\|_{\infty}^{2}}:=\tau_{\min}$ such that $\frac{C_1C_0\|\nabla_{\bm X}{\bm x}^k\|_{\infty}^{2}}{\delta_0^{m+1}h^2}\le\frac{\min{\rho^0_{ij}}}{2\delta t}$. The energy dissipation law can be obtained. 
	
	In the other case where we choose suitable regularization parameter $\epsilon_k\ge \frac{C_0 \|\nabla_{\bm X}{\bm x}^k\|_{\infty}^{2}}{\delta_0^{m+1}}$ such that $\frac{C_0 }{\delta_0^{m+1}}\|\nabla_{\bm X}{\bm x}^k\|_{\infty}^{2}\|\nabla_{\bm X}({\bm x}^{k+1}-{\bm x}^k)\|^2
	\le \epsilon_k\|\nabla_{\bm X} ({\bm x}^{k+1}-{\bm x}^k)\|^2$, the desired energy dissipation law is derived. 
\end{proof}

\begin{remark}
	In the proof of Theorem \ref{thm:2d}, we assume that $\mathrm{det}\frac{\partial{\bm x}}{\partial {\bm X}}\ge\delta_0>0$ is away from zero. From  determinant plots depicted by numerical experiments in Section \ref{sec:num in 2d}, we observe that the determinant of the deformation gradient for the Porous-Medium equation with Barenblatt solution and Aggregation equation satisfies this assumption. However, for the Keller-Segel model, where the particles aggregate at the center or at the circumference under certain conditions, the positivity of the determinant will not be maintained with time, and eventually the trajectory will become distorted, and once this distortion occurs, the numerical experiment will be stopped. Such cases need to be further investigated.
\end{remark}

\begin{remark}
	Similarly, if we consider the minimization problem with the regularization term $\epsilon_k\delta t\|\nabla_{\bm X}{\bm x}^{k+1}\|^2$, the energy dissipation law $\bar{E}_h({\bm x}^{k+1})+\epsilon_k\delta t\|\nabla_{\bm X}{\bm x}^{k+1}\|^2\le\bar{E}_h({\bm x}^k)+\epsilon_k\delta t\|\nabla_{\bm X}{\bm x}^{k}\|^2$ is derived if  we choose suitable $\epsilon_k$ and $\delta t$ such that the following inequality holds:
	\begin{align}
	\frac{C_0 }{\delta_0^{m+1}}\|\nabla_{\bm X}{\bm x}^k\|_{\infty}^{2}\|\nabla_{\bm X}({\bm x}^{k+1}-{\bm x}^k)\|^2
	\le \frac{1}{2\delta t}\sum_{i,j}\rho_{ij}^0|{\bm x}_{ij}-{\bm x}_{ij}^{k}|^2h^2+\epsilon_k\delta t\|\nabla_{\bm X}{\bm x}^{k+1}\|^2.\nonumber
	\end{align}
	In the case where we choose suitable regularization parameter $\epsilon_k$ to control the left hand side of above inequality by the regularization term, it can be roughly estimated that 
	  $$\epsilon_k\ge\frac{1}{\delta t}\frac{C_0\|\nabla_{\bm X}{\bm x}^k\|_{\infty}^{2}\|\nabla_{\bm X}({\bm x}^{k+1}-{\bm x}^k)\|^2}{ \delta_0^{m+1}\|\nabla_{\bm X}{\bm x}^{k+1}\|^2}= \frac{C_0\|\nabla_{\bm X}{\bm x}^k\|_{\infty}^{2}\|\nabla_{\bm X}\frac{{\bm x}^{k+1}-{\bm x}^k}{\delta t}\|^2}{\delta_0^{m+1}\|\nabla_{\bm X}{\bm x}^{k+1}\|^2} \delta t\ge C_2\delta t.$$	
\end{remark}

\begin{remark}
	It is worth noting that when we simulate numerical experiments using the explicit numerical scheme without regularization term, the time step $\delta t$ is supposed to be sufficient small to guarantee the stability of the numerical scheme. If we carry out numerical experiments with the regularization term $\epsilon_k\|\nabla_{\bm X}({\bm x}^{k+1}-{\bm x}^k)\|^2$, the regularization parameter should be taken appropriately through $\epsilon_k\ge C_0\|\nabla_{\bm X}{\bm x}^k\|_{\infty}^{2}/\delta_0^{m+1}$ to ensure the stability of the numerical scheme. The regularization term can also be taken as $\epsilon_k\delta t\|\nabla_{\bm X} {\bm x}^{k+1}\|^2$ with $\epsilon_k\ge C_2\delta t$, as displayed in the following numerical experiments.
\end{remark}

\section{Numerical simulations}\label{sec:num}
In this section, numerical experiments for Porous-Medium equation, Fokker-Planck equation, Keller-Segel equation and Aggregation equation will be considered in one dimension and two dimension to validate the accuracy and stability of our proposed numerical schemes based on our flow dynamic approach. 
\subsection{One dimension}
 For simplicity, we shall first show numerical experiments for  models in 1D, then we consider numerical simulations in 2D  in next subsection. 
\subsubsection{Porous medium equation}
The porous medium equation $\partial_t\rho=\Delta\rho^m$, $m>1$, can be regarded as the Wasserstein gradient flow with energy defined by
\begin{align*}
E(\rho)=\int_{\Omega}\frac{1}{m-1}\rho^m\mathrm{d}x.
\end{align*}

{\bf Convergence test}. 
Consider the following smooth initial value: 
\begin{align}
	\rho_0(x)=\cos\left(\frac{\pi x}{2}\right),\qquad x\in[-1,1],
\end{align}
with Dirichlet boundary condition $x|_{\partial\Omega}=X|_{\partial\Omega}$.
 The numerical solution is solved by using scheme \eqref{schem:1}-\eqref{schem:1-2} without regularization term. The reference solution is computed under  very fine meshes with $M=8000$, $\delta t=1/64000$. The convergence rates for density $\rho$ and trajectories ${\bm x}$ in $L^2$  and $L^\infty$ norms  are shown in Table~\ref{convergence fix}. We also depict  the evolutions of  energy and mass in Figure~\ref{fig:energy_and_mass} which show the property of energy dissipation and mass conserving with respective to time. 
\begin{table}[!htb]
	\centering
	\caption{Convergence order of trajectory $x$ and density $\rho$  with $m=2$ at $T=0.5$.}
		\begin{tabular}{cccccccccc}
			\hline
			M &$\delta t$&$L_h^{2}$ error ($x$) & order & $L^{\infty}$ error ($x$) & order &\ $L_h^{2}$ error ($\rho$) & order\\ \hline
			100 &1/100& 4.0522e-04	& & 2.3834e-04& & 3.6068e-04&\\
			200 &1/400& 9.6283e-05&2.0734 &5.6442e-05&2.0782&8.8534e-05 &2.0624\\
			400 &1/1600& 2.3484e-05&2.0356& 1.3746e-05 &2.0377&	2.2188e-05& 1.9964\\
			800 &1/6400& 5.5614e-06&2.0781&  3.2461e-06&2.0822&5.4442e-06  &2.0270\\
			\hline
		\end{tabular}\label{convergence fix}
\end{table}

\begin{figure}[!htb]
	\centering
	\subfigure[Energy dissipation]{
		\includegraphics[width=0.3\textwidth]{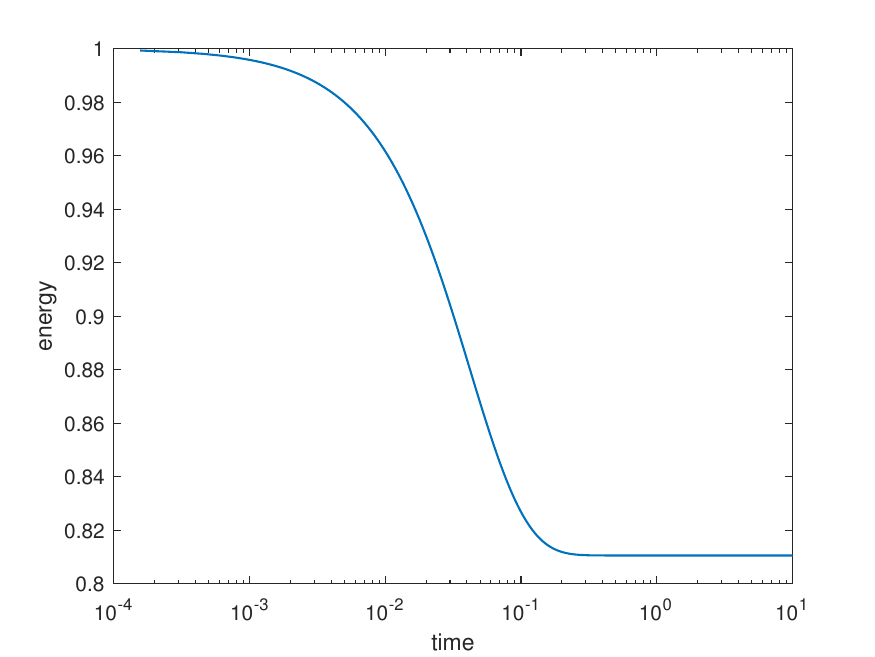}
	}
	\subfigure[Mass conservation]{
		\includegraphics[width=0.3\textwidth]{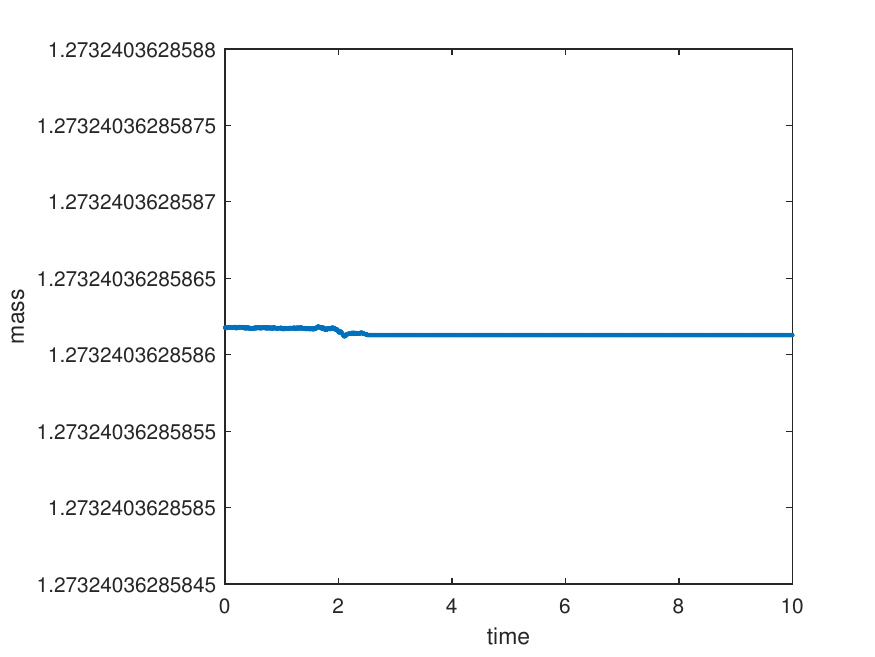}
	}
	\subfigure[Convergence order]{
	\includegraphics[width=0.3\textwidth]{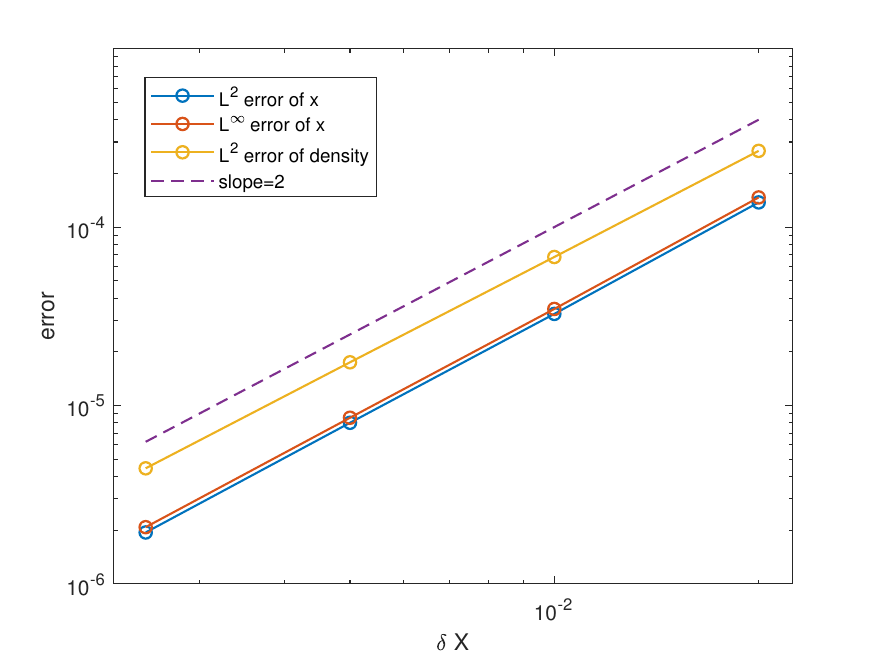}
}
	\caption{The evolutions of energy and mass with respective to time under $m=2$ with $M=800$, $\delta t=1/6400$.}\label{fig:energy_and_mass}
\end{figure}

{\bf Free boundaries.} 
Considering  the Barenblatt solution for Porous medium equation with free boundaries \cite{vazquez2007porous}:
\begin{align}
	B_m(x,t)=(t+1)^{-k}\left(1-\frac{k(m-1)}{2m}\frac{|x|^2}{(t+1)^{2k}}\right)_{+}^{1/(m-1)},
\end{align}
where $k=(m+1)^{-1}$. The support set of the solution is $[l_m(t),r_m(t)]$ with the moving interface $r_m(t)=-l_m(t):=\sqrt{\frac{2m}{k(m-1)}}(t+1)^k$. 

Using scheme \eqref{schem:1}-\eqref{schem:1-2} without regularization term to calculate the interior points, and \eqref{appendix:pme_boundary1}-\eqref{appendix:pme_boundary2} to compute the boundaries. We choose the Barenblatt solution $B_m(x,0)$ as the initial value to simulate the phenomenon of moving interface. The results are displayed in Figure~\ref{fig:barenblatt}, it can be found that the free boundaries move with a finite speed, and the numerical propagation speed is consistent with the exact solution. The proposed scheme satisfies the property of  energy dissipating, positivity-preserving and mass conserving for  the density $\rho$. 
\begin{figure}[!htb]
	\centering
	\subfigure[Density $\rho$ when $m=2$]{
		\includegraphics[width=0.45\textwidth]{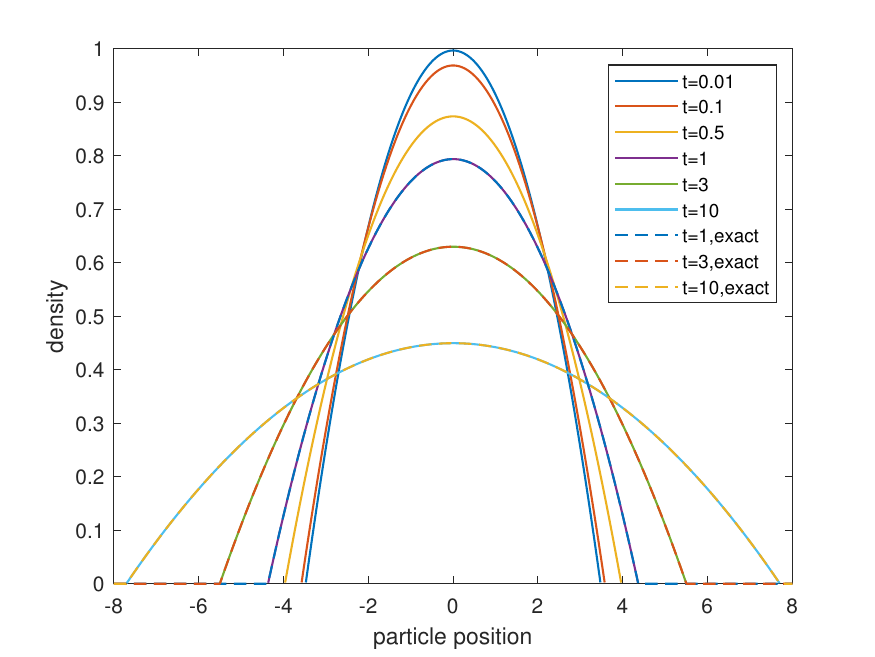}
	}
	\subfigure[Right boundary when  $m=2$, $3$]{
	\includegraphics[width=0.45\textwidth]{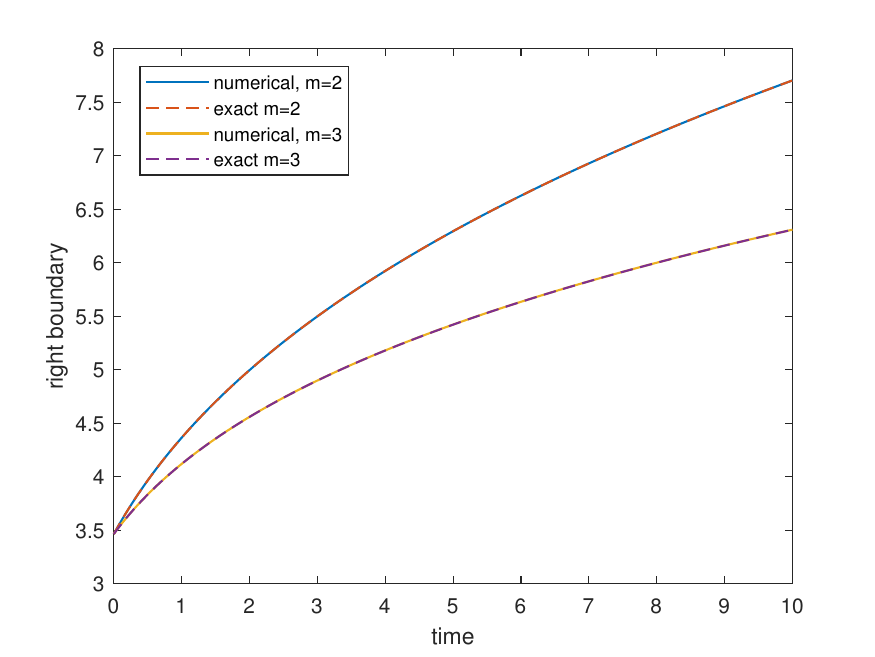}
}
	\subfigure[Energy  when $m=2$, $3$]{
		\includegraphics[width=0.45\textwidth]{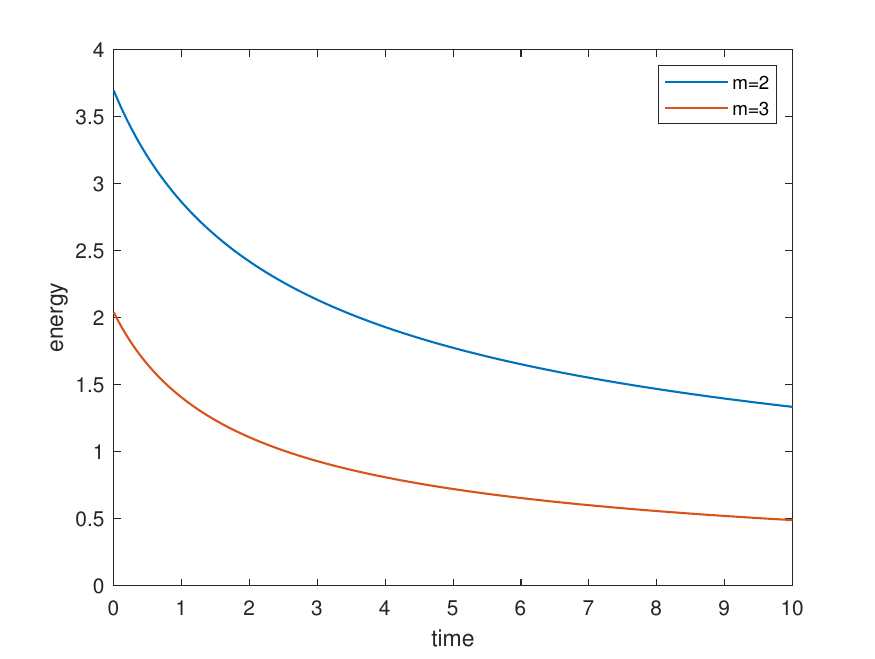}
	}
	\subfigure[Mass when  $m=2$]{
	\includegraphics[width=0.45\textwidth]{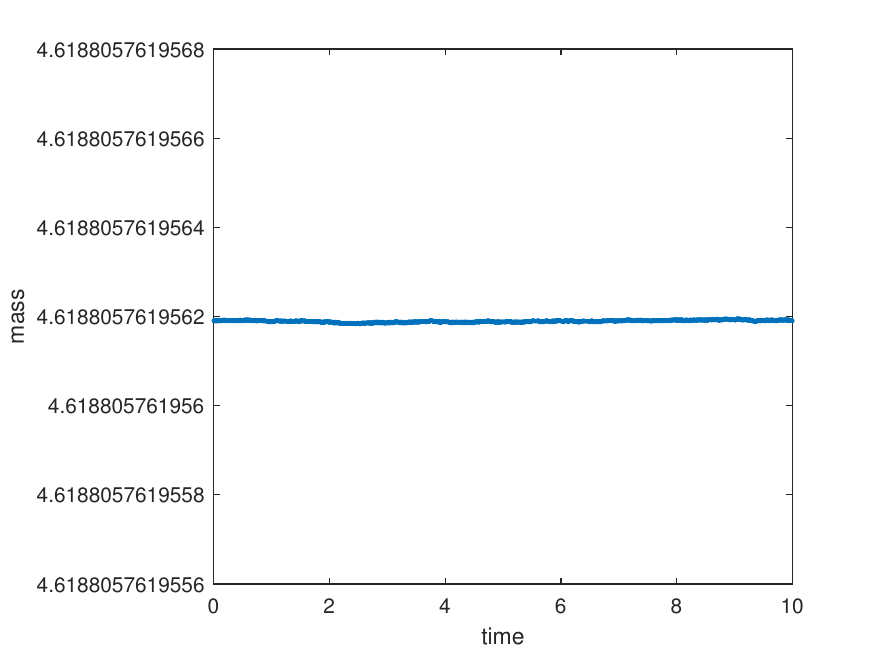}
}
	\caption{The evolutions of density, energy and mass  for the Barenblatt solution with $M=800$, $\delta t=1/6400$.}\label{fig:barenblatt}
\end{figure}

Convergence results of the trajectory and Barenblatt solution  with $m=2$ are shown in Table~\ref{convergence baren m=2}. The reference solution of  trajectory is obtained on  refine meshes, i.e.\  $M=8000$, $\delta t=1/64000$. The Barenblatt solution $B_m(x,0.5)$ is taken as the exact solution to test the convergence rate for  density $\rho$.  It is observed  in Table~\ref{convergence baren m=2} and Table~\ref{convergence baren m=2.5} that the convergence rates in time  remain to be  second order with $m=2$, and we can not achieve optimal convergence rate when we take $m=2.5$ due to  limitation of  regularity  for density $\rho$.  We also observe that the errors for density $\rho$  at $x=0$ away from the boundaries remains to be  second order convergence rate  when we take  $m=2.5$. 
\begin{table}[!htb]
	\centering
	\caption{Convergence order of trajectory $x$ and density $\rho$  with $B_m(x,T)$ at $T=0.5$, $m=2$.}
	\begin{tabular}{cccccccccc}
		\hline
		M &$\delta t$&$L_h^{2}$ error ($x$) & order & $L^{\infty}$ error ($x$) & order &\ $L_h^{2}$ error ($\rho$) & order\\ \hline
		100 &1/100& 0.0014	& & 8.3240e-04& & 5.5360e-04&\\
		200 &1/400& 3.4387e-04&2.0315 &1.8335e-04&2.1827&1.3922e-04&1.9915\\
		400 &1/1600& 8.3217e-05&2.0496& 4.4631e-05 &2.0385&	3.4935e-05& 1.9946\\
		800 &1/6400& 1.8721e-05&2.1522&  1.0040e-05&2.1523&8.7565e-06  &1.9963\\
		\hline
	\end{tabular}\label{convergence baren m=2}
\end{table}
\begin{table}[!htb]
	\centering
	\caption{Convergence order of trajectory $x$ and density $\rho$ with $B_m(x,T)$ at $T=0.5$, $m=2.5$.}
	\begin{tabular}{cccccccccc}
		\hline
		M &$\delta t$&$L_h^{2}$ error ($x$) & order &$L_h^{2}$ error ($\rho$)  & order &\ error ($0$) & order\\ \hline
		100 &1/100& 0.0011	& & 8.0390e-04& & 2.2763e-04&\\
		200 &1/400& 3.7503e-04&1.5509 &3.6544e-04&1.1374&5.7061e-05&1.9961\\
		400 &1/1600& 1.5814e-04&1.2458& 1.8045e-04 &1.0181&	1.4262e-05& 2.0003\\
		800 &1/6400& 6.5878e-05&1.2634&  8.7376e-05&1.0463&3.5611e-06  &2.0017\\
		\hline
	\end{tabular}\label{convergence baren m=2.5}
\end{table}

{\bf Waiting time}. It is known that solutions to the porous medium equation may show the phenomenon of  waiting time.  This phenomenon indicates that the support set of solutions will not expand during a positive time $t^*$, after which, it will start move at a finite speed.   $t^*$ is called the waiting time. 

To be specific, the propagation speed at the boundary for the porous medium equation can be calculated by  \cite{duan2019pme,duan2021structure}
\begin{align*}
\partial_tx=-\frac{m}{m-1}\frac{\partial_X(\rho(X,0))^{m-1}}{(\partial_Xx)^{m}}.
\end{align*}
The numerical waiting time can be calculated as the first instance such that $\partial_tx\neq0$ as stated in \cite{duan2019pme,duan2021structure}. 
Considering the following initial value:
\begin{align}\label{initial:wt}
	\rho_0(x)=\left(\frac{m-1}{m}\left((1-\theta)\sin^2(x)+\theta\sin^4(x)\right)\right)^{1/(m-1)},\qquad x\in[-\pi,0],
\end{align}
where $\theta\in[0,0.25]$. For the initial value \eqref{initial:wt}, $m=2$ and $\theta=0.25$, we use scheme \eqref{schem:1}-\eqref{schem:1-2} without regularization term to calculate the interior points, and use \eqref{appendix:pme_boundary1}-\eqref{appendix:pme_boundary2} to calculate the boundaries, the numerical results are displayed in Figure~\ref{fig:waitingtime}, the free boundaries remain to be static during $0<t\le0.22$. After the moment $t=0.22$,  the free  boundaries begin to move at a finite speed.
\begin{figure}[!htb]
	\centering
		\includegraphics[width=0.55\textwidth]{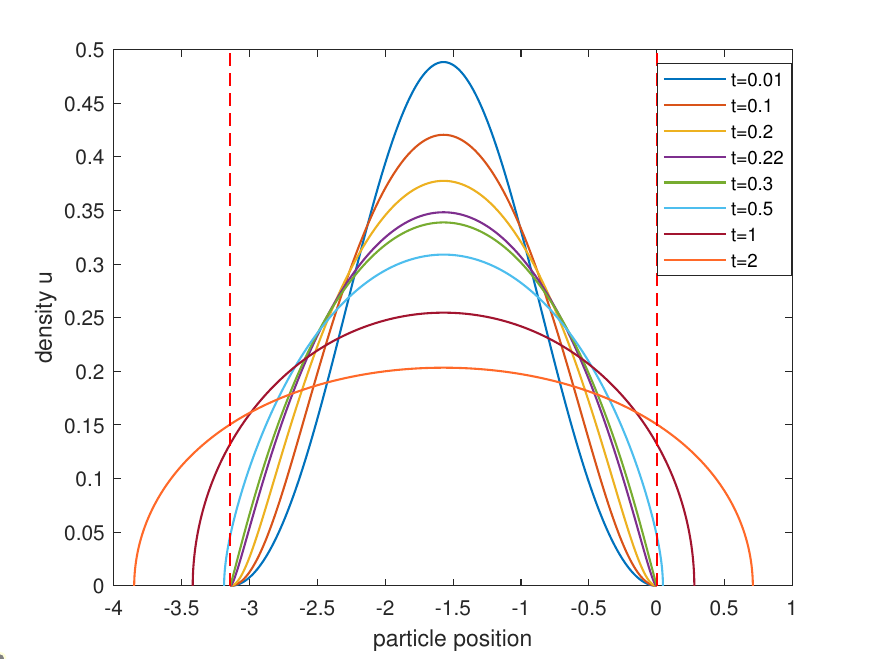}
	\caption{Density  plots for the initial value \eqref{initial:wt} with $m=2$, $\theta=0.25$, $M=800$, $\delta t=1/800$.}\label{fig:waitingtime}
\end{figure}

The waiting time for the initial value \eqref{initial:wt} is given theoretically in\cite{aronson1983initially} by $t_{w,e}:=\frac{1}{2(m+1)(1-\theta)}$. Now, by using scheme \eqref{schem:1}-\eqref{schem:1-2} without regularization term to calculate the interior points and \eqref{appendix:pme_boundary1}-\eqref{appendix:pme_boundary2} to compute the boundaries, we calculate the waiting time numerically with different $\theta$ and $m$ to compare the numerical waiting time with the exact formulation, the results are shown in Figure~\ref{fig:influencewaitingtime}, it can be observed that the tendency of the numerical waiting time is consistent with the theoretical result, and it will converge to the exact waiting time when we reduce time steps, as displayed in Table~\ref{table:wt}.
\begin{figure}[!htb]
	\centering
	\subfigure[Influence of $\theta$ with $m=2$ ]{
		\includegraphics[width=0.45\textwidth]{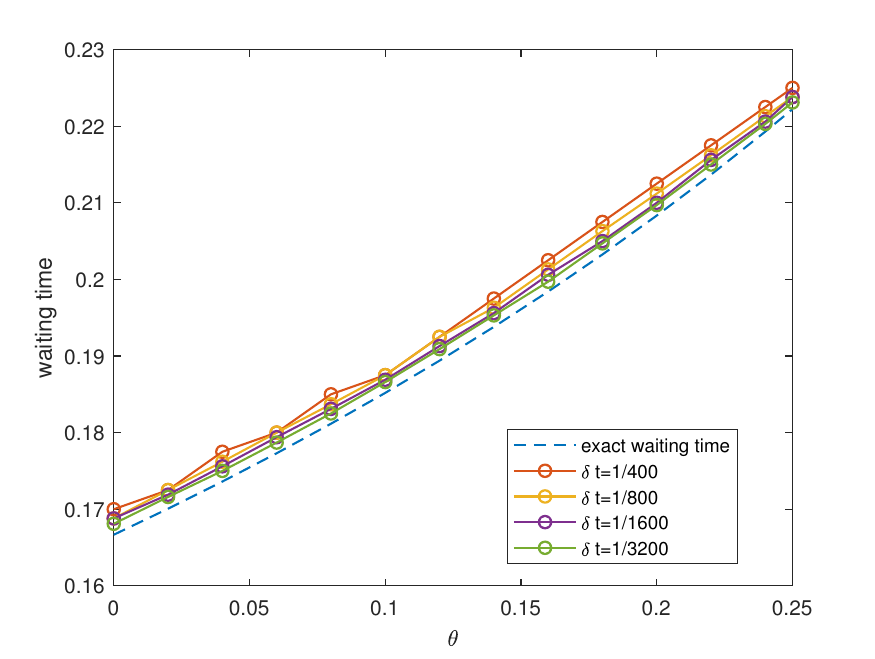}
	}
	\subfigure[Influence of $m$ with $\theta=0.25$]{
	\includegraphics[width=0.45\textwidth]{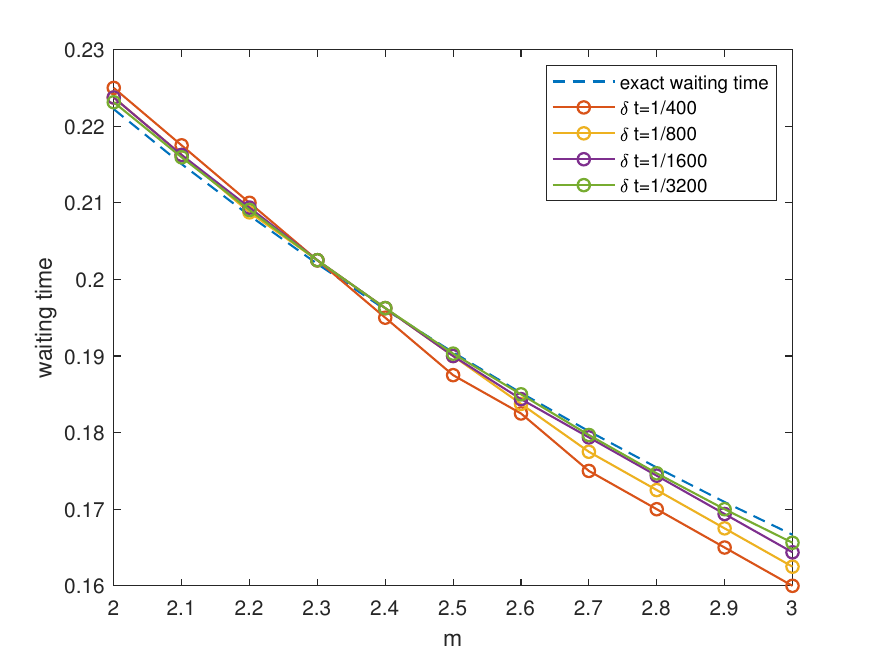}
}
	\caption{Influence of the parameter $\theta$ and $m$ with different $\delta t$ and $M=1/\delta t$.}\label{fig:influencewaitingtime}
\end{figure}
\begin{table}[!htb]
	\centering
	\caption{Convergence order of the waiting time with the initial value \eqref{initial:wt}, $m=2$, $\theta=0.25$, the exact waiting time is $t_{w,e}=\frac{1}{2(m+1)(1-\theta)}=\frac{2}{9}$.}
	\begin{tabular}{cccccccccc}
		\hline
		M &$\delta t$&$t_{w,h}$& $|t_{w,h}-t_{w,e}|$ & order\\ \hline
		1000 &1/1000& 0.2240	& 0.0018& \\
		2000 &1/2000& 0.2235& 0.0013 &0.4695\\
		4000 &1/4000& 0.2233&0.0011& 0.2410\\
		8000 &1/8000& 0.2229&0.0007&  0.6521\\
		\hline
	\end{tabular}\label{table:wt}
\end{table}

\subsubsection{Fokker-Planck equation}
In this subsection, we will discuss the Fokker-Planck equation with different potentials by choosing various $U(\rho)$, $V(x)$ and taking  $W(x)=0$. 

{\bf Nonlinear Fokker-Planck equation.} 
For the nonlinear Fokker-Planck equation, $U(\rho)$ is taken to be $\frac{1}{m-1}\rho^m$, and $V(x)$ will be taken as one-well and double-well potential, respectively. 
If $V(x)$ is a confining drift potential, all solutions will approach to  a unique steady state which is formulated as, see \cite{carrillo2001entropy,carrillo2000asymptotic,carrillo2022primal}
\begin{align}
	\rho_{\infty}(x)=\left(C_{fp}-\frac{m-1}{m}V(x)\right)_+^{\frac{1}{m-1}},
\end{align}
where $C_{fp}>0$ is determined by the mass of initial value such that $\int_{\Omega}\rho_0(x)\mathrm{d}x=\int_{\Omega}\rho_{\infty}(x)\mathrm{d}x$.
 In the following, one-well potential  $V(x)=\frac{|x|^2}{2}$ and double-well  potential $V(x)=\frac{|x|^4}{4}-\frac{|x|^2}{2}$ will be considered.  

{\bf One well}. Taking $V(x)=\frac{|x|^2}{2}$,  we consider the following energy with one-well potential:
\begin{align*}
	E(\rho)=\int_{\Omega}\frac{1}{m-1}\rho^m+\frac{|x|^2}{2}\rho\ \mathrm{d}x.
\end{align*}
Consider $m=2$ and the initial value to be 
\begin{align}
	\rho_0(x)=\max\{1-|x|,0\},
\end{align}
In this case, the  stationary solution $\rho_{\infty}$ is given  in \cite{duan2021structure}, formulated as 
\begin{align}
	\rho_{\infty}=\max\left\{\left(\frac{3}{8}\right)^{\frac{2}{3}}-\frac{x^2}{4},0\right\},
\end{align}
where $(\frac{3}{8})^{\frac{2}{3}}$ is determined by the mass conservative property, such that $\int_{\Omega}\rho_{\infty}\mathrm{d}x=\int_{\Omega}\rho_0\mathrm{d}x$. The relative energy is defined by $E(t)=E(t|\infty)/E(0|\infty)$ with $E(t|\infty)=E(t)-E(\infty)$.

Using scheme \eqref{schem:1}-\eqref{schem:1-2} and enforcing the  free boundary \eqref{fp:free boundary} without  regularization term to solve the Fokker-Planck model, the numerical results are displayed in Figure~\ref{fig:fp_one_well}. As time increases, the profile of density converges to the steady state when time goes to  $T=10$. The convergence rate for the density is computed at the stationary time $T=10$.
It can be observed that the convergence rates of $L^2$ error and error at $x=0$ are second-order.  The proposed scheme is mass-conserving, and is also energy dissipative in time. As shown in diagram (d) in Figure~\ref{fig:fp_one_well}, the scaling law of the relative energy is about $e^{-6t}$.
\begin{figure}[!htb]
	\centering
		\subfigure[Evolution of density ]{
		\includegraphics[width=0.44\textwidth]{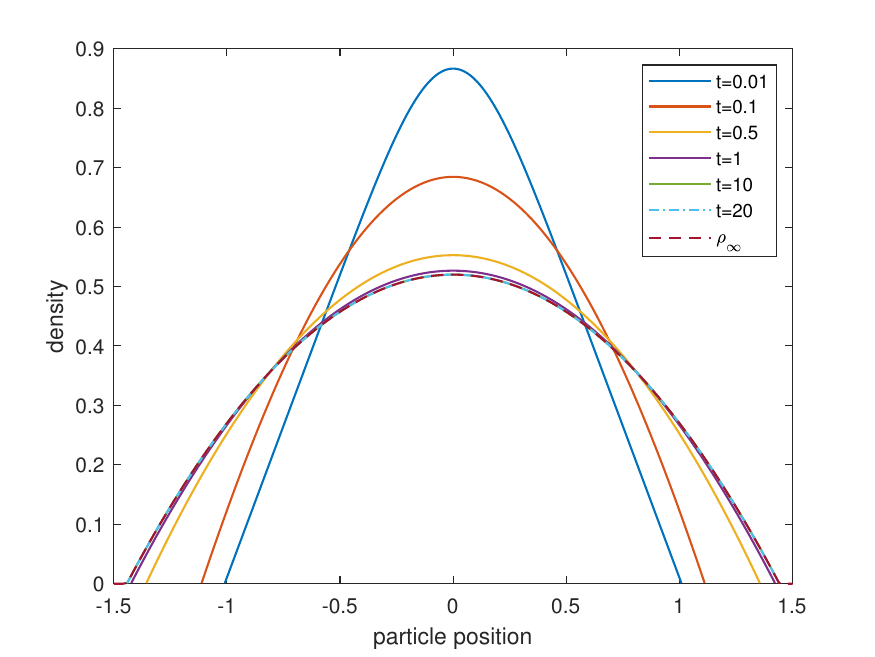}
	}	
\subfigure[Energy and mass ]{
	\includegraphics[width=0.44\textwidth]{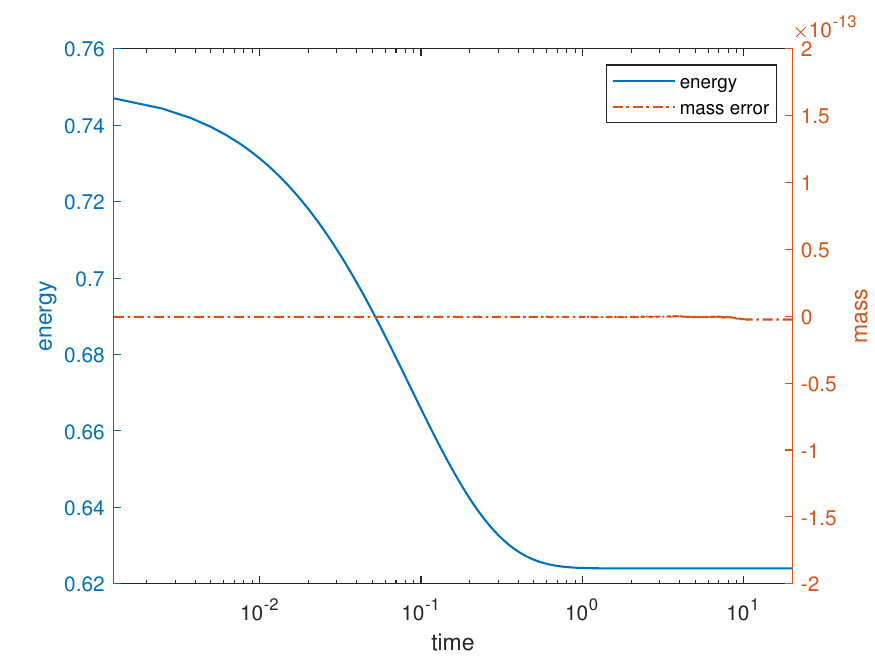}
}
		\subfigure[Convergence rates of $\rho$ at $T=10$ ]{
	\includegraphics[width=0.44\textwidth]{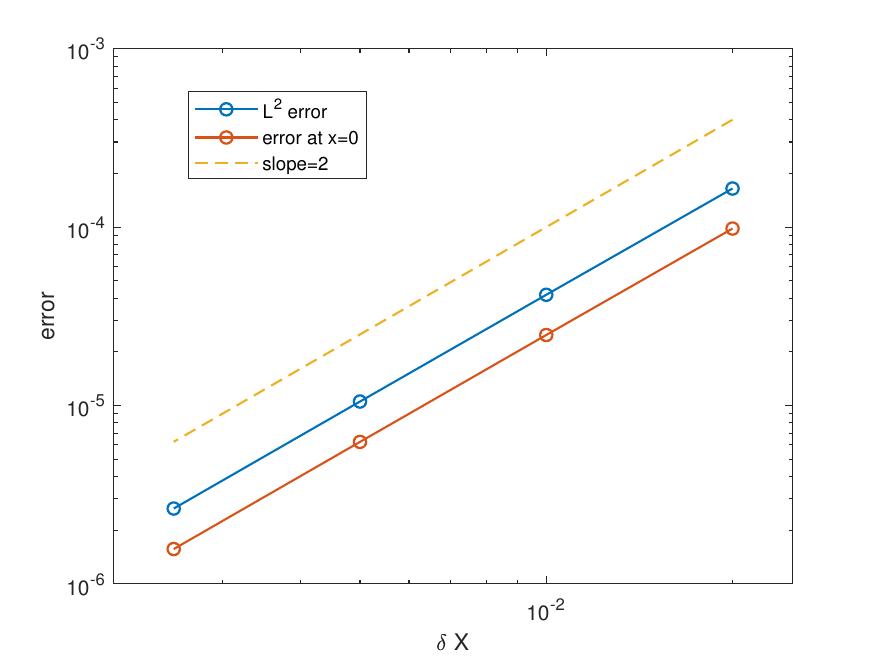}
}
	\subfigure[Scaling law ]{
	\includegraphics[width=0.44\textwidth]{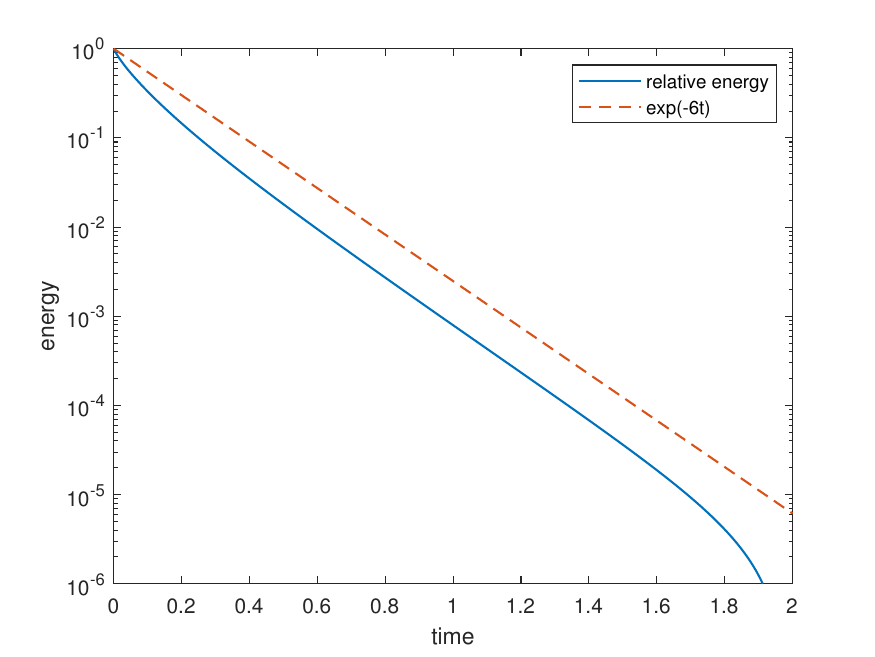}
}
	\caption{ Fokker-Planck equation with the one-well potential, $m=2$, $M=800$, $\delta t=1/800$.}\label{fig:fp_one_well}
\end{figure}

{\bf Double well}. 
Taking $V(x)=\frac{|x|^4}{4}-\frac{|x|^2}{2}$,  we consider the following energy with double-well potential:
\begin{align*}
E(\rho)=\int_{\Omega}\frac{1}{m-1}\rho^m+\left(\frac{|x|^4}{4}-\frac{|x|^2}{2}\right)\rho\ \mathrm{d}x.
\end{align*}
%
%
%
Let $m=2$, choosing the following initial value with $\sigma=1$:
\begin{align}\label{initial:double_well}
\rho_0(x)=(x^2+10^{-6}e^{-\frac{x^2}{2\sigma^2}})(1-x^2),\qquad x\in[-1,1].
\end{align}

Now, we implement numerical simulations with  initial value \eqref{initial:double_well} by using scheme \eqref{schem:1}-\eqref{schem:1-2} and \eqref{fp:free boundary} without regularization term,  where the potential is taken as $V(x)=\frac{|x|^4}{4}-\frac{|x|^2}{2}$ and $V(x)=\frac{|x|^2}{2}$, respectively, the results are shown in Figure~\ref{fig:fp_double_well}.
When the numerical solution is computed with double-well  potential $V(x)=\frac{|x|^4}{4}-\frac{|x|^2}{2}$,  the stationary state will also be  double-well.  If the potential is set to be one-well  $V(x)=\frac{|x|^2}{2}$,  the stationary state will also  be one-well.
\begin{figure}[!htb]
	\centering
		\subfigure[Density, one-well potential ]{
		\includegraphics[width=0.3\textwidth]{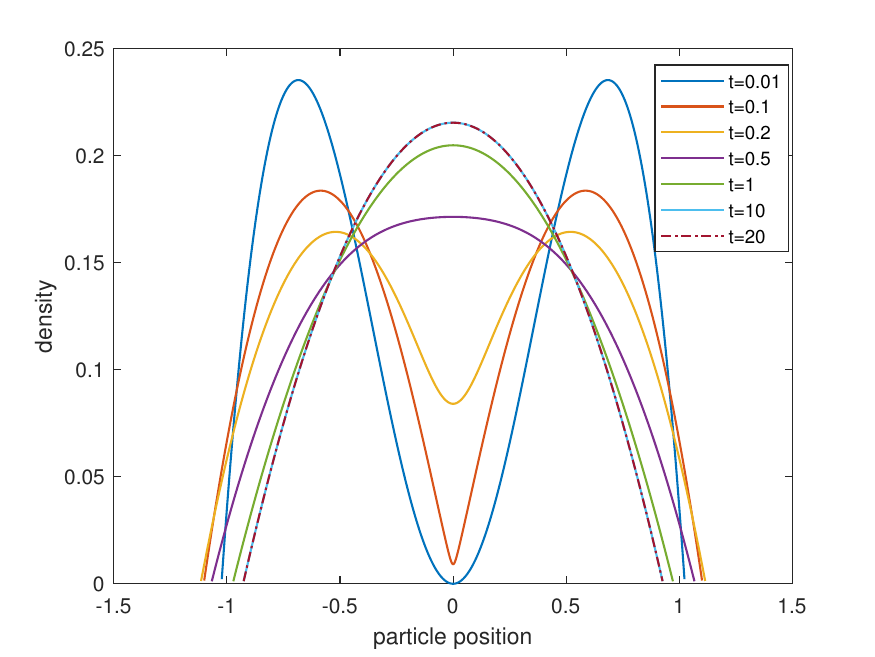}
	}
	\subfigure[Density, double-well potential]{
		\includegraphics[width=0.3\textwidth]{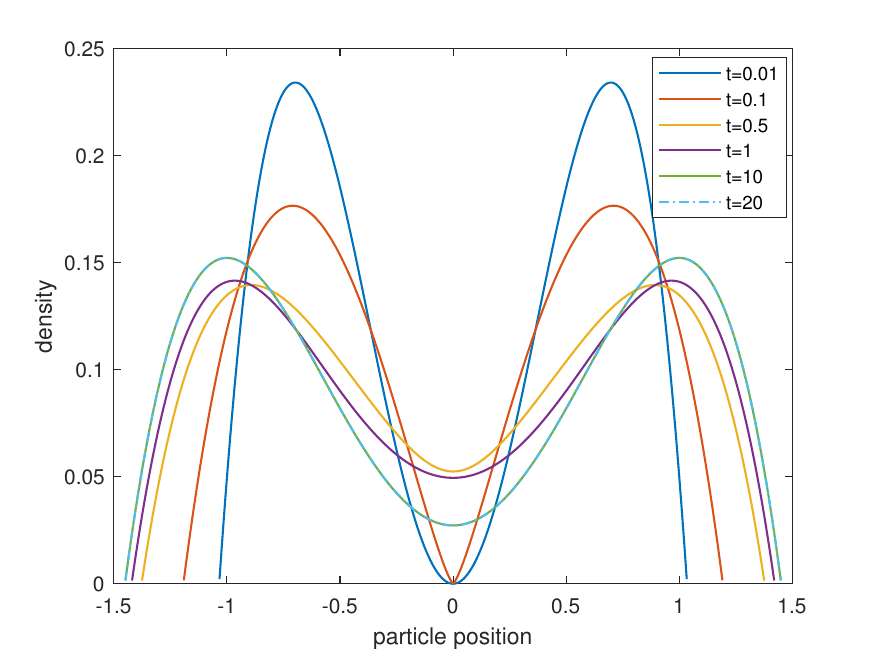}
	}
	\subfigure[Energy and mass]{
		\includegraphics[width=0.3\textwidth]{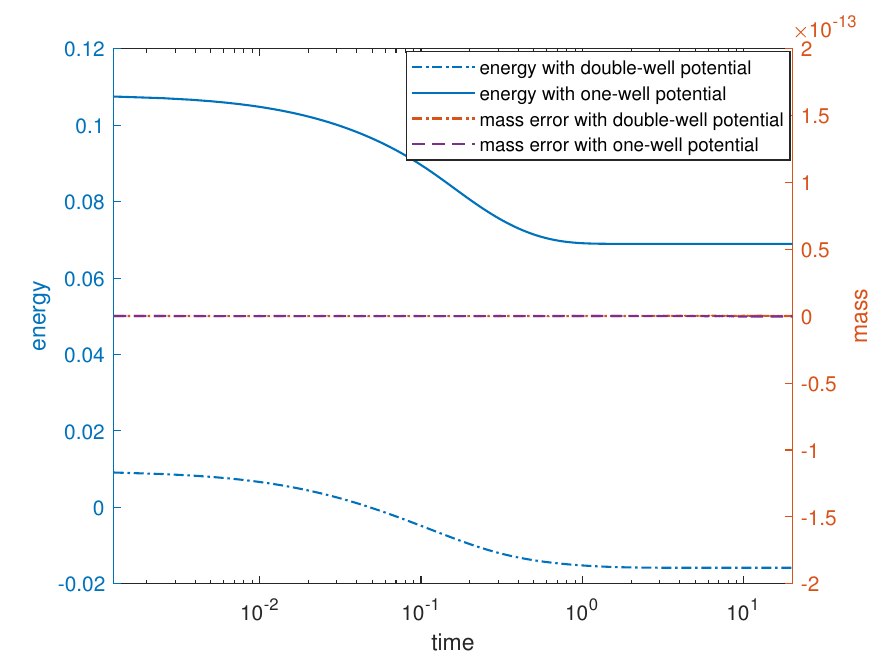}
	}
	\caption{Double-well initial problem for the Fokker-Planck equation with the one-well potential and the double-well potential, $m=2$, $M=800$, $\delta t=1/800$.}\label{fig:fp_double_well}
\end{figure}

If we take the following  one-well value as the  initial condition:
\begin{align}
	\rho_0(x)=1-x^2,\qquad x\in[-1,1],
\end{align}
and calculate the numerical solution with the double-well potential, it will also converge to a double-well stationary state, as displayed in Figure~\ref{fig:fp_one_well_initial,double_well_potential}.
\begin{figure}[!htb]
	\centering
	\subfigure[Density, double-well potential ]{
		\includegraphics[width=0.3\textwidth]{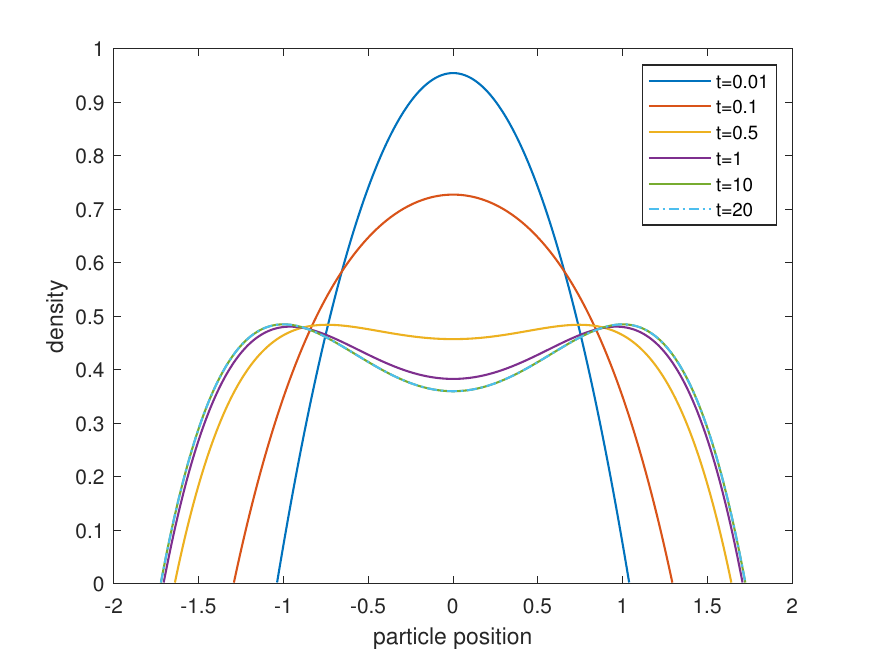}
	}
	\subfigure[Energy dissipation]{
		\includegraphics[width=0.3\textwidth]{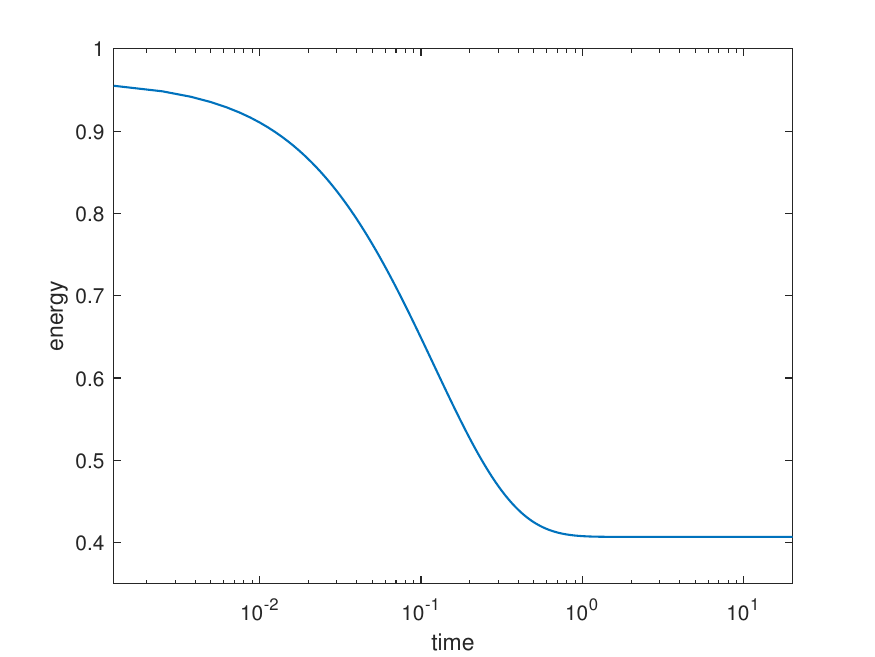}
	}
	\subfigure[Mass conversation]{
		\includegraphics[width=0.3\textwidth]{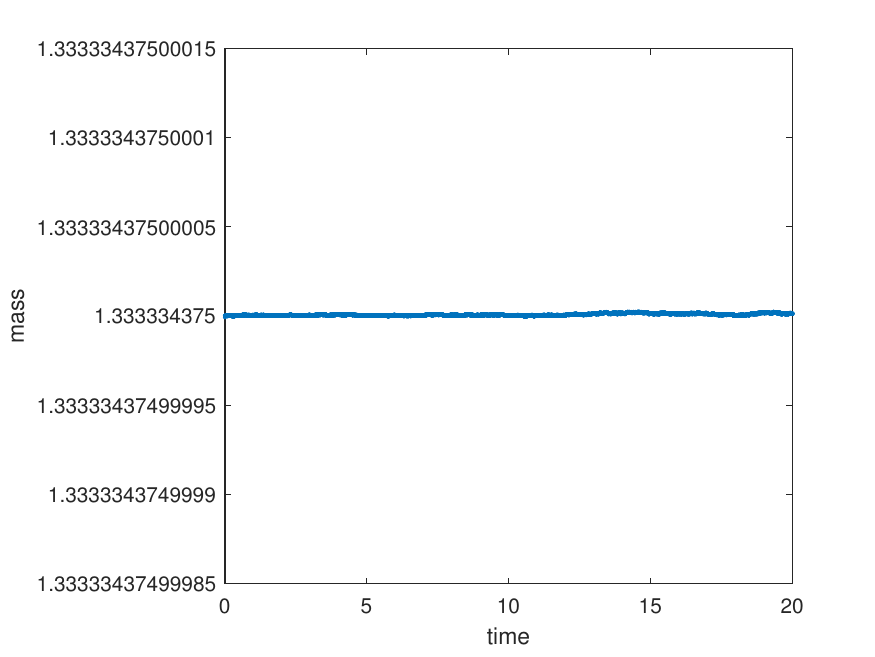}
	}

	\caption{One-well initial value for the Fokker-Planck equation with the double-well  potential, $m=2$, $M=800$, $\delta t=1/800$.}\label{fig:fp_one_well_initial,double_well_potential}
\end{figure}

{\bf Linear Fokker-Planck equation with the logarithmic potential.} 
Now, we consider the linear Fokker-Planck equation, i.e.\, taking $U(\rho)=\rho\log\rho$. 
Consider the following  energy with one-well logarithmic potential :
\begin{align*}
E(\rho)=\int_{\Omega}\rho\log\rho+\frac{|x|^2}{2}\rho\ \mathrm{d}x.
\end{align*}
Numerical experiments are implemented by using scheme \eqref{schem:1}-\eqref{schem:1-2} without regularization term, and the following initial condition is considered:
\begin{align}\label{initial:gauss}
	\rho_0(x)=\frac{C_{g}}{\sqrt{2\pi}}e^{-x^2/\sigma},\qquad x\in[-5,5],
\end{align}
with $C_{g}=\sigma=1$. The results  shown in Figure~\ref{fig:fp_log_potential} imply that numerical solution will converge to a one-well stationary state, and the curve of energy is dissipative with respective to time.
\begin{figure}[!htb]
	\centering
	\subfigure[Density ]{
		\includegraphics[width=0.3\textwidth]{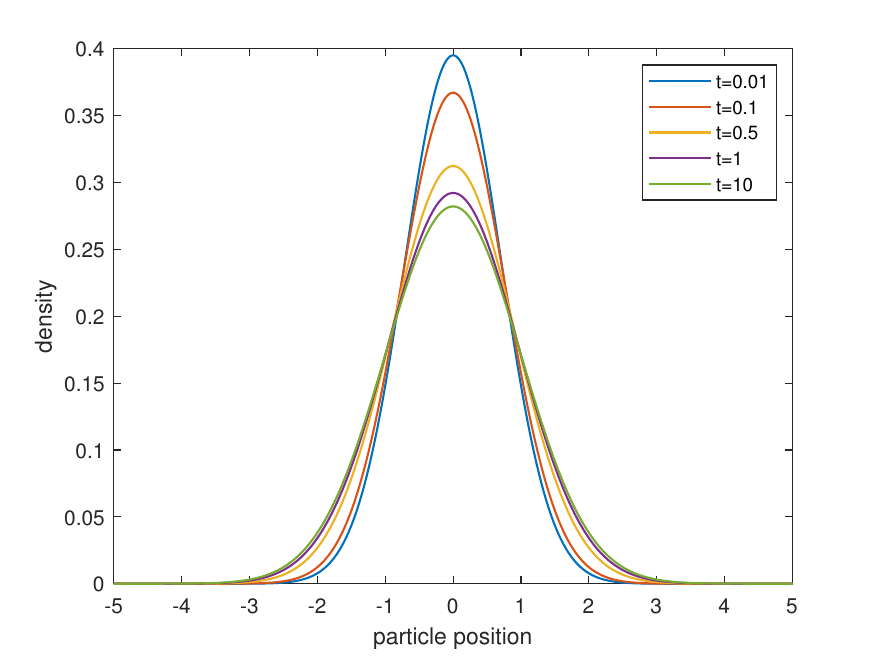}
	}
	\subfigure[Energy dissipation]{
		\includegraphics[width=0.3\textwidth]{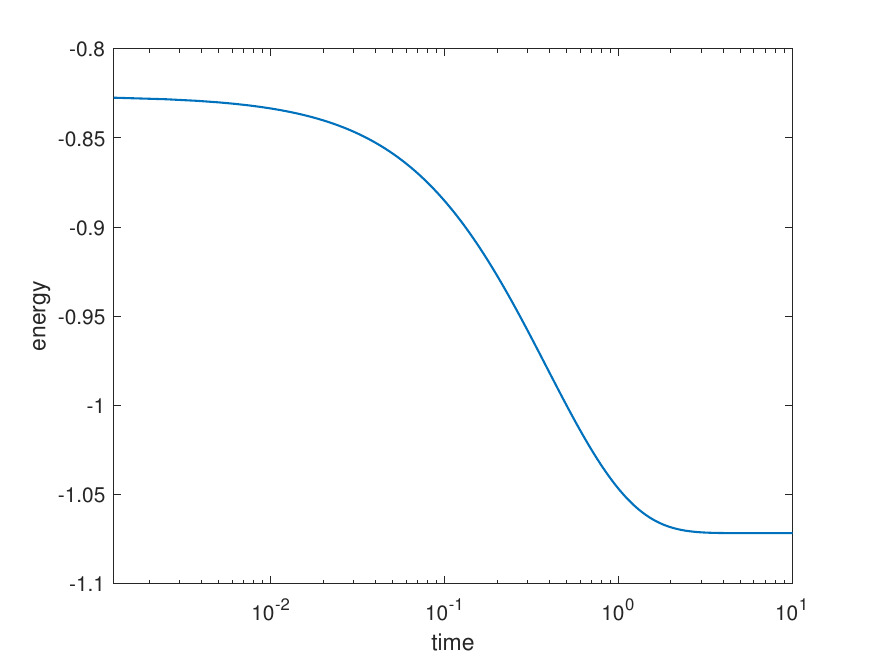}
	}
	\subfigure[Mass conservation]{
		\includegraphics[width=0.3\textwidth]{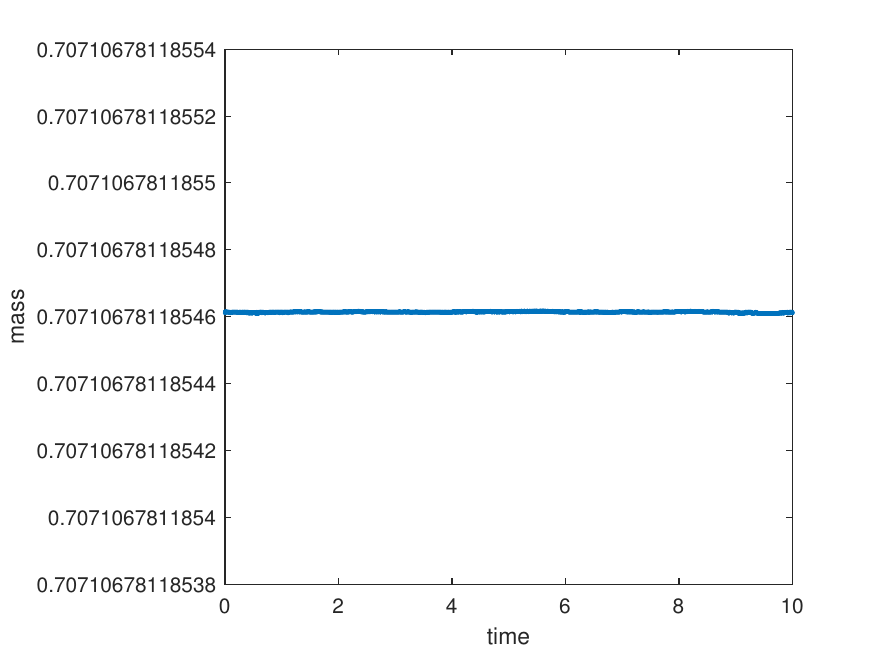}
	}
	
	\caption{One-well initial value for the Fokker-Planck equation with logarithmic  potential, $M=800$, $\delta t=1/800$.}\label{fig:fp_log_potential}
\end{figure}
\subsubsection{Aggregation equation}
Considering the aggregation equation with the energy defined by
\begin{align*}
E(\rho)=\int_{\Omega\times\Omega}W(x-y)\rho(x)\rho(y)\mathrm{d}x\mathrm{d}y,
\end{align*}
with $W(x)=\frac{|x|^2}{2}-\ln|x|$. 
We show that taking variational of $E(\rho(x))$ with respect to $x$ leads to the following equations in Eulerian coordinate:
 \begin{align*}
\frac{\delta E}{\delta x}=&\rho\nabla_xF'(\rho)=\rho\nabla_x\left(\int_{\Omega}W(x-y)\rho(y)\mathrm{d}y\right)=\rho\int_{\Omega}W'(x-y)\rho(y)\mathrm{d}y,
\end{align*}
and in Lagrangian coordinate:
\begin{align*}
\frac{\delta E}{\delta x}=\rho(X,0)\int_{\Omega}W'(x-y)\rho(y)\mathrm{d}y.
\end{align*}

Let's set $x$ to be implicit and $y$ to be explicit, details can be reached in Appendix. Taking the initial value  \eqref{initial:gauss} with $C_g=\sigma=1$, we apply scheme \eqref{schem:1}-\eqref{schem:1-2}, incorporating the regularization term $\epsilon\Delta_X x^{k+1}$, and define the last term of \eqref{schem:1} as provided in \eqref{appendix:aggre_xi_ye}, to simulate the numerical experiments. 
 The density plot at $t=10$ and energy plot with different regularization parameter $\epsilon$ are shown in Figure~\ref{fig:aggre_epsil}. It can be observed that the density achieves the equilibrium state \cite{carrillo2012mass}
\begin{align}\label{eq:infty}
\rho_{\infty}=\frac{C_{ag}}{\pi}\sqrt{(2-x^2)_+},
\end{align}
 as $\epsilon$ decreases, where $C_{ag}=1/\sqrt{2}$ is determined by the property of  mass conservation. 
 
Now we use scheme \eqref{schem:1}-\eqref{schem:1-2} with the regularization term $\epsilon\Delta_X x^{k+1}$, $\epsilon=10^{-4}\delta t$ to make numerical experiments by choosing the initial value \eqref{initial:gauss} with $\sigma=C_g=1$. As shown in Figure~\ref{fig:aggre1,0.5}, the solution will converge to the equilibrium state. If we take the initial value \eqref{initial:gauss} with $\sigma=0.1$, $C_{g}=1$, it will converge to $\rho_{\infty}$ \eqref{eq:infty} with $C_{ag}=\sqrt{0.1/2}$ where the numerical results are displayed in Figure~\ref{fig:aggre0.1}. 
Both results indicates that the numerical solution will converge to a stationary state  which is consistent with the theoretical result, and the proposed scheme is mass conserving and energy dissipating. As shown in diagram (d) in Figure~\ref{fig:aggre1,0.5} and Figure~\ref{fig:aggre0.1}, the scaling law of the relative energy  is also verified with theoretical results.

\begin{figure}[!htb]
	\centering
	\subfigure[Density at $t=10$]{
		\includegraphics[width=0.45\textwidth]{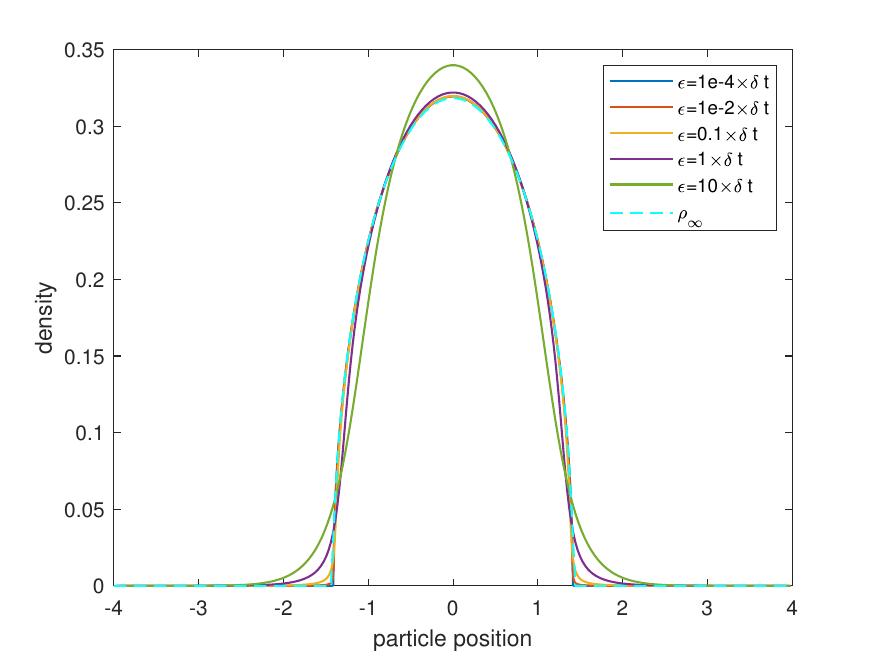}
	}
	\subfigure[Energy]{
		\includegraphics[width=0.45\textwidth]{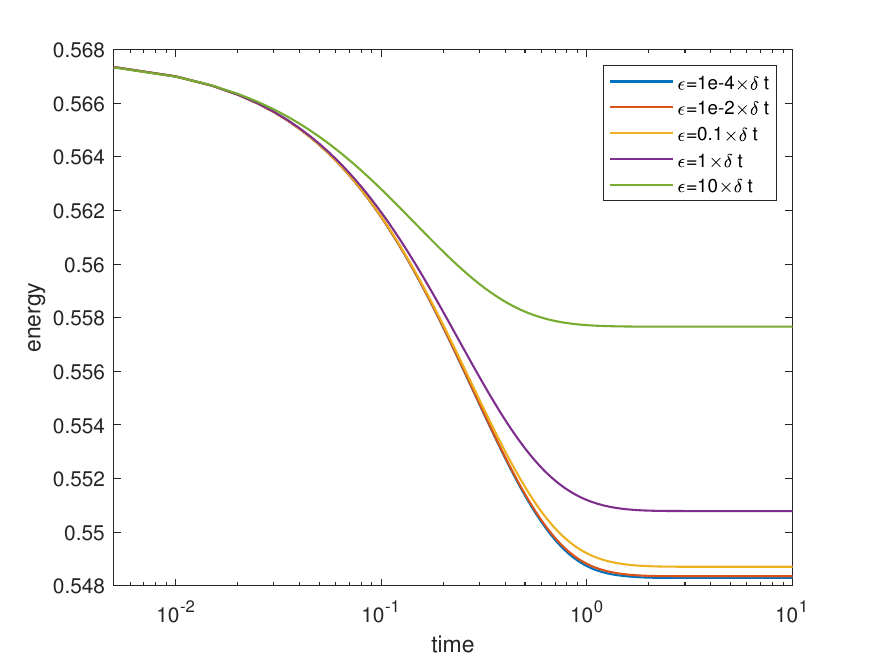}
	}
	\caption{Aggregation equation solved by \eqref{schem:1}-\eqref{schem:1-2} with the regularization term $\epsilon\Delta_X x^{k+1}$, the last term of \eqref{schem:1} is defined by \eqref{appendix:aggre_xi_ye}. The initial value \eqref{initial:gauss}, $C_g=\sigma=1$, $N=200$, $\delta t=1/200$.}\label{fig:aggre_epsil}
\end{figure}
\begin{figure}[!htb]
	\centering
	\subfigure[Density ]{
		\includegraphics[width=0.45\textwidth]{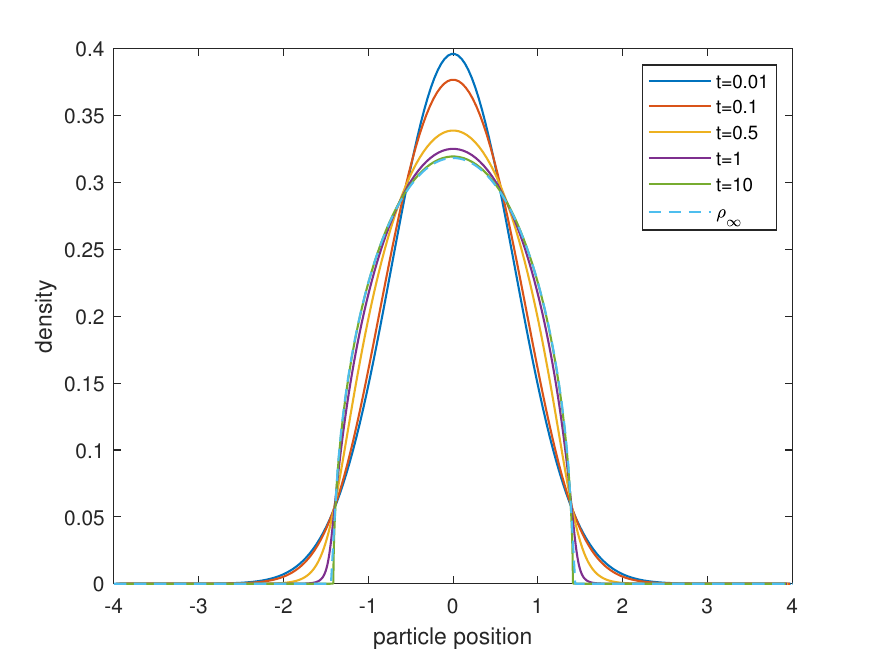}
	}
\subfigure[Mass conservation]{
	\includegraphics[width=0.45\textwidth]{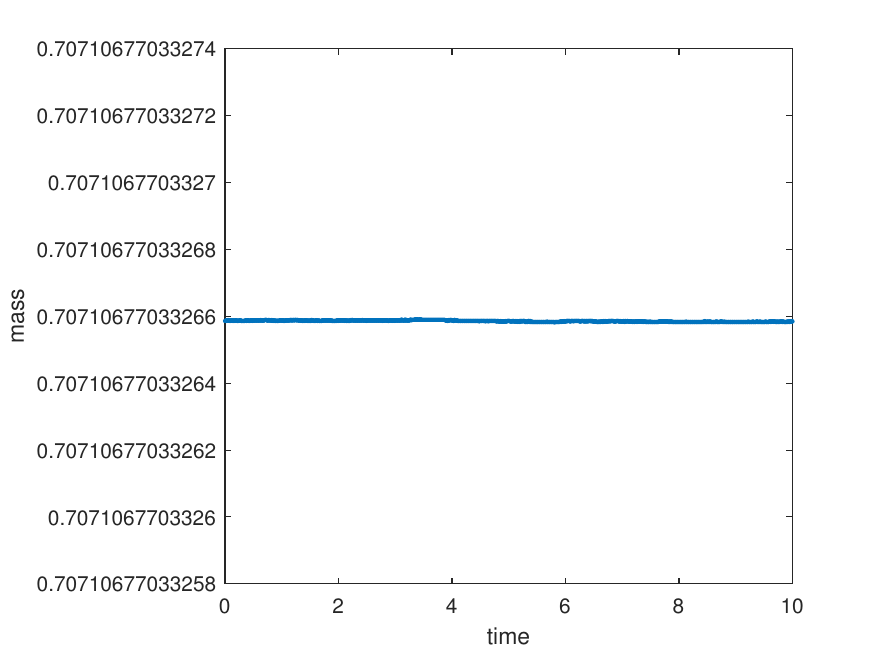}
}	
	\subfigure[Energy dissipation]{
	\includegraphics[width=0.45\textwidth]{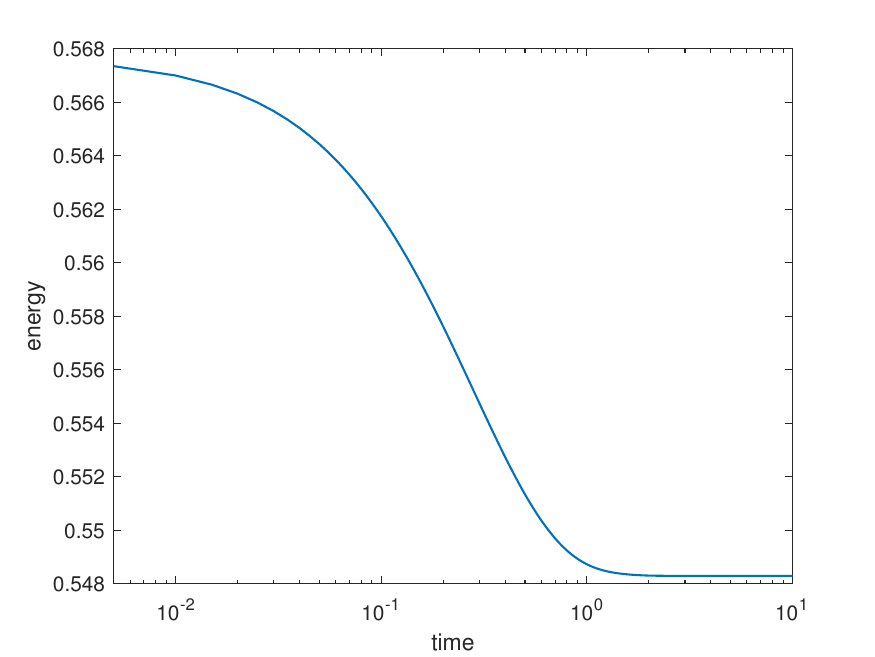}
}
	\subfigure[Relative energy]{
	\includegraphics[width=0.45\textwidth]{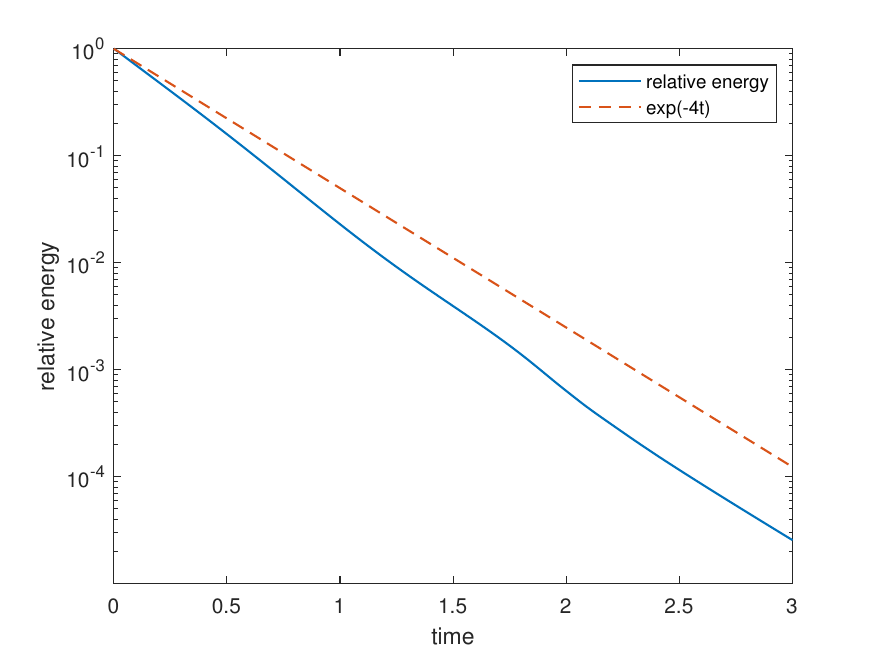}
}
	\caption{Aggregation equation solved by  \eqref{schem:1}-\eqref{schem:1-2} with the last term defined by \eqref{appendix:aggre_xi_ye}, the regularization term $\epsilon\Delta_X x^{k+1}$, $\epsilon=10^{-4}\delta t$. The initial value  \eqref{initial:gauss}, $C_g=\sigma=1$, $N=200$, $\delta t=1/200$.}\label{fig:aggre1,0.5}
\end{figure}

\begin{figure}[!htb]
	\centering
	\subfigure[Density ]{
		\includegraphics[width=0.45\textwidth]{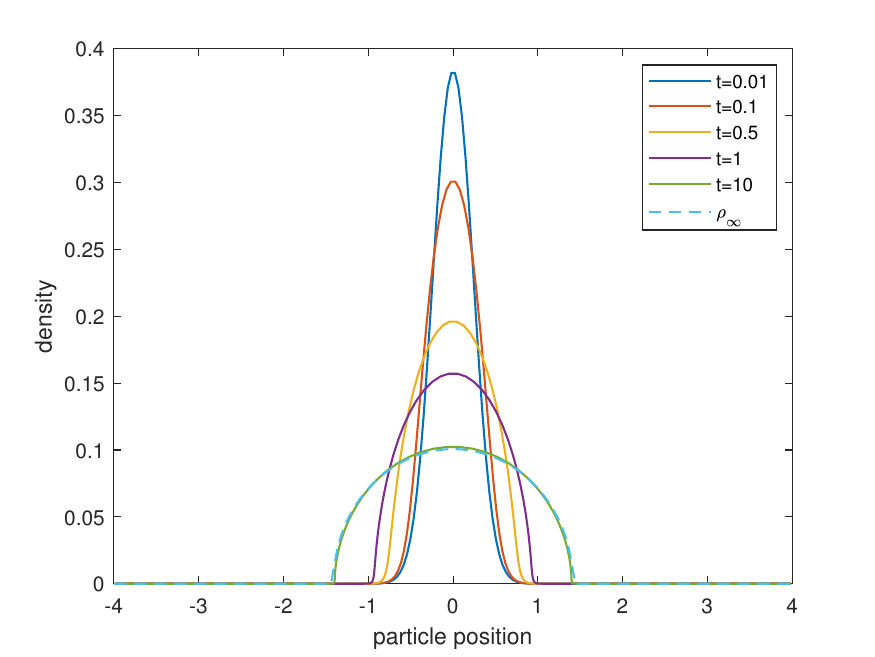}
	}
	\subfigure[Mass conservation]{
		\includegraphics[width=0.45\textwidth]{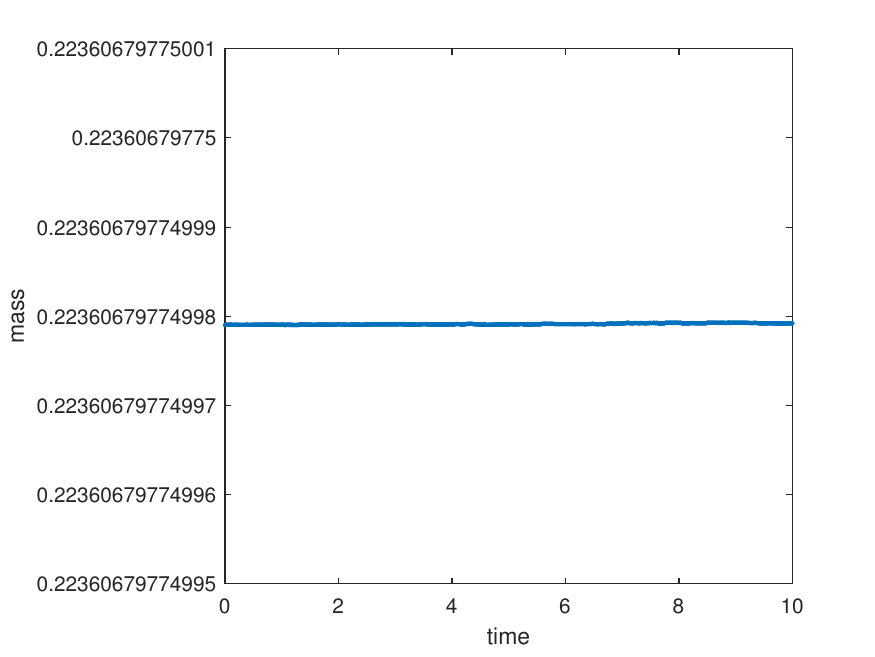}
	}	
	\subfigure[Energy dissipation]{
		\includegraphics[width=0.45\textwidth]{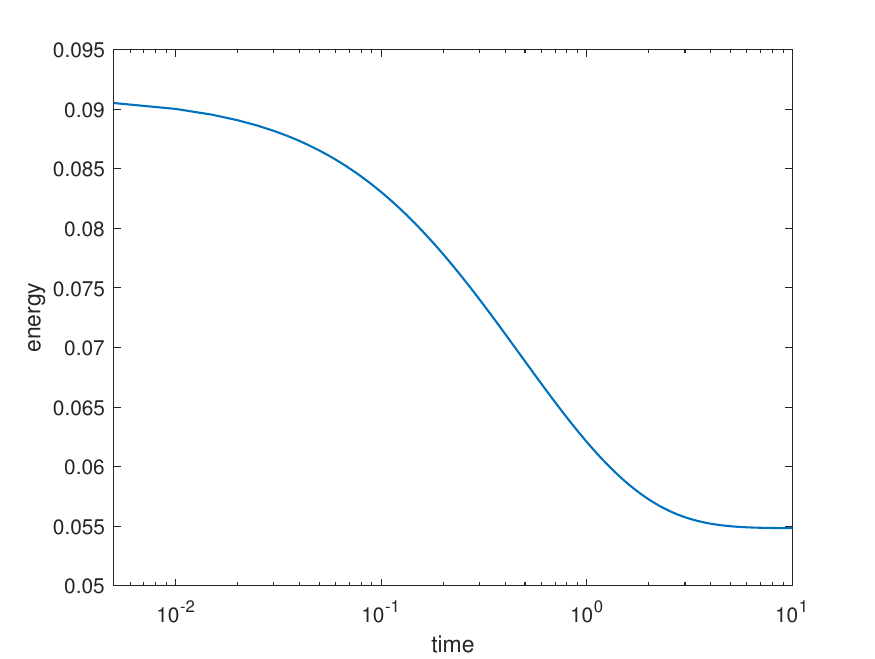}
	}
	\subfigure[Relative energy]{
		\includegraphics[width=0.45\textwidth]{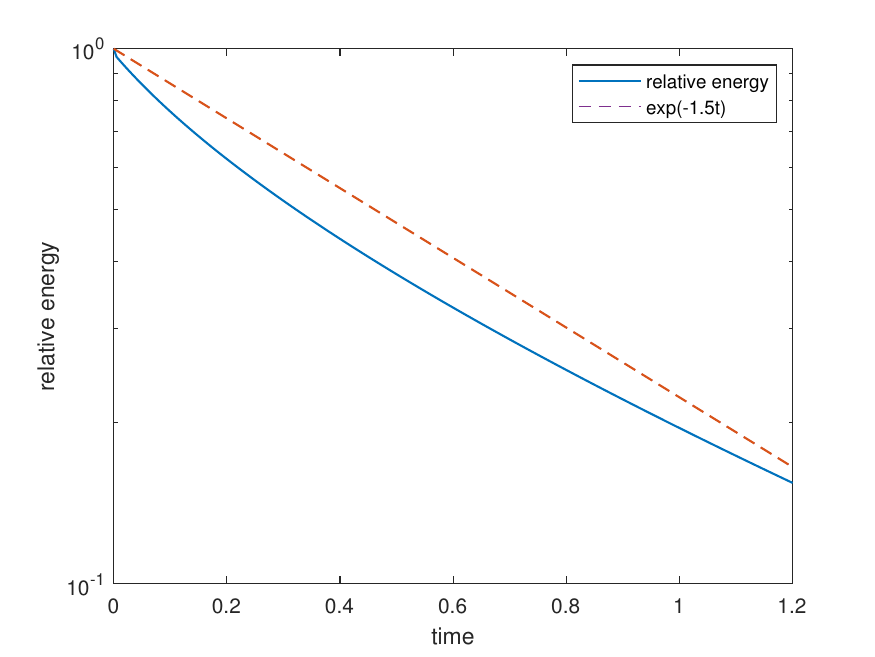}
	}
	\caption{Aggregation equation solved by \eqref{schem:1}-\eqref{schem:1-2} with the last term defined by \eqref{appendix:aggre_xi_ye}, the regularization term $\epsilon\Delta_X x^{k+1}$, $\epsilon=10^{-4}\delta t$. The initial value \eqref{initial:gauss}, $C_g=1$, $\sigma=0.1$, $N=200$, $\delta t=1/200$.}\label{fig:aggre0.1}
\end{figure}

For the fully explicit numerical scheme, the corresponding modified discrete energy can be taken by $\hat{E}_h^{k+1}=\sum_{j=0}^{N-1}\frac{\delta E_h}{\delta x_j}({\bm x}^k)(x_j^{k+1}-x_j^k)+E_h({\bm x}^k)$, then the last term in the numerical scheme \eqref{schem:1} will be taken as $\frac{\delta E_h}{\delta x_j}({\bm x}^k)$. Now we make the numerical experiments by using \eqref{schem:1}-\eqref{schem:1-2} with the regularization term $\epsilon\Delta _X x^{k+1}$,  $\epsilon=10^{-2}\delta t$ where numerical results are shown in Figure~\ref{fig:aggre_epsil_1e-2}. It can be observed that  the numerical solution  also converges to stationary state, and the total mass is preserved well.
\begin{figure}[!htb]
	\centering
	\subfigure[Density ]{
		\includegraphics[width=0.3\textwidth]{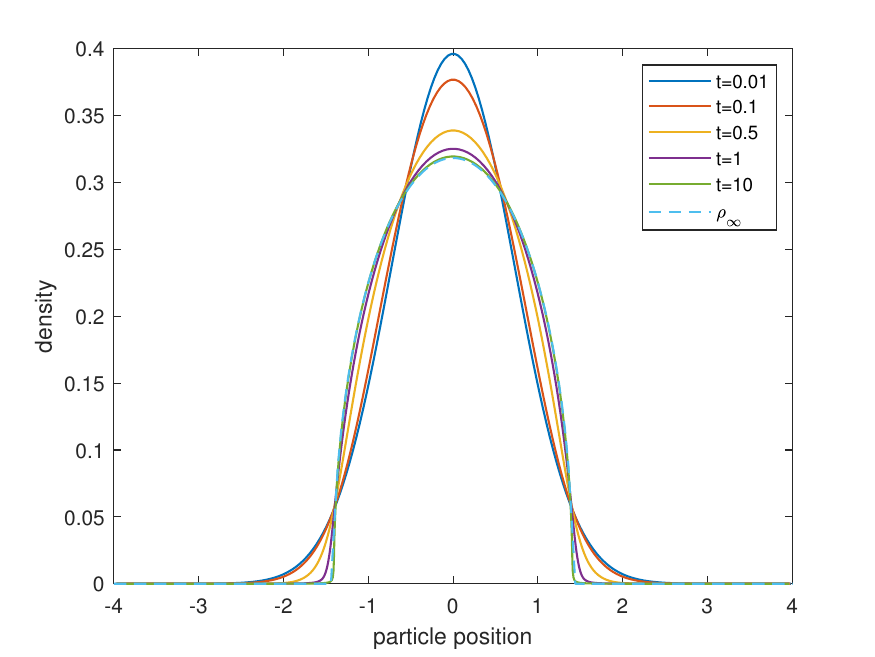}
	}
	\subfigure[Energy dissipation]{
	\includegraphics[width=0.3\textwidth]{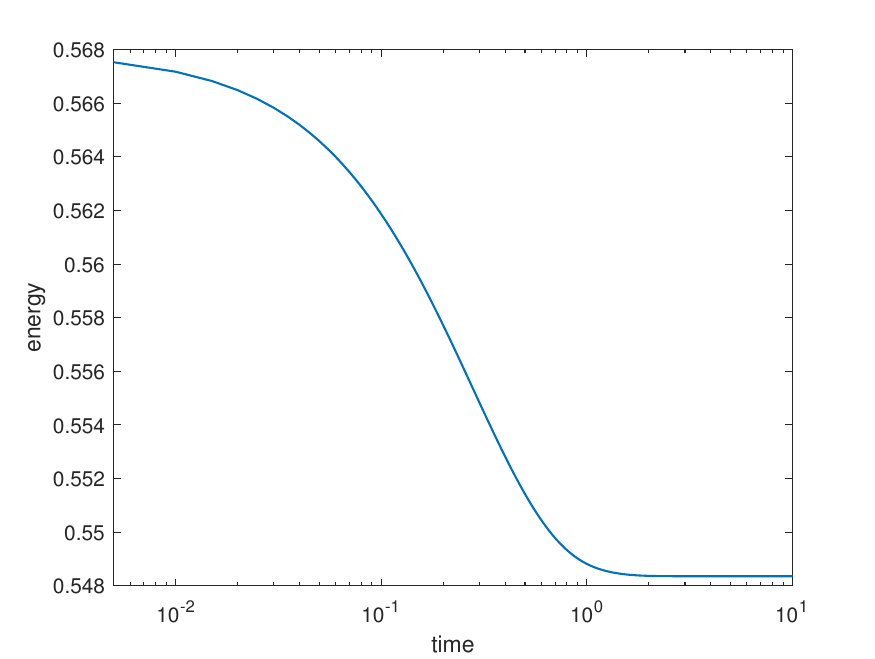}
}
	\subfigure[Mass conservation]{
		\includegraphics[width=0.3\textwidth]{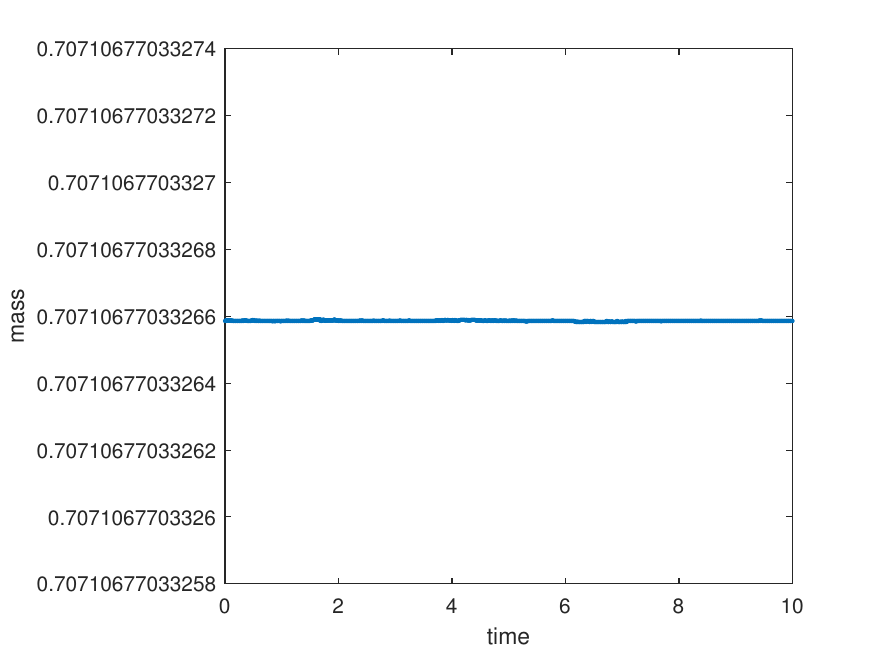}
	}	
	\caption{Aggregation equation solved by fully explicit scheme with the regularization term $\epsilon\Delta_X x^{k+1}$, $\epsilon=10^{-2}\delta t$, the initial value is taken as \eqref{initial:gauss} with $C_g=\sigma=1$, $M=200$, $\delta t=1/200$.}\label{fig:aggre_epsil_1e-2}
\end{figure}

If we set $x$ to be explicit and $y$ to be implicit in the numerical scheme, details can be found in Appendix. 
 We use scheme \eqref{schem:1}-\eqref{schem:1-2}, in which the last term is adjusted according to \eqref{appendix:aggre_xe_yi}, and the regularization term $\epsilon\Delta_X x^{k+1}$, $\epsilon=10^{-4}\delta t$ to conduct numerical experiments, the numerical results can be found in Figure~\ref{fig:aggre_x_explicit_y_implicit} which show that the scheme preserves total mass, and the numerical solution will converge to its stationary state. The energy is also dissipative with $E_h({\bm x}^{k+1})$ defined by \eqref{eq:energy} in Appendix.
\begin{figure}[!htb]
	\centering
	\subfigure[Density ]{
		\includegraphics[width=0.3\textwidth]{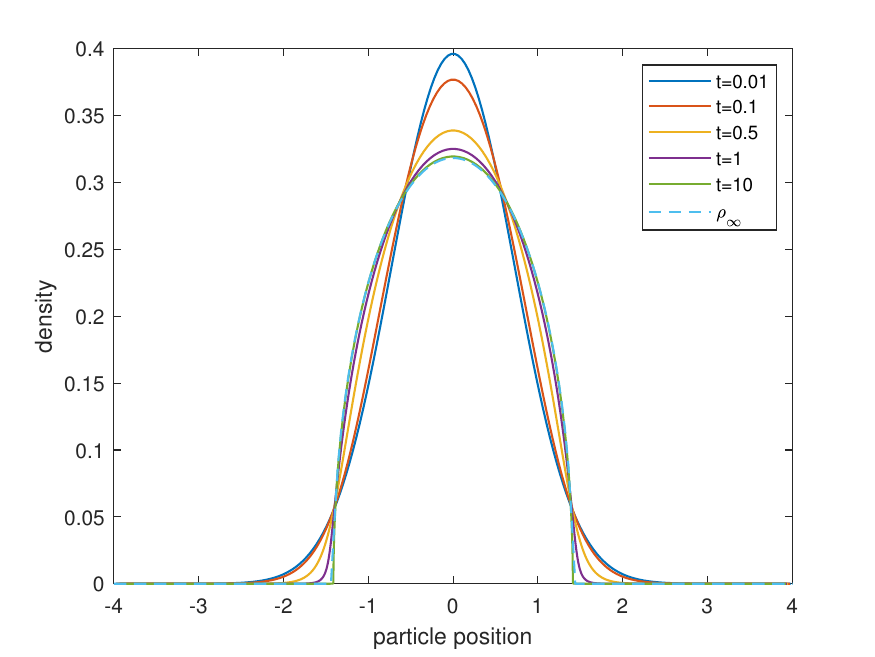}
	}
		\subfigure[Energy dissipation]{
	\includegraphics[width=0.3\textwidth]{energy_aggre_x_im_epsil_1e-4-eps-converted-to.pdf}
}
	\subfigure[Mass conservation]{
		\includegraphics[width=0.3\textwidth]{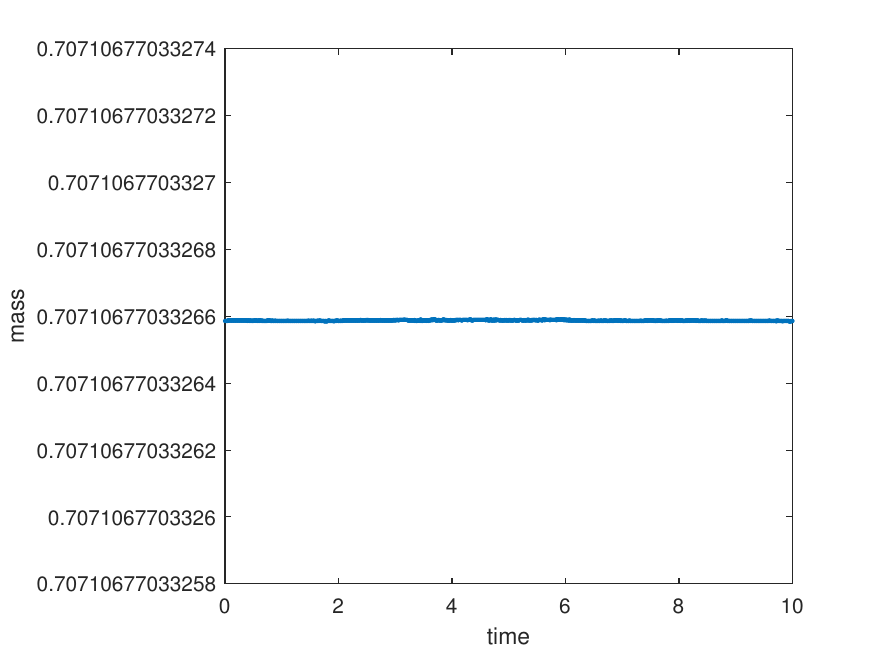}
	}	
	\caption{Aggregation equation solved by  \eqref{schem:1}-\eqref{schem:1-2} with the last term modified by \eqref{appendix:aggre_xe_yi}, the regularization term $\epsilon\Delta_X x^{k+1}$, $\epsilon=10^{-4}\delta t$. The initial value \eqref{initial:gauss}, $C_g=\sigma=1$, $N=200$, $\delta t=1/200$.}\label{fig:aggre_x_explicit_y_implicit}
\end{figure}

 We set $x$ and $y$  both implicit in numerical scheme \eqref{schem:1}-\eqref{schem:1-2}, and implement numerical experiments by using scheme \eqref{schem:1}-\eqref{schem:1-2} with the last term defined by \eqref{appendix:aggre_xi_yi}, and the regularization term $\epsilon\Delta_X x^{k+1}$, $\epsilon=10^{-2}\delta t$,	the numerical results are displayed in Figure~\ref{fig:fullly implicit} which implies that the proposed numerical scheme is also stable with $E_h({\bm x}^{k+1})$ defined  by \eqref{eq:energy} in Appendix.
\begin{figure}[!htb]
	\centering
	\subfigure[Density ]{
		\includegraphics[width=0.3\textwidth]{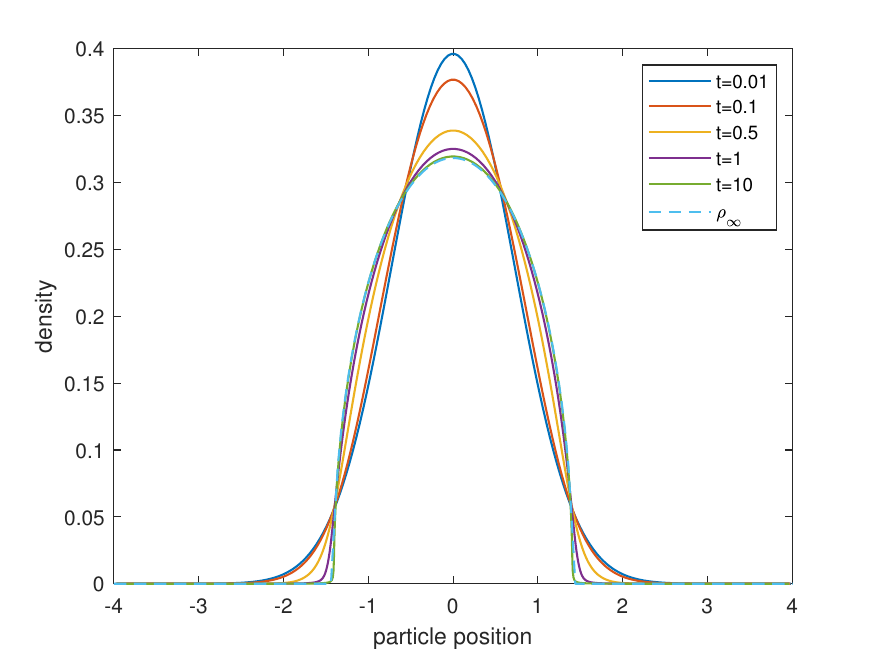}
	}
	\subfigure[Energy dissipation]{
		\includegraphics[width=0.3\textwidth]{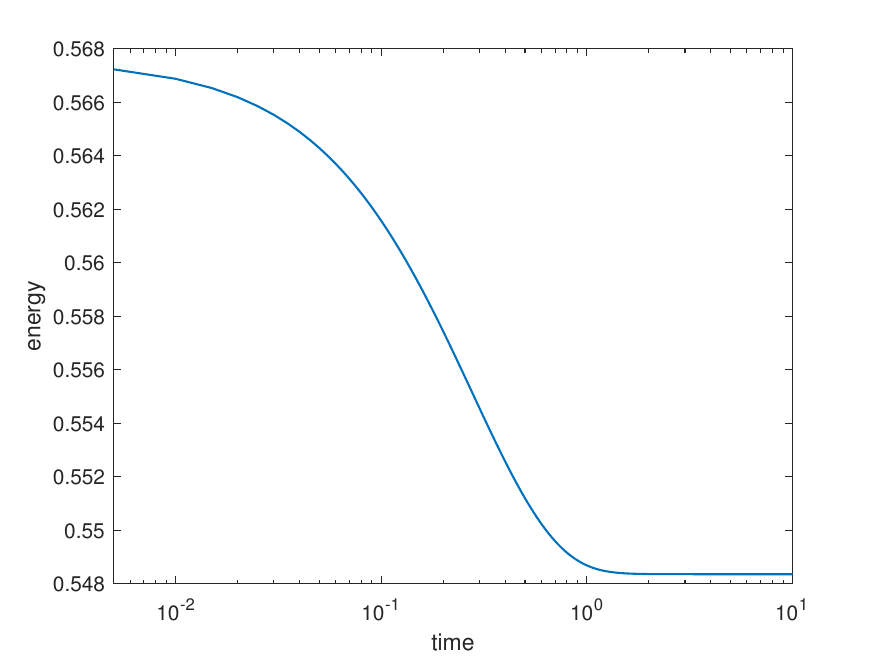}
	}
	\subfigure[Mass conservation]{
		\includegraphics[width=0.3\textwidth]{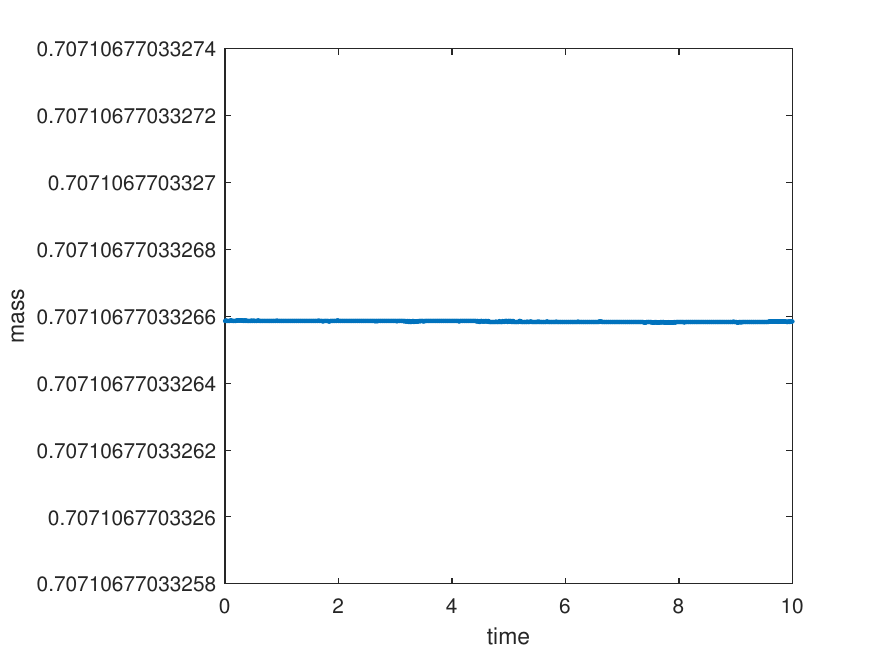}
	}	
	\caption{Aggregation equation solved by \eqref{schem:1}-\eqref{schem:1-2} with the last term defined by \eqref{appendix:aggre_xi_yi}, the regularization term $\epsilon\Delta_X x^{k+1}$, $\epsilon=10^{-2}\delta t$. The initial value \eqref{initial:gauss}, $C_g=\sigma=1$, $M=200$, $\delta t=1/200$.}\label{fig:fullly implicit}
\end{figure}

\subsubsection{Keller-Segel model}
Consider the Keller-Segel model \cite{horstmann20031970}  with the following energy:
\begin{align*}
	E(\rho)=\int_{\Omega}U(\rho(x))+\rho V(x)\mathrm{d}x+\int_{\Omega\times\Omega}W(x-y)\rho(x)\rho(y)\mathrm{d}x\mathrm{d}y,
\end{align*}
where $U=\rho\log\rho$, $V(x)=0$ and $W(x)=\frac{1}{2\pi}\ln|x|$. Taking variational derivative with respective to $x$, we have
\begin{align}\label{der}
	\frac{\delta E}{\delta x}
	=&\rho\nabla_xU'(\rho(x))+\rho\int_{\Omega}W'(x-y)\rho(y)\mathrm{d}y.
\end{align}
Rewriting \eqref{der}  into Lagrangian coordinate:
\begin{align}
\frac{\delta E}{\delta x}=\rho(X,0)\left(\frac{\partial_XU'(\frac{\rho(X,0)}{\partial_Xx})}{\partial_Xx}+\int_{\Omega}W'(x-y)\rho(Y,0)\mathrm{d}Y\right).
\end{align}

Consider the following initial value with one well :
\begin{align}\label{initial:ks}
    \rho_0(x)=\frac{C_{ks}}{\sqrt{2\pi}}e^{-x^2/2}+10^{-8},\qquad  x\in[-15,15],
 \end{align} 
 and the  initial condition with double well:
 \begin{align}\label{initial:ks_double_well}
 \rho_0(x)=\frac{C_{ks}}{\sqrt{\pi}}(e^{-4(x+2)^2}+e^{-4(x-2)^2})+10^{-8},\qquad x\in[-15,15],
 \end{align}
 combined with the Dirichlet boundary condition $x|_{\partial\Omega}=X|_{\partial\Omega}$. 
 We simulate numerical experiments by using scheme \eqref{schem:1}-\eqref{schem:1-2} without regularization term, and the last term of \eqref{schem:1} defined by \eqref{appendix:ks}. 
 
 Taking $C_{ks}=1$ in  \eqref{initial:ks} and  \eqref{initial:ks_double_well}  to implement numerical experiments, respectively.  As shown in Figure~\ref{fig:ks} and Figure~\ref{fig:ks2}, it can be observed that numerical solutions remain  to be positivity and bounded for all time. 
 If we choose $C_{ks}=5\pi$  to make  numerical simulations,  solutions also remain to be  positivity. Finally, we observe that numerical solutions  blow up in finite time shown in Figure~\ref{fig:ks} and Figure~\ref{fig:ks2}.
\begin{figure}[!htb]
	\centering
	\subfigure[Density with $C_{ks}=1$ ]{
		\includegraphics[width=0.45\textwidth]{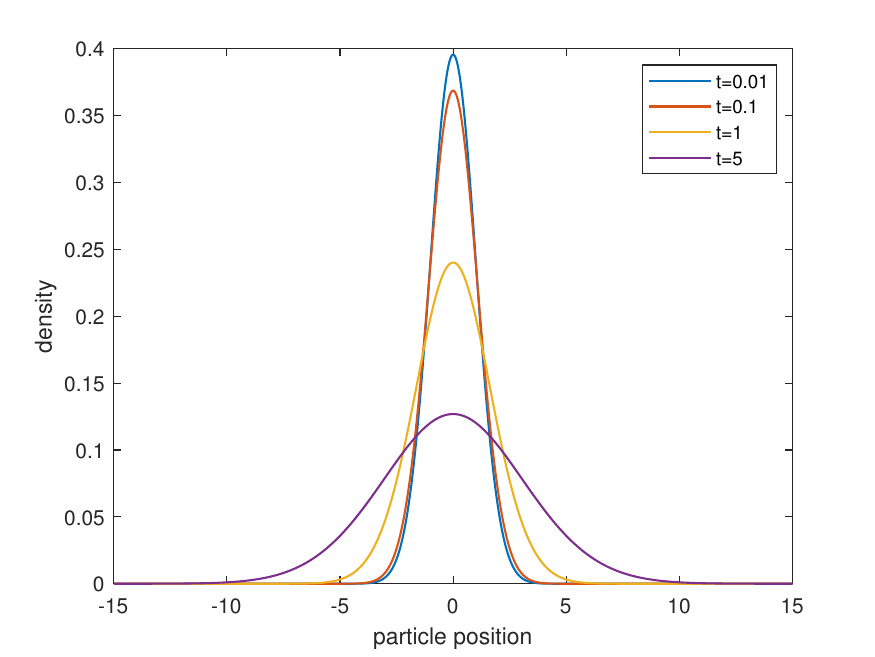}
	}
	\subfigure[Density with $C_{ks}=5\pi$ ]{
	\includegraphics[width=0.45\textwidth]{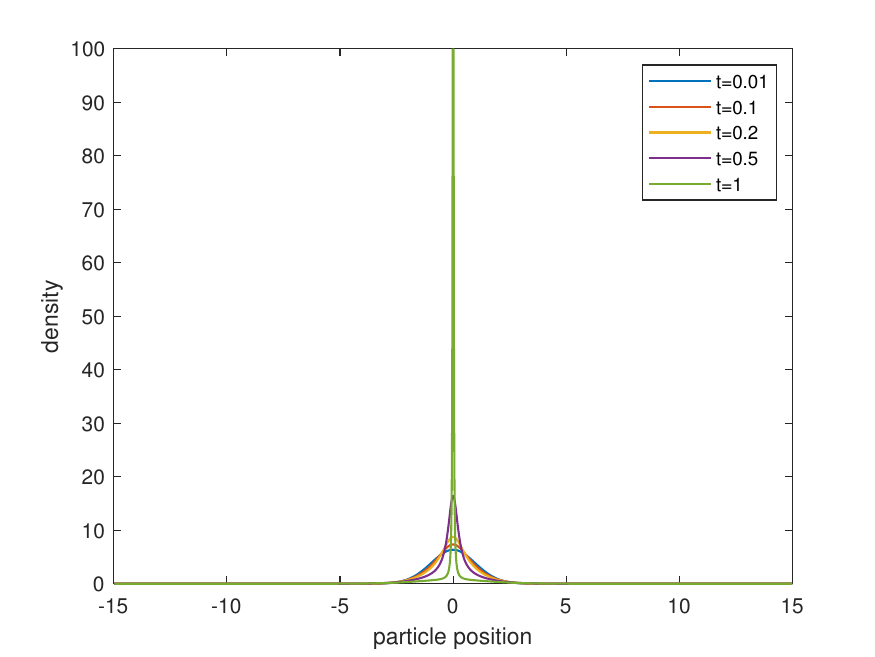}
}
	\caption{Keller-Segel model solved by  \eqref{schem:1}-\eqref{schem:1-2} without regularization term, the last term of \eqref{schem:1} is defined by \eqref{appendix:ks}. The initial value  \eqref{initial:ks} with $C_{ks}=1$ and $5\pi$,  $M=800$, $\delta t=1/800$.}\label{fig:ks}
\end{figure}

\begin{figure}[!htb]
		\centering
		\subfigure[Density  with $C_{ks}=1$]{
			\includegraphics[width=0.45\textwidth]{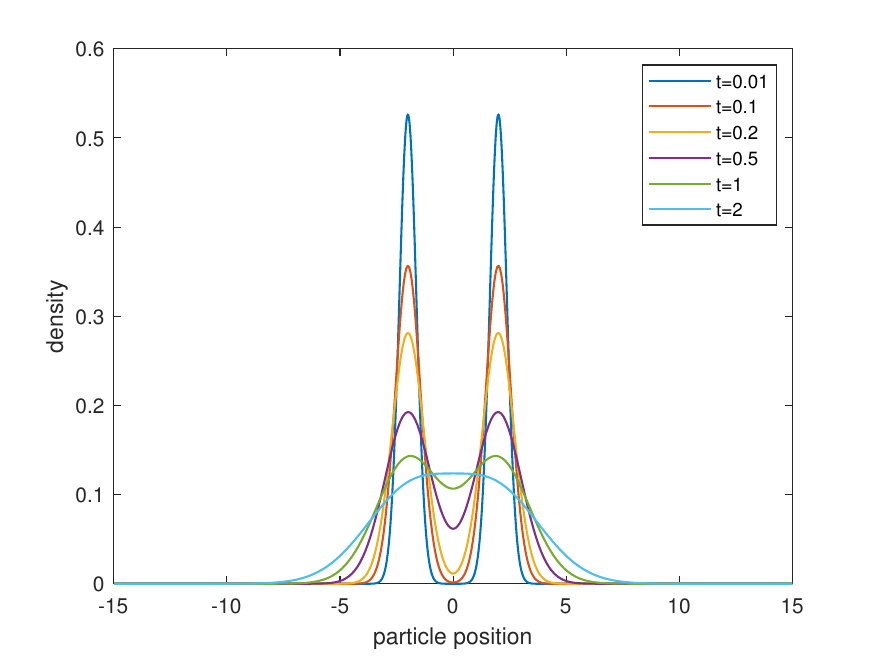}
		}
		\subfigure[Density  with $C_{ks}=5\pi$]{
		\includegraphics[width=0.45\textwidth]{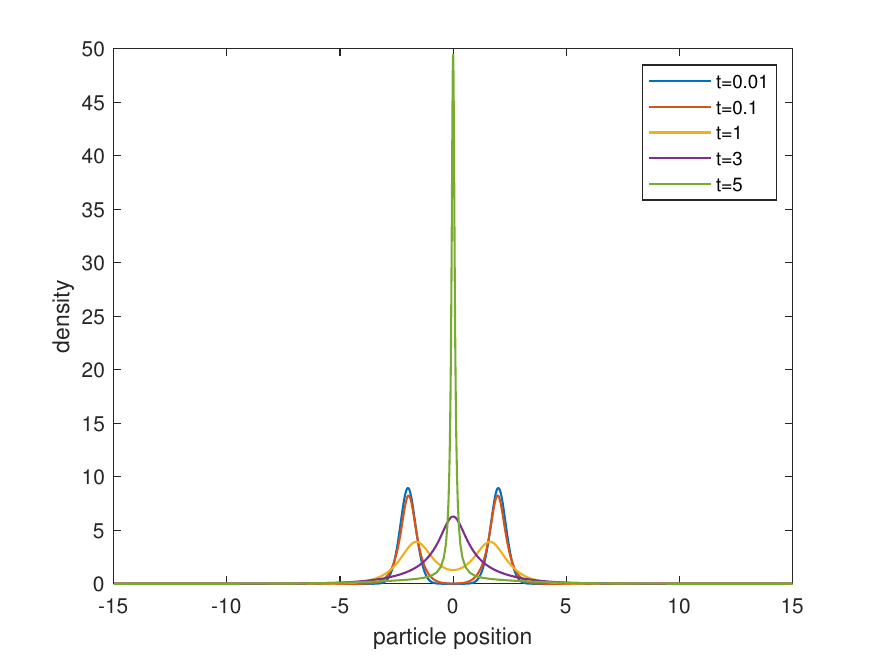}
	}
	\caption{Keller-Segel model solved by  \eqref{schem:1}-\eqref{schem:1-2} without  regularization term, the last term of \eqref{schem:1} is defined by \eqref{appendix:ks}. The double-well initial value \eqref{initial:ks_double_well}, $C_{ks}=1$ and $5\pi$, $M=800$, $\delta t=1/800$.}\label{fig:ks2}
\end{figure}

\subsection{Two dimension}\label{sec:num in 2d}
In this subsection, we focus on numerical simulations in 2D. For simplicity, to avoid solving  nonlinear equations,  we only  validate the accuracy and efficiency for explicit numerical schemes \eqref{ex:1}-\eqref{ex:2} proposed in Section \ref{sec:two dim}. We also take the Porous-Medium equation, Keller-Segel equation and Aggregation equation as examples. 
\subsubsection{Porous-Medium equation}
The 2-D Barenblatt solution takes on the following form: 
\begin{align}
	B_{m,2}(\bm{x},t)=\left(C_{B2}-\frac{\kappa(m-1)}{4m}\frac{|\bm{x}|^2}{(t+1)^{\kappa}}\right)_{+}^{1/(m-1)},
\end{align}
where $\kappa=1/m$, and $C_{B2}$ is a positive constant. The solution has a compact support $|{\bm x}|\le \xi_m(t)$ for any finite time with
\begin{align}\label{pme:2d support set}
	\xi_m(t)=\sqrt{\frac{4mC_{B2}}{\kappa(m-1)}}(t+1)^{\kappa/2}.
\end{align}
Now, we take the Barenblatt solution with $C_{B2}=0.1$ as initial condition to  make numerical experiments by using scheme \eqref{ex:1}-\eqref{ex:2} with the regularization term $\epsilon\Delta_{\bm X}{\bm x}^{k+1}$, $\epsilon=10^{-3}\delta t$ for $m=2$, $\epsilon=10^{-1}\delta t$ for $m=5$. 
The evolution of the numerical solution is shown in Figure~\ref{fig:pme,2} and Figure~\ref{fig:pme,5} for $m=2$, $m=5$, respectively, it can be found that  the profile of density changes as the trajectory moves outward,  and the free boundaries move at a finite speed. The plots of the absolute error between the numerical solution and the exact solution are also displayed, it can be observed that the main part of the error is around the free boundaries.

The trajectories at various time are shown in Figure~\ref{fig:pme,22} and Figure~\ref{fig:pme,52} for $m=2$ and $m=5$, respectively, which implies that the tendency of the numerical interface is consistent with the exact one calculated by \eqref{pme:2d support set}.
\begin{figure}[!htbp]
	\centering
	\subfigure[Numerical solution at $t=1$ ]{
		\includegraphics[width=0.3\textwidth]{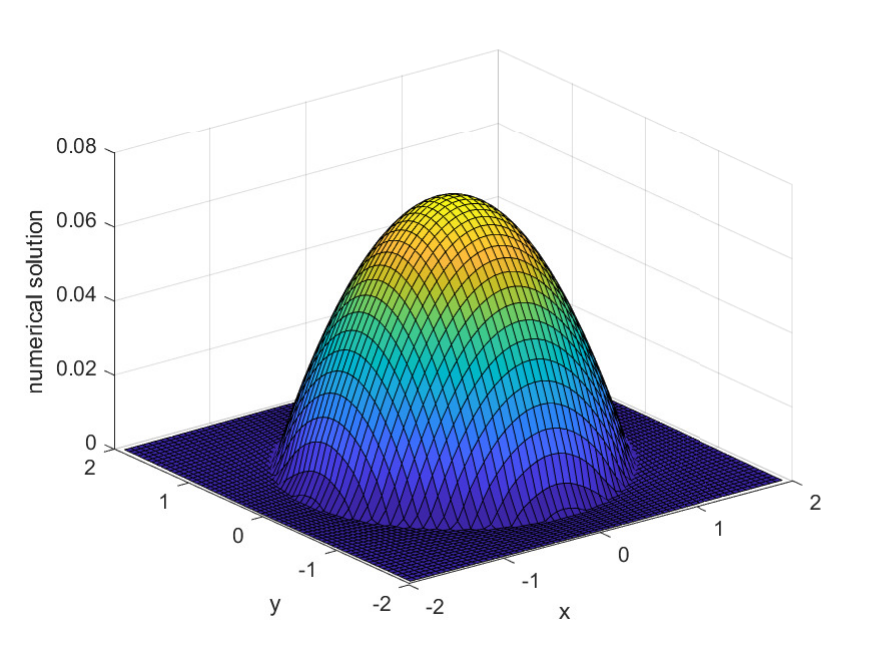}
	}
	\subfigure[Numerical solution at $t=1$]{
		\includegraphics[width=0.3\textwidth]{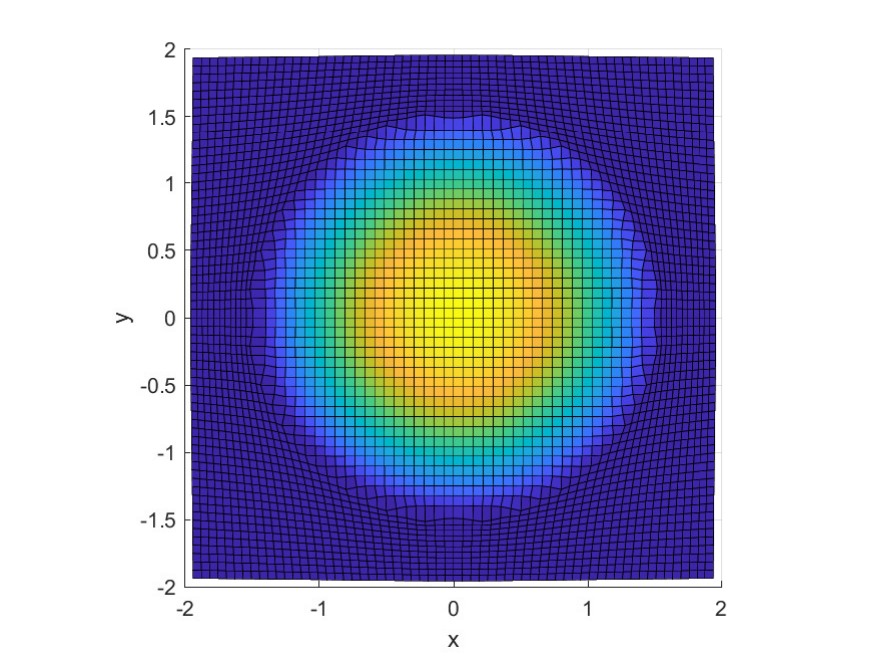}
	}
	\subfigure[$|B_{2,2}({\bm x},t)-\rho_h|$ at $t=1$]{
	\includegraphics[width=0.3\textwidth]{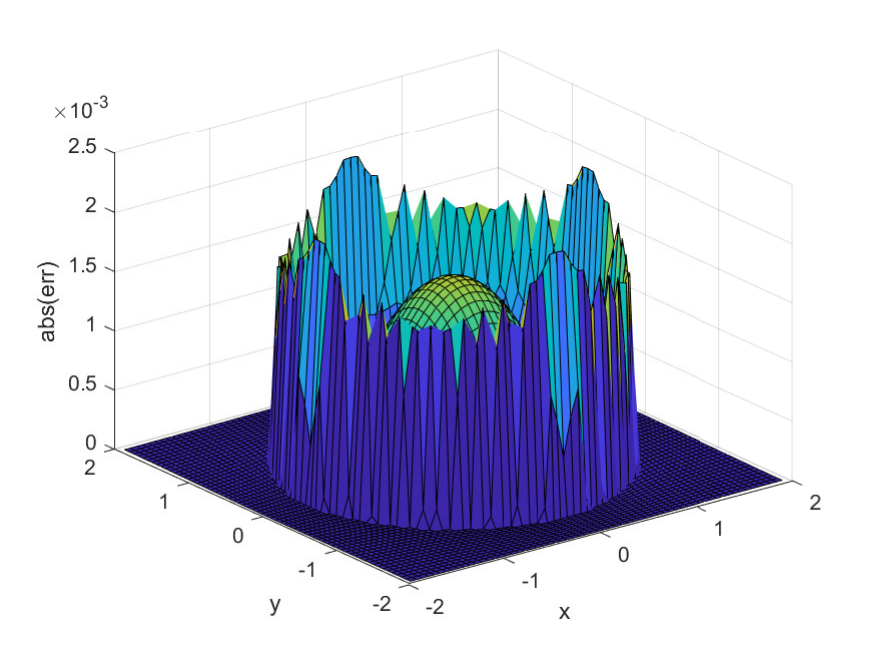}
}
	\subfigure[Numerical solution at $t=2$ ]{
		\includegraphics[width=0.3\textwidth]{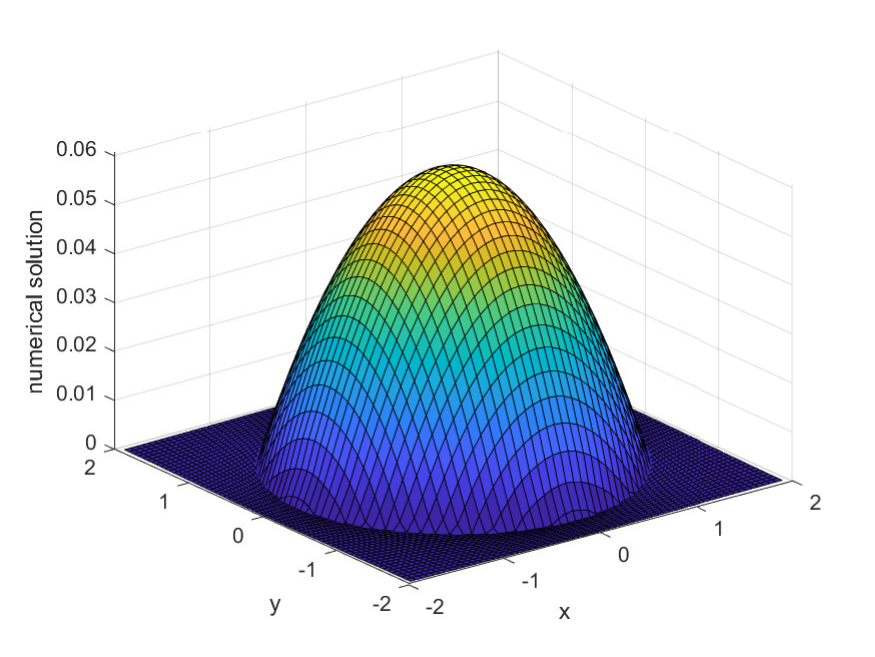}
	}
	\subfigure[Numerical solution at $t=2$]{
		\includegraphics[width=0.3\textwidth]{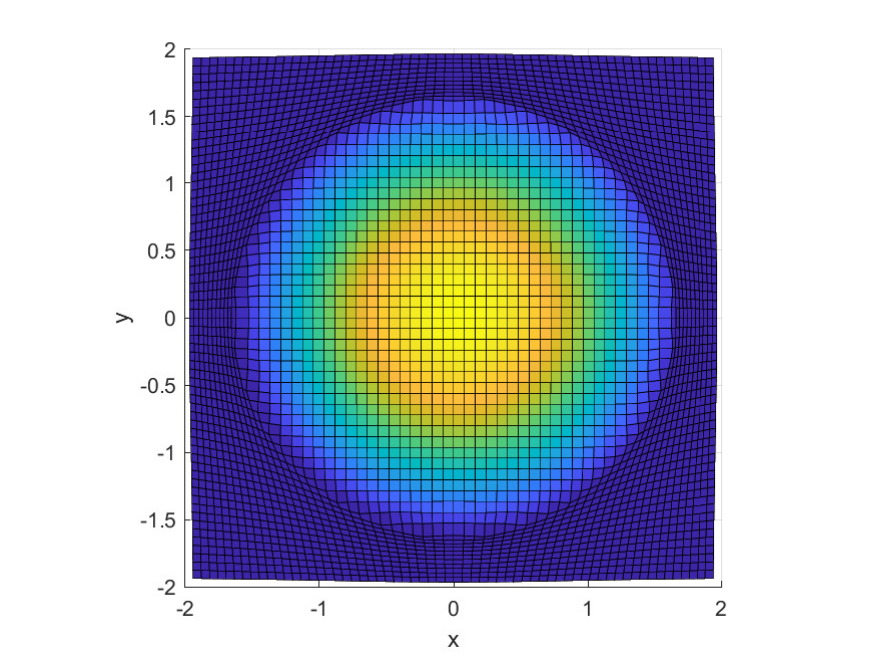}
	}
	\subfigure[$|B_{2,2}({\bm x},t)-\rho_h|$ at $t=2$]{
	\includegraphics[width=0.3\textwidth]{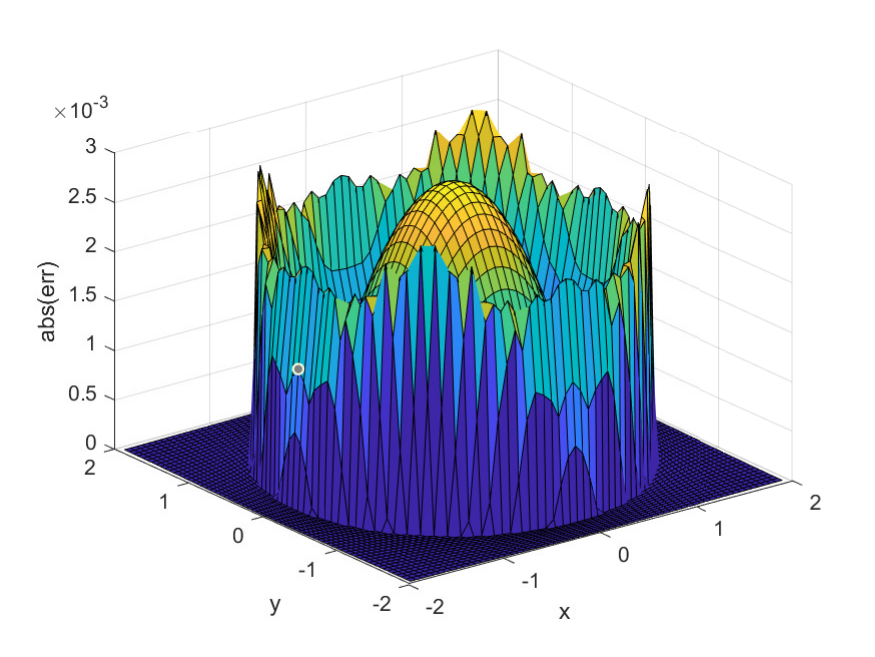}
}
	\subfigure[Numerical solution at $t=4$]{
		\includegraphics[width=0.3\textwidth]{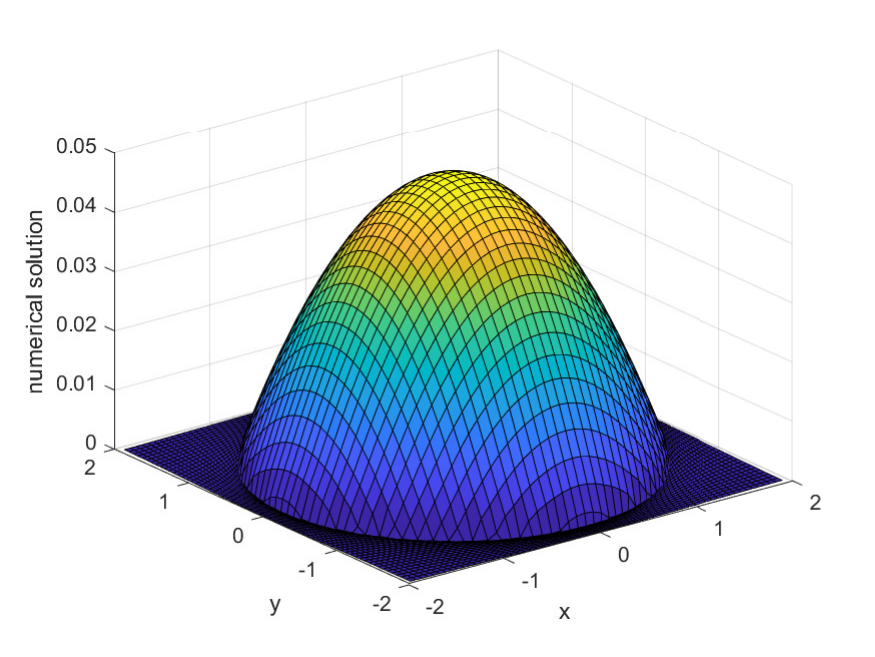}
	}
	\subfigure[Numerical solution at $t=4$]{
		\includegraphics[width=0.3\textwidth]{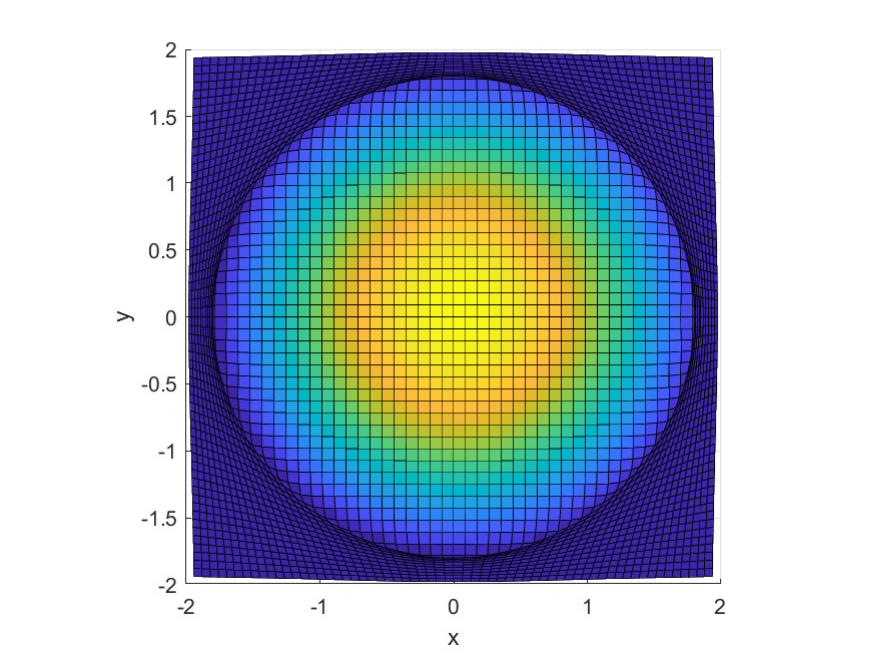}
	}
	\subfigure[$|B_{2,2}({\bm x},t)-\rho_h|$ at $t=4$]{
	\includegraphics[width=0.3\textwidth]{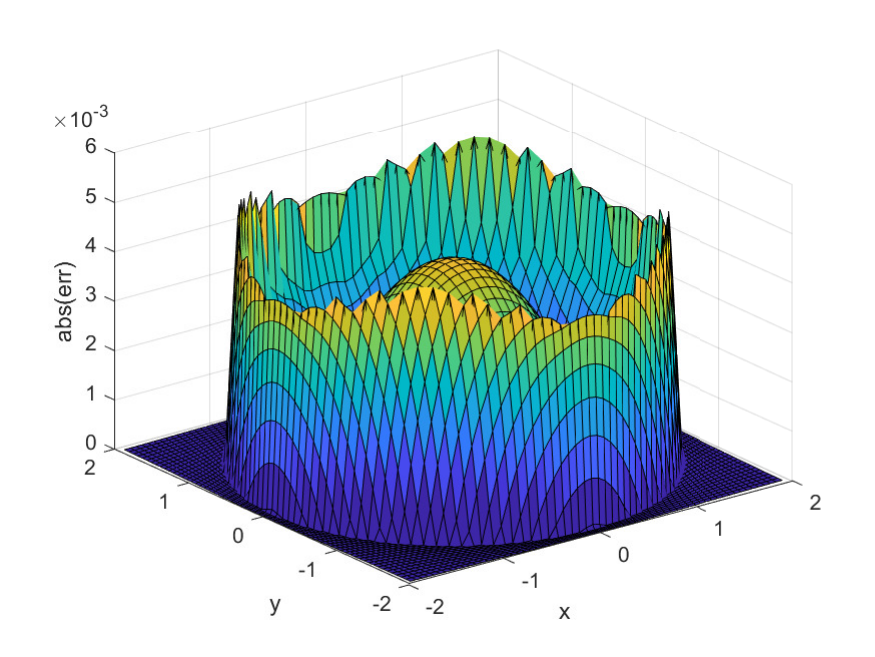}
}
	\caption{Numerical solution solved by \eqref{ex:1}-\eqref{ex:2} with  the regularization term $\epsilon\Delta_{\bm X}{\bm x}^{k+1}$, $\epsilon=10^{-3}\delta t$, the initial value is Barenblatt solution with $C_{B2}=0.1$, $m=2$, $M_x=M_y=64$, $\delta t=0.01$.}\label{fig:pme,2}
\end{figure}
\begin{figure}[!htbp]
	\centering
	\subfigure[Trajectory at $t=1$]{
		\includegraphics[width=0.3\textwidth]{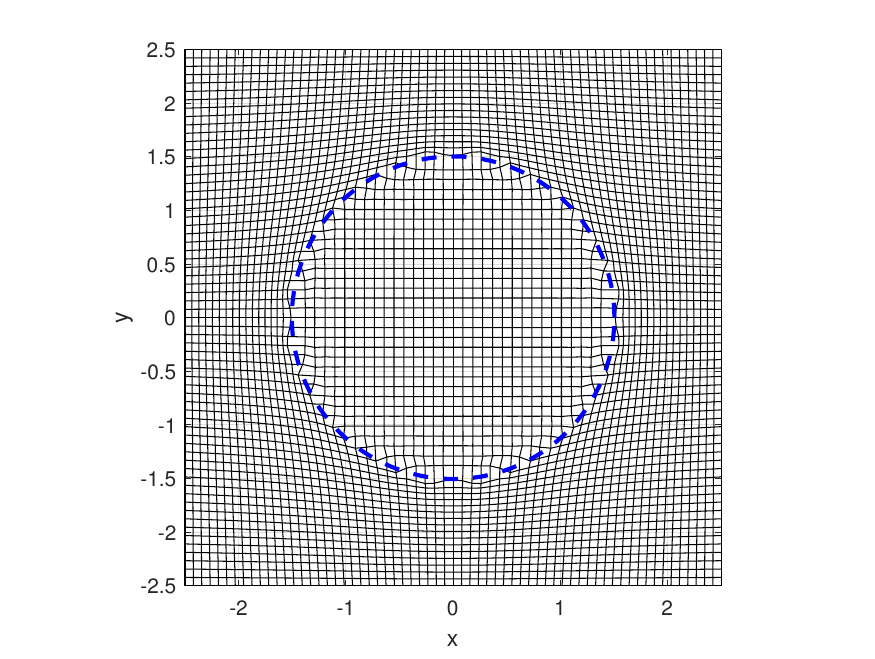}
	}
	\subfigure[Trajectory at $t=2$]{
		\includegraphics[width=0.3\textwidth]{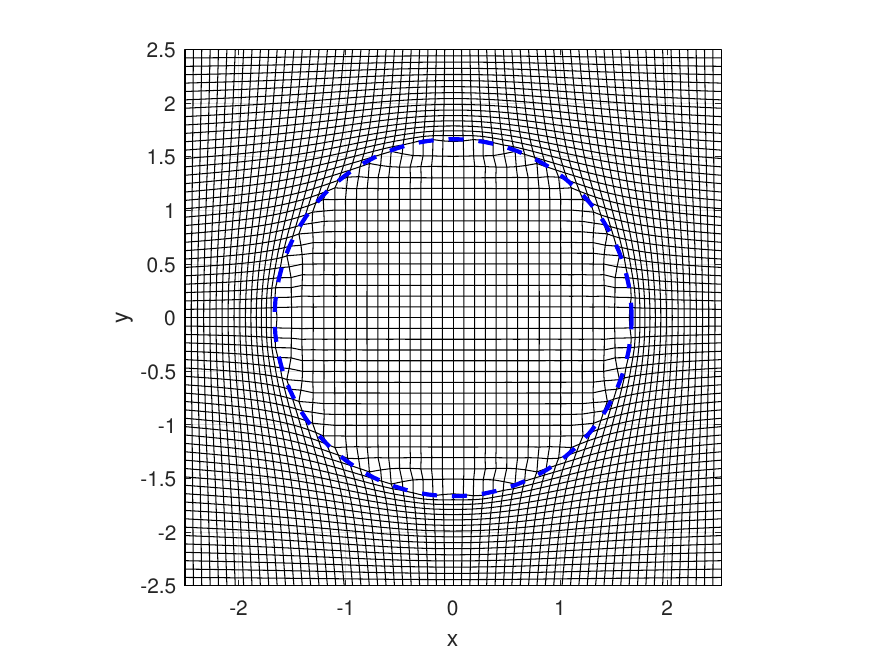}
	}
	\subfigure[Trajectory at $t=4$]{
		\includegraphics[width=0.3\textwidth]{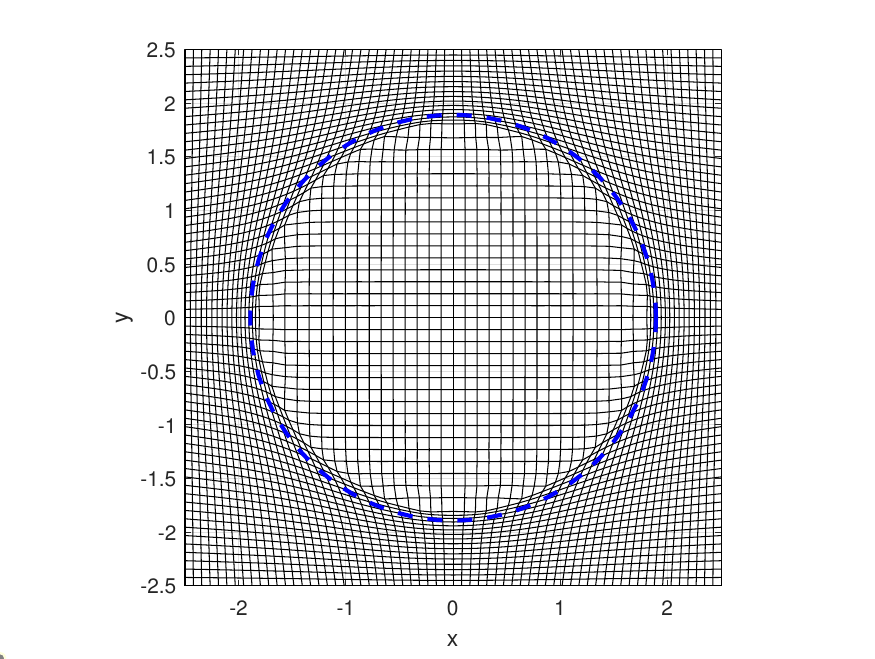}
	}
	\caption{Trajectories solved by \eqref{ex:1}-\eqref{ex:2} with  the regularization term $\epsilon\Delta_{\bm X}{\bm x}^{k+1}$, $\epsilon=10^{-3}\delta t$, the initial value is Barenblatt solution with $C_{B2}=0.1$, $m=2$, $M_x=M_y=64$, $\delta t=0.01$. The blue line represents the exact interface of the support set calculated by \eqref{pme:2d support set}.}\label{fig:pme,22}
\end{figure}

\begin{figure}[!htbp]
	\centering
	\subfigure[Numerical solution at $t=1$ ]{
		\includegraphics[width=0.3\textwidth]{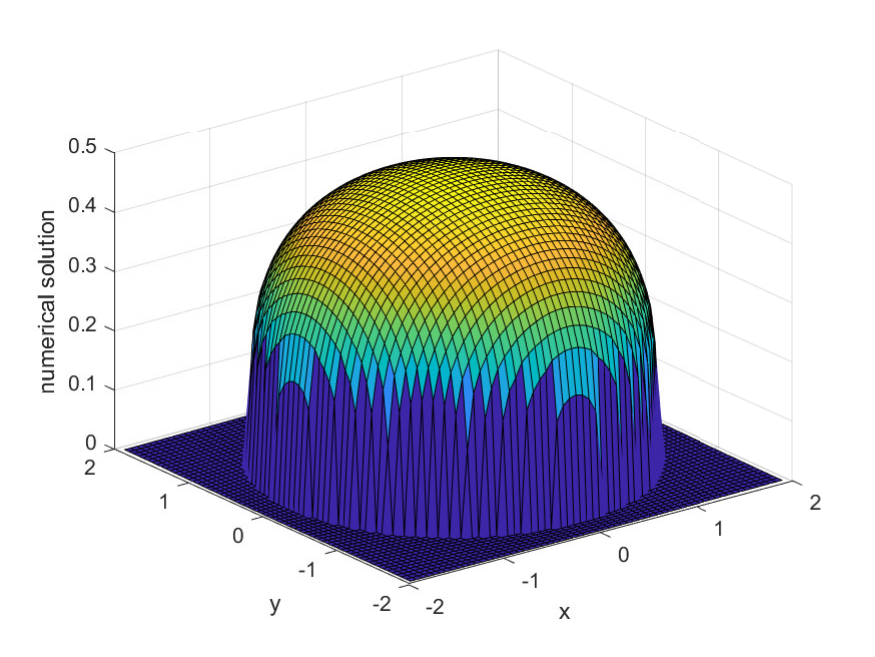}
	}
	\subfigure[Numerical solution at $t=1$]{
		\includegraphics[width=0.3\textwidth]{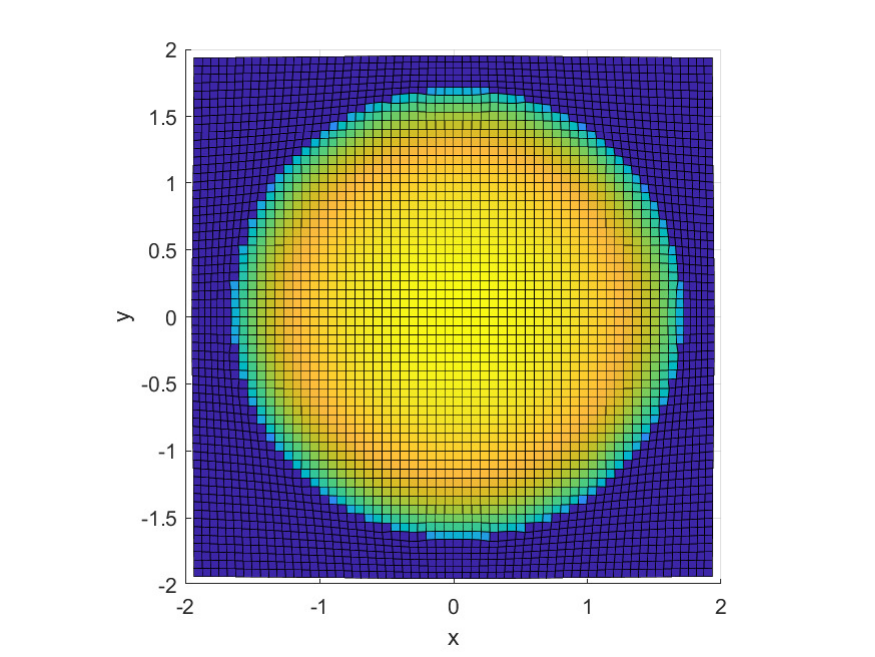}
	}
	\subfigure[$|B_{5,2}({\bm x},t)-\rho_h|$ at $t=1$]{
		\includegraphics[width=0.3\textwidth]{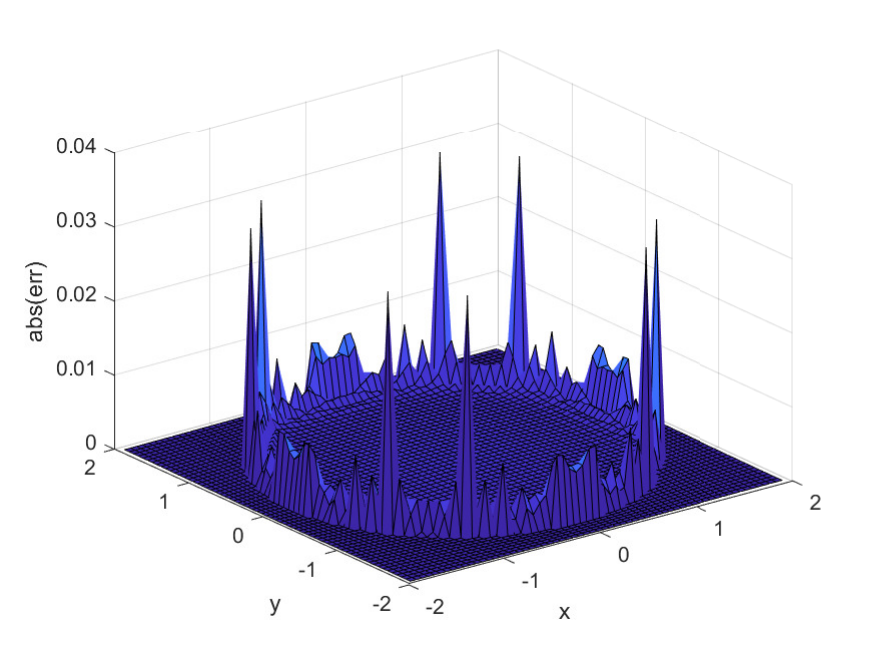}
	}
	\subfigure[Numerical solution at $t=2$ ]{
		\includegraphics[width=0.3\textwidth]{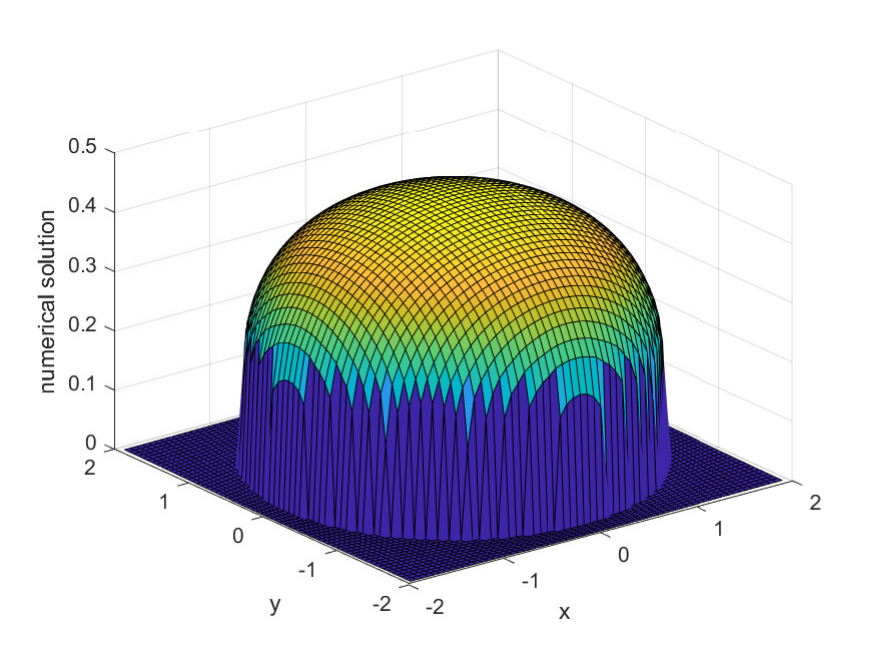}
	}
	\subfigure[Numerical solution at $t=2$]{
		\includegraphics[width=0.3\textwidth]{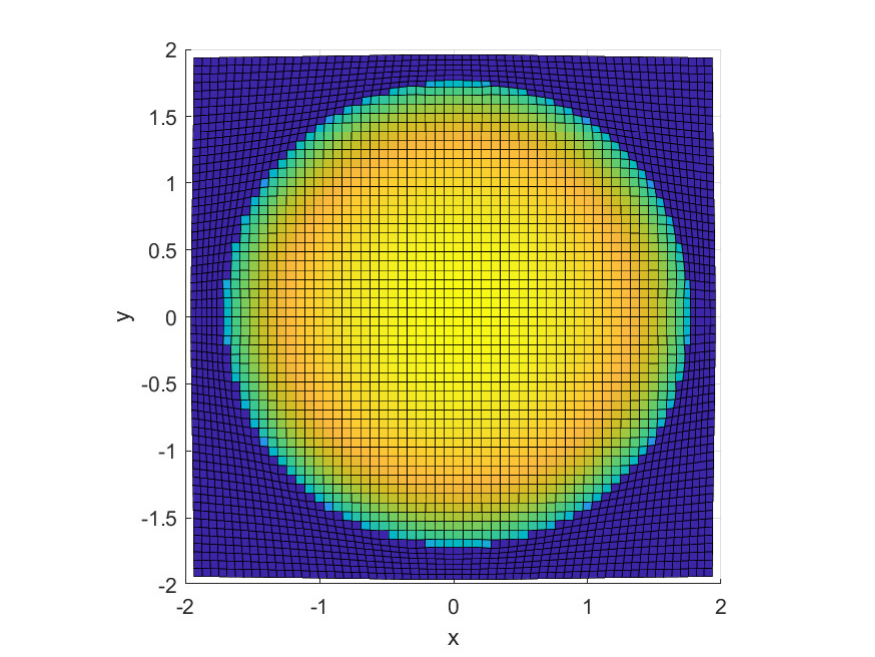}
	}
	\subfigure[$|B_{5,2}({\bm x},t)-\rho_h|$  at $t=2$]{
		\includegraphics[width=0.3\textwidth]{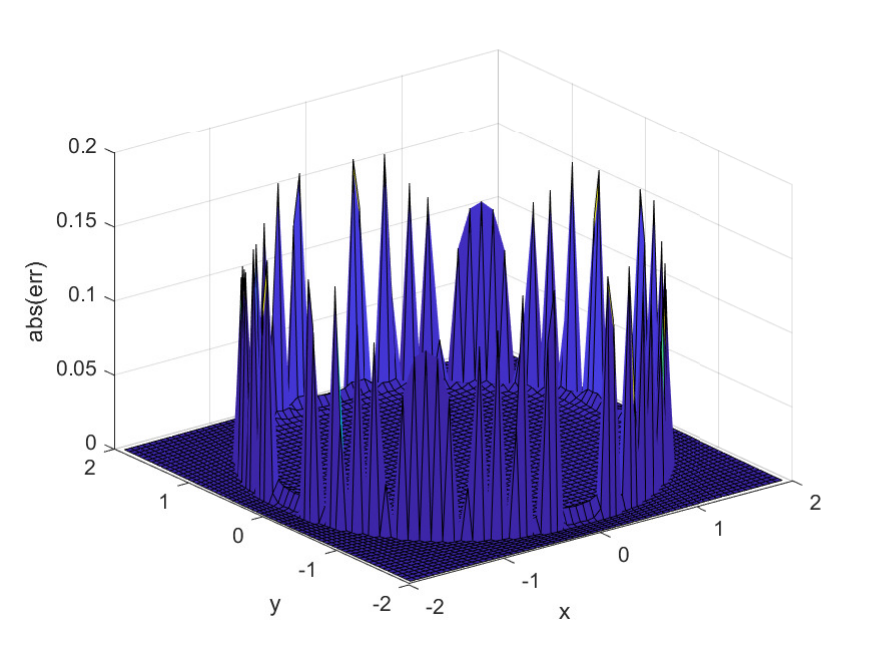}
	}
	\subfigure[Numerical solution at $t=4$]{
		\includegraphics[width=0.3\textwidth]{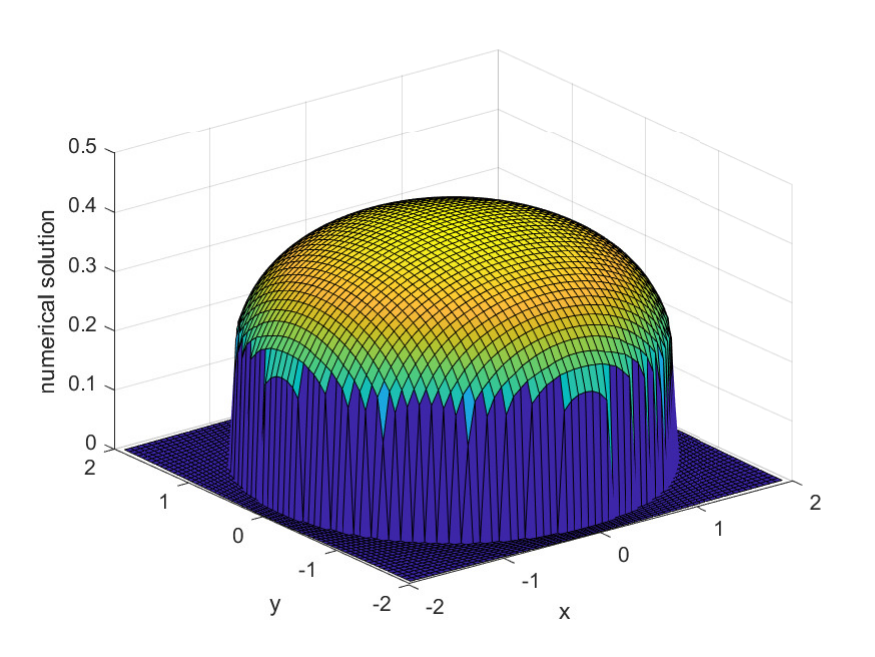}
	}
	\subfigure[Numerical solution at $t=4$]{
		\includegraphics[width=0.3\textwidth]{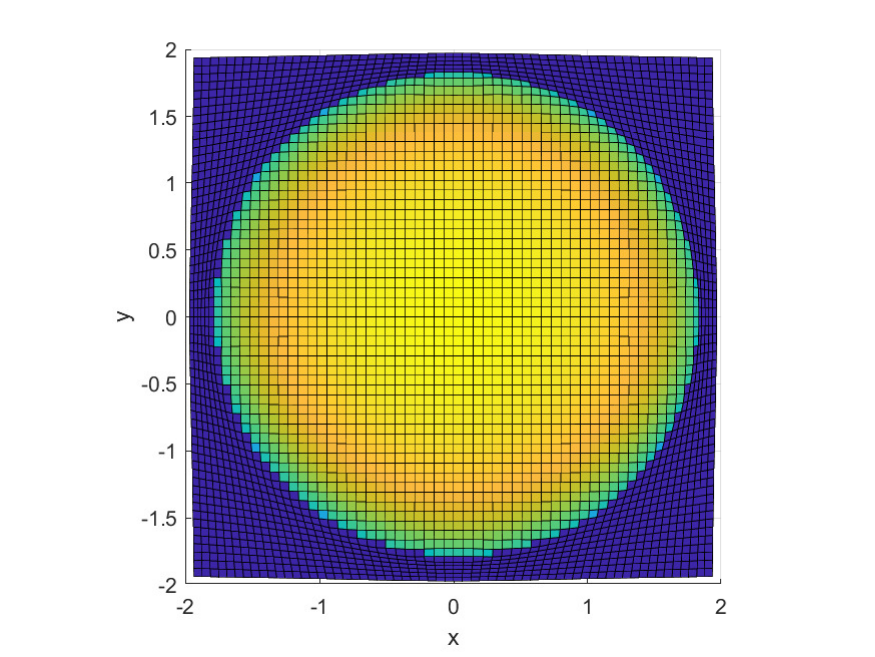}
	}
	\subfigure[$|B_{5,2}({\bm x},t)-\rho_h|$  at $t=4$]{
		\includegraphics[width=0.3\textwidth]{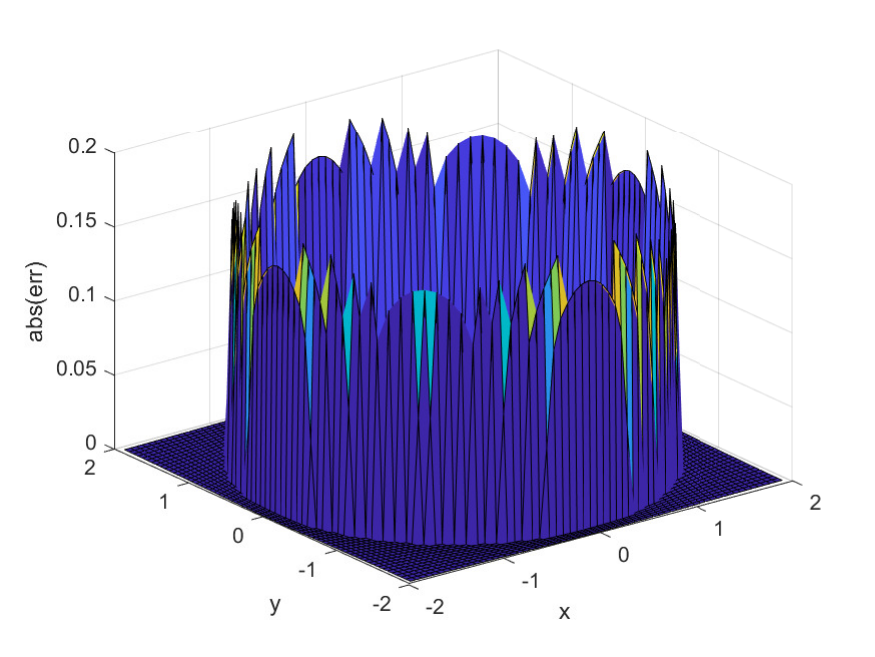}
	}
	\caption{Numerical solution solved by \eqref{ex:1}-\eqref{ex:2} with  the regularization term $\epsilon\Delta_{\bm X}{\bm x}^{k+1}$, $\epsilon=10^{-1}\delta t$, the initial value is Barenblatt solution with $C_{B2}=0.1$, $m=5$, $M_x=M_y=64$, $\delta t=0.01$.}\label{fig:pme,5}
\end{figure}
\begin{figure}[!htbp]
	\centering
	\subfigure[Trajectory at $t=1$]{
		\includegraphics[width=0.3\textwidth]{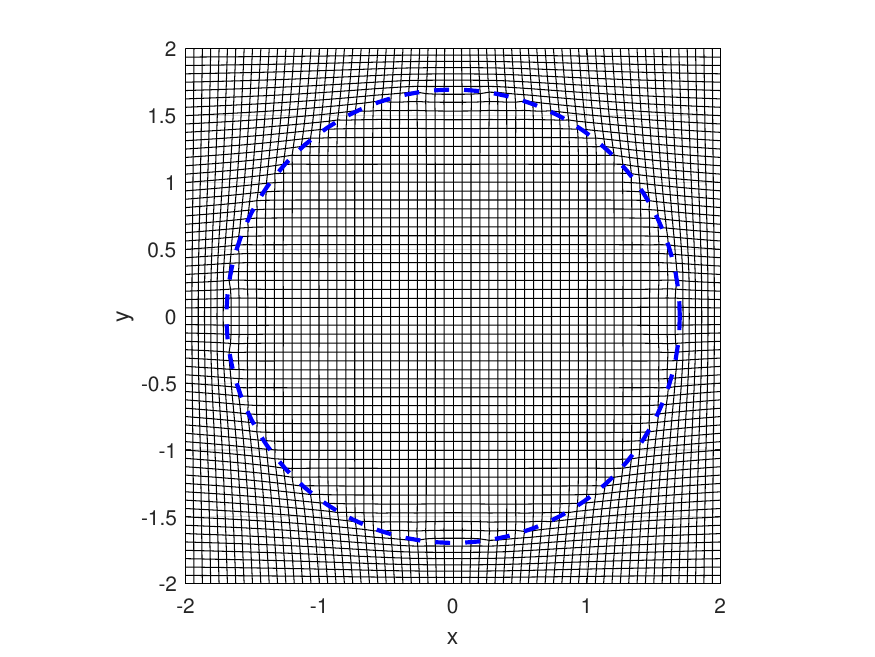}
	}
	\subfigure[Trajectory at $t=2$]{
		\includegraphics[width=0.3\textwidth]{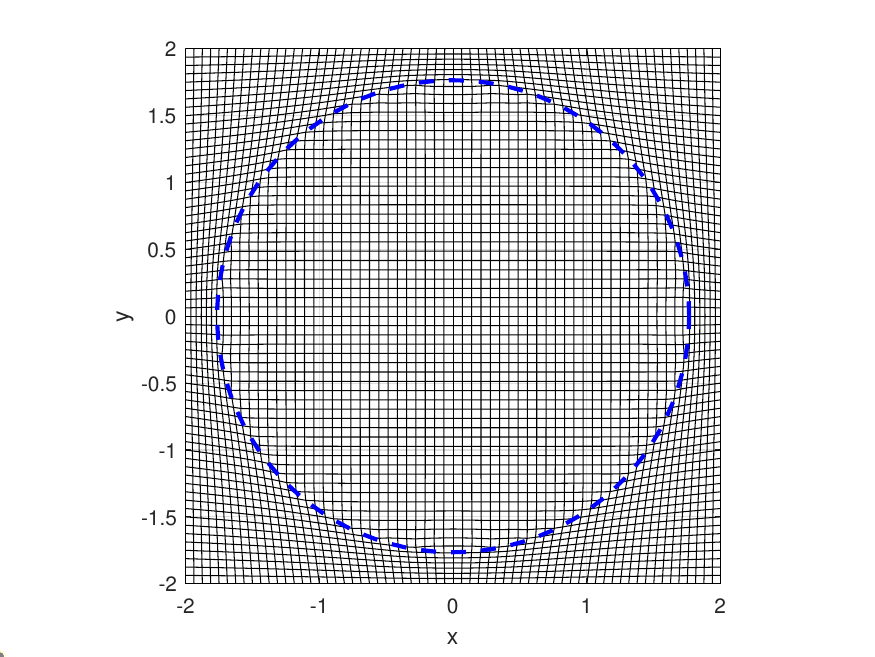}
	}
	\subfigure[Trajectory at $t=4$]{
		\includegraphics[width=0.3\textwidth]{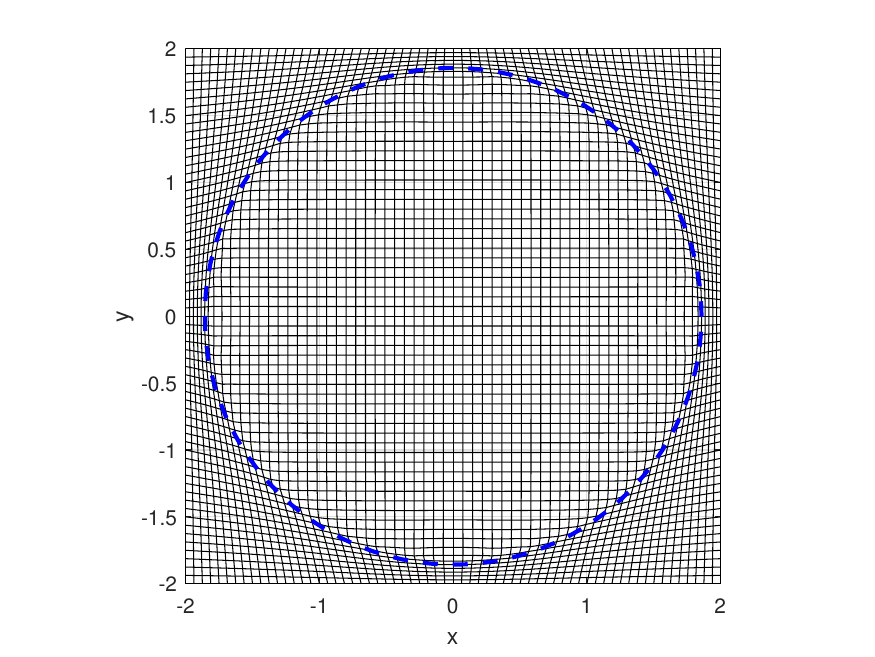}
	}
	\caption{Trajectories solved by \eqref{ex:1}-\eqref{ex:2} with  the regularization term $\epsilon\Delta_{\bm X}{\bm x}^{k+1}$, $\epsilon=10^{-1}\delta t$, the initial value is Barenblatt solution with $C_{B2}=0.1$, $m=5$, $M_x=M_y=64$, $\delta t=0.01$. The blue line represents the exact interface of the support set calculated by \eqref{pme:2d support set}.}\label{fig:pme,52}
\end{figure}


 The energy curves are shown in Figure~\ref{fig:pme,energy} which indicates that the energy dissipation law holds in both cases where $m=2$ and $m=5$. 
	\begin{figure}[!htb]
		\centering
		\subfigure[$m=2$]{
			\includegraphics[width=0.4\textwidth]{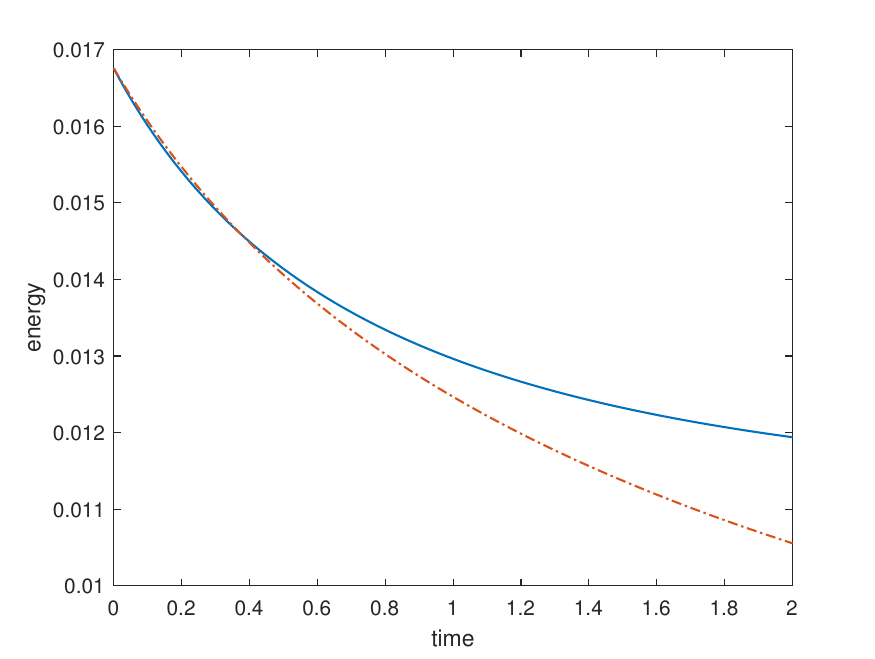}
		}
		\subfigure[$m=5$]{
			\includegraphics[width=0.4\textwidth]{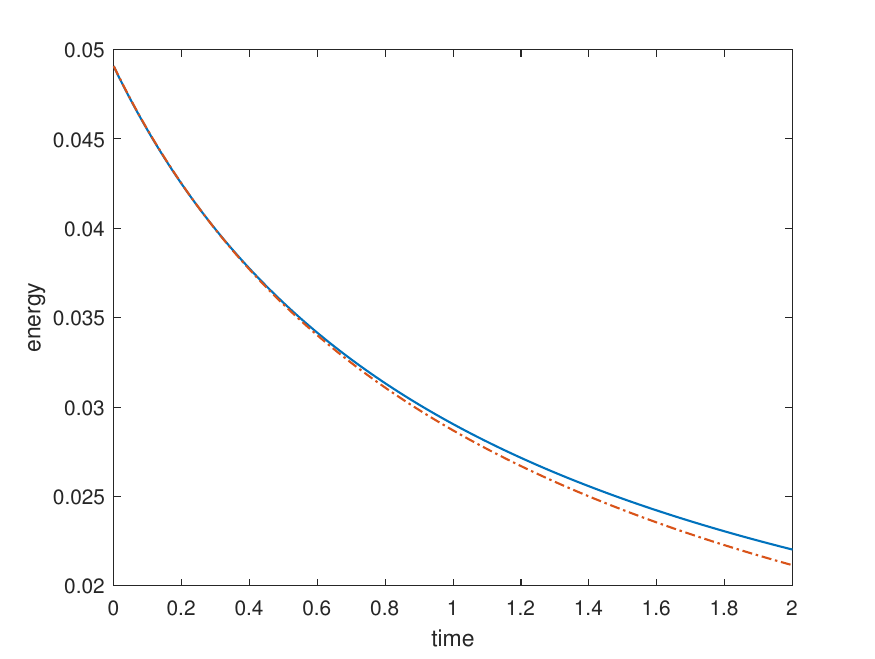}
		}
		\caption{Energy solved by \eqref{ex:1}-\eqref{ex:2} with different regularization terms. Set $M_x=M_y=64$, $\delta t=0.01$, the initial value is Barenblatt solution with $C_{B2}=0.1$, $m=2$, $m=5$. The blue solid lines:  $\epsilon\Delta_{\bm X}{\bm x}^{k+1}$, $\epsilon=h_x^2$. The red dashed lines:  $\epsilon_k\Delta_{\bm X}({\bm x}^{k+1}-{\bm x}^{k})$, $\epsilon_k=0.1$. }\label{fig:pme,energy}
	\end{figure}
	Figure~\ref{fig:pme,det} illustrates the minimum and maximum determinant values for both $m=2$ and $m=5$ with various regularization terms, respectively. It is evident from the plots that the determinant value maintains its positivity. Further, a positive distance can be observed between the minimum value and zero.
	\begin{figure}[!htb]
		\centering
		\subfigure[$m=2$]{
			\includegraphics[width=0.4\textwidth]{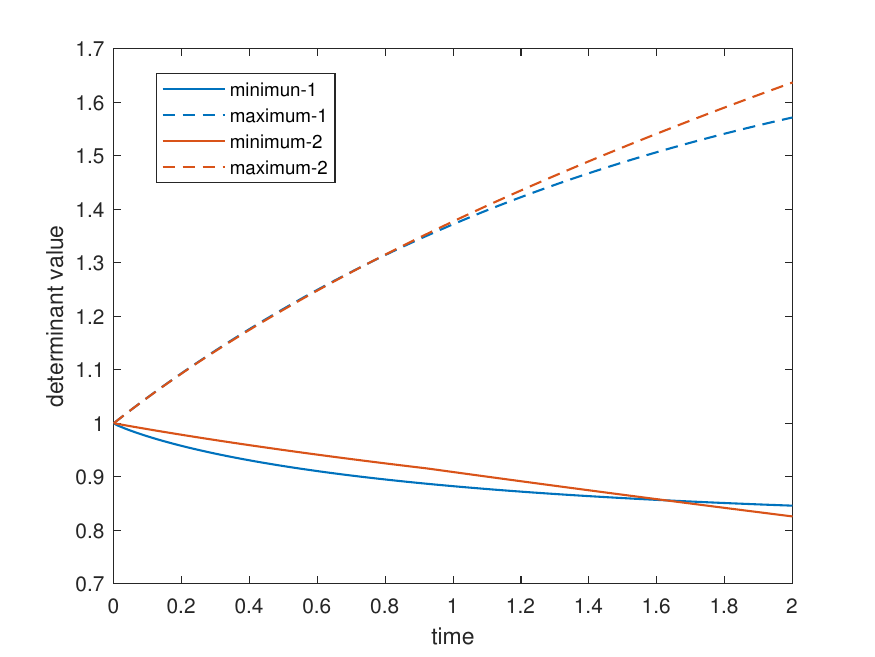}
		}
		\subfigure[$m=5$]{
			\includegraphics[width=0.4\textwidth]{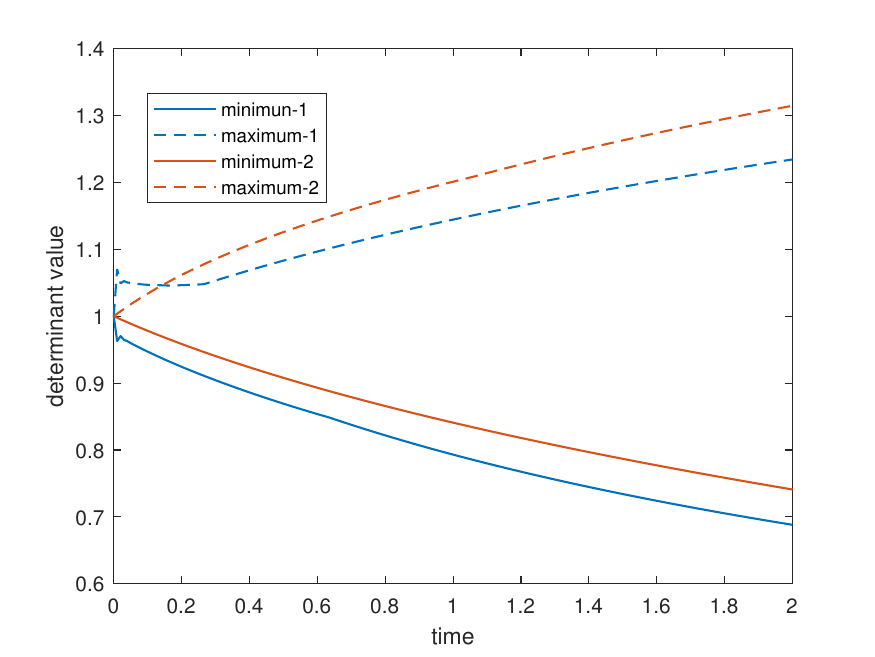}
		}
		\caption{Determinant values solved by \eqref{ex:1}-\eqref{ex:2} with different regularization terms. Set $M_x=M_y=64$, $\delta t=0.01$, the initial value is Barenblatt solution with $C_{B2}=0.1$, $m=2$, $m=5$. The blue lines:  $\epsilon\Delta_{\bm X}{\bm x}^{k+1}$, $\epsilon=h_x^2$. The red lines:  $\epsilon_k\Delta_{\bm X}({\bm x}^{k+1}-{\bm x}^{k})$, $\epsilon_k=0.1$.}\label{fig:pme,det}
\end{figure}
The convergence order of the numerical solution with the exact solution $B_{m,2}({\bm x},0.1)$, $m=2$ and $m=2.5$ are shown in Table~\ref{convergence pme_2d}.
\begin{table}[!htb]
	\centering
	\caption{Convergence order with $B_{m,2}({\bm x},0.1)$, $C_{B2}=0.1$, $m=2$ and $m=2.5$ in $[-2,2]\times[-2,2]$. Numerical solutions are solved by \eqref{ex:1}-\eqref{ex:2} with regularization term $\epsilon\Delta_{\bm X}{\bm x}^{k+1}$, $\epsilon=h_x^2$.}
	\begin{tabular}{cc||cc||cc}
		\hline
		&	&	$m=2$& & $m=2.5$ & \\
		\hline
		$M_x\times M_y$ &$N_t$&$L_h^{2}$ error ($\rho$) & order & $L_h^{2}$ error ($\rho$) & order \\ \hline
		16x16 &16  & 0.0029&      &0.0054&      \\
		32x32 &32 & 0.0014&   1.0506   &0.0026&      1.0544  \\
		64x64 &64&7.5117e-04& 0.8982   & 0.0014&  	0.8931   \\
		128x128 &128& 3.6211e-04&1.0527&  6.5371e-04&1.0987\\
		256x256 &256& 1.7947e-04&1.0127&  3.0605e-04&1.0949\\
		\hline
	\end{tabular}\label{convergence pme_2d}
\end{table}

  {\bf Non-radial problem.} Consider the following initial value:
	\begin{equation}\label{ini:nonradial}
	\rho_0(x,y)=\begin{cases}
	25(0.25^2-(\sqrt{x^2+y^2}-0.75)^2)^{\frac{3}{2}},\ &\sqrt{x^2+y^2}\in[0.5,1]\ \text{and}\ (x<0\ \text{or}\ y<0),\\
	25(0.25^2-x^2-(y-0.75)^2)^{\frac{3}{2}},\ &x^2+(y-0.75)^2\le 0.25^2\ \text{and}\ x\ge 0,\\
	25(0.25^2-(x-0.75)^2-y^2)^{\frac{3}{2}},\ &(x-0.75)^2+y^2\le 0.25^2\ \text{and}\ y\ge 0,\\
	0,\ &\text{otherwise},
	\end{cases}
	\end{equation}
	which has a partial donut-shaped support \cite{liu2020lagrangian}. 
	Let's set $m=3$, $M_x=M_y=64$, $\delta t=0.001$, and  use scheme \eqref{ex:1}-\eqref{ex:2} with the regularization term $\epsilon\Delta_{\bm X}{\bm x}^{k+1}$, $\epsilon=0.1\delta t$  to implement the numerical experiments, numerical results are shown in Figure~\ref{fig:nonradial,0}, where the evolution of the trajectories can be found.  We can observe that the scheme \eqref{ex:1}-\eqref{ex:2} handles this situation well. However, it should be noted that this approach cannot handle topological changes automatically, which serves as a limitation of our method.

\begin{figure}[!htb]
	\centering
	\subfigure[Numerical solution at $t=0.1$]{
		\includegraphics[width=0.3\textwidth]{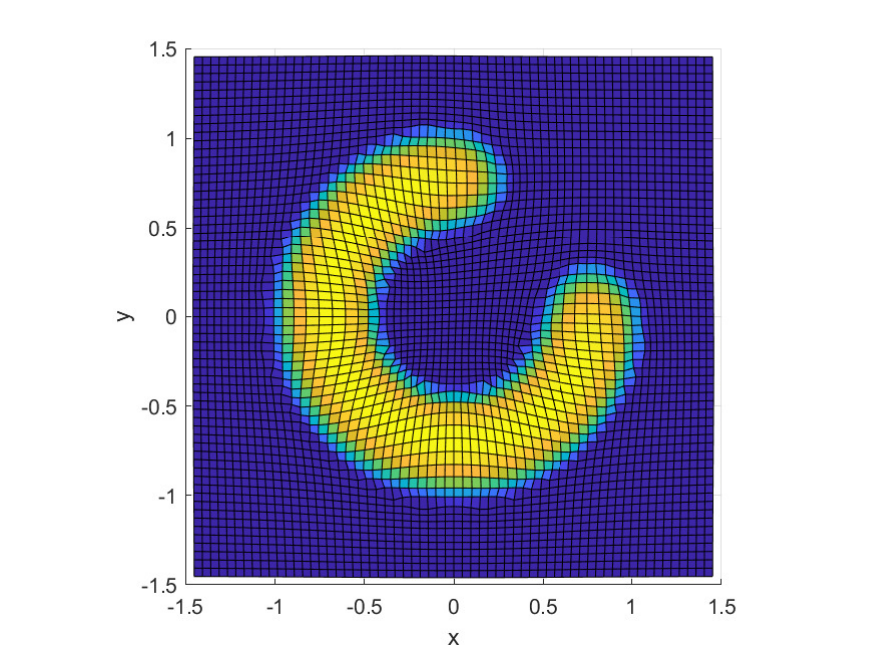}
	}
	\subfigure[Numerical solution at $t=0.2$]{
		\includegraphics[width=0.3\textwidth]{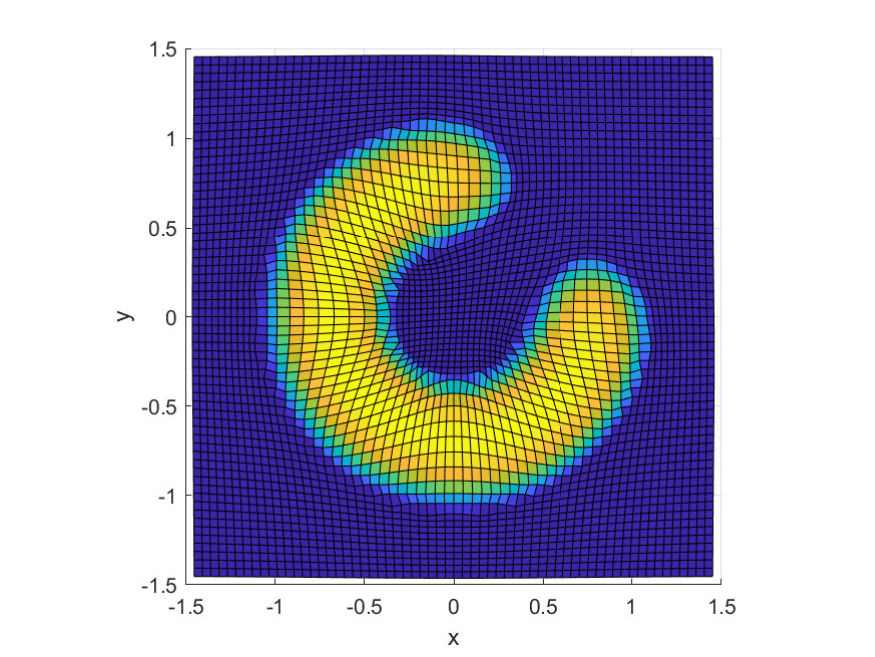}
	}
	\subfigure[Numerical solution at $t=0.5$]{
		\includegraphics[width=0.3\textwidth]{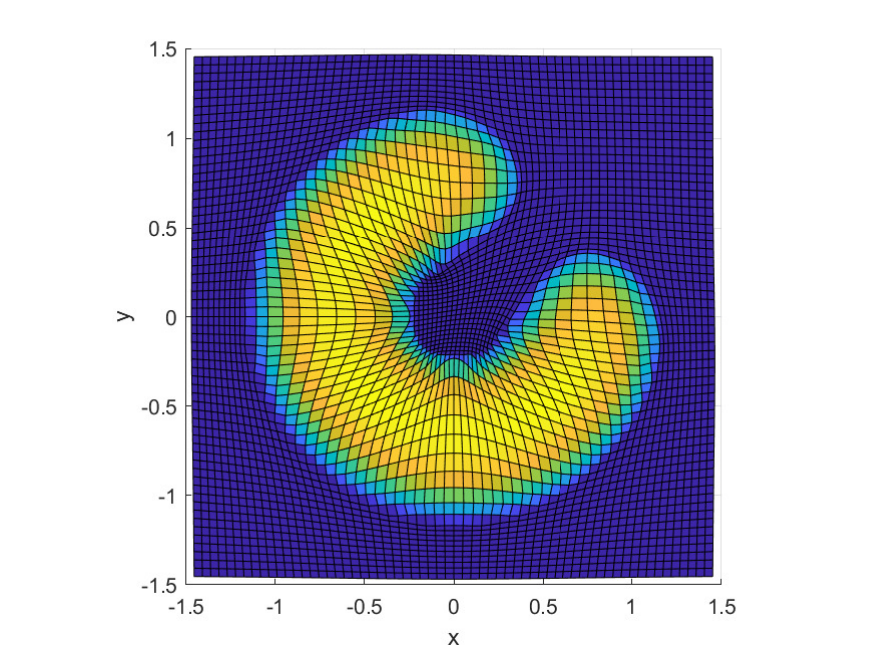}
	}
	\subfigure[Trajectory at $t=0.1$]{
		\includegraphics[width=0.3\textwidth]{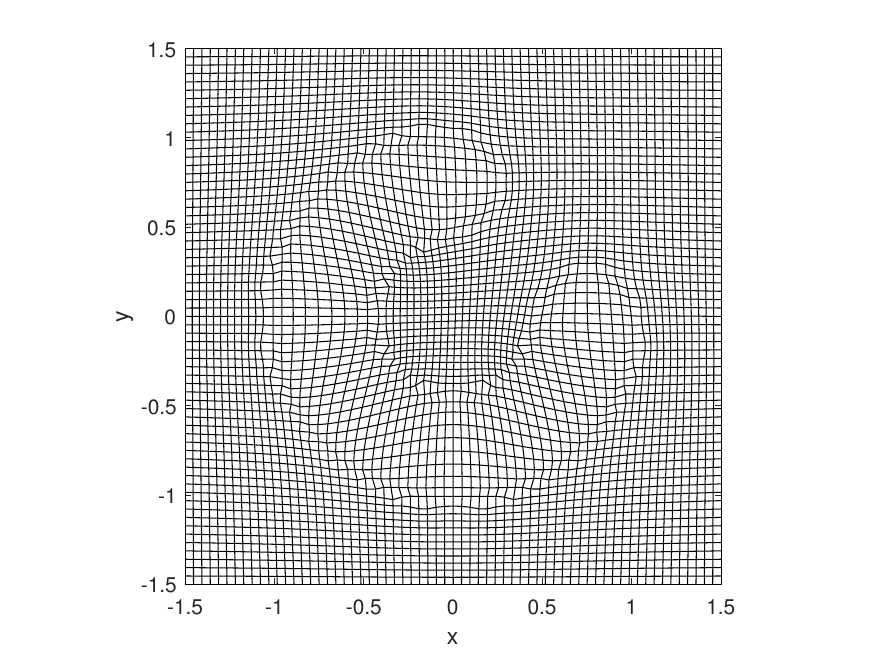}
	}
	\subfigure[Trajectory at $t=0.2$]{
		\includegraphics[width=0.3\textwidth]{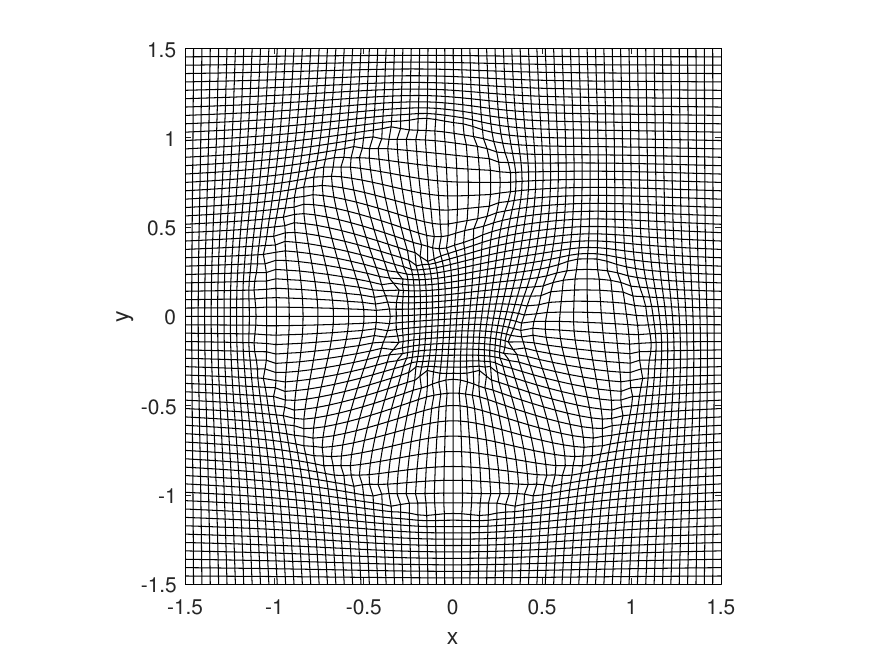}
	}
	\subfigure[Trajectory at $t=0.5$]{
		\includegraphics[width=0.3\textwidth]{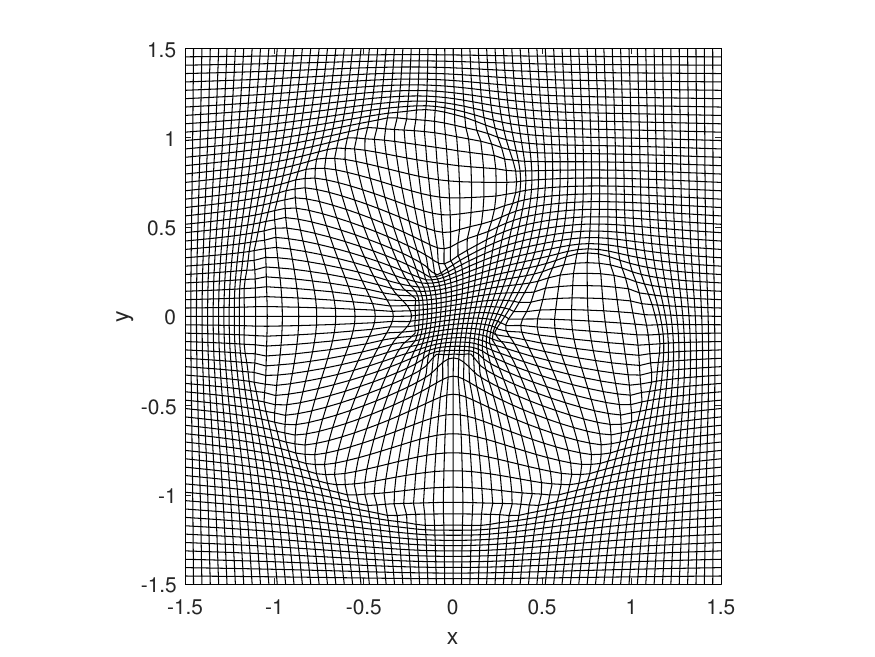}
	}
	\caption{Numerical solution solved by \eqref{ex:1}-\eqref{ex:2} with the regularization term $\epsilon\Delta_{\bm X}{\bm x}^{k+1}$, $\epsilon=0.1\delta t$, $m=3$, the initial value \eqref{ini:nonradial} in $[-1.5,1.5]\times[-1.5,1.5]$, $M_x=M_y=64$, $\delta t=0.001$.}\label{fig:nonradial,0}
\end{figure}

Now we take the initial value as
	\begin{equation}\label{ini:nonradial1}
	\rho_0(x,y)=e^{-20((x-0.5)^2+(y-0.5)^2)},\ x\in[-2,2]\times[-2,2].
	\end{equation}
	 Let's set $m=2$, $M_x=M_y=64$, $\delta t=0.01$, and use scheme \eqref{ex:1}-\eqref{ex:2} with the regularization term $\epsilon\Delta_{\bm X}{\bm x}^{k+1}$, $\epsilon=0.1\delta t$ to simulate the numerical experiments, results are shown in Figure~\ref{fig:nonradial,1}.
	 It can be found that the proposed scheme \eqref{ex:1}-\eqref{ex:2} can handle the nonradial case well.

\begin{figure}[!htb]
	\centering
	\subfigure[Numerical solution at $t=0.5$]{
		\includegraphics[width=0.3\textwidth]{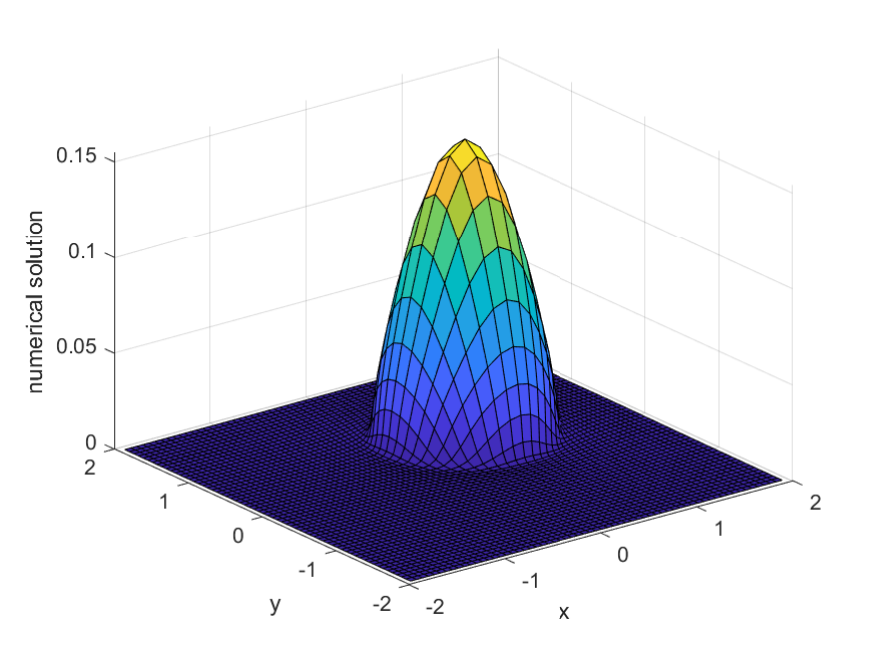}
	}
	\subfigure[Numerical solution at $t=1$]{
		\includegraphics[width=0.3\textwidth]{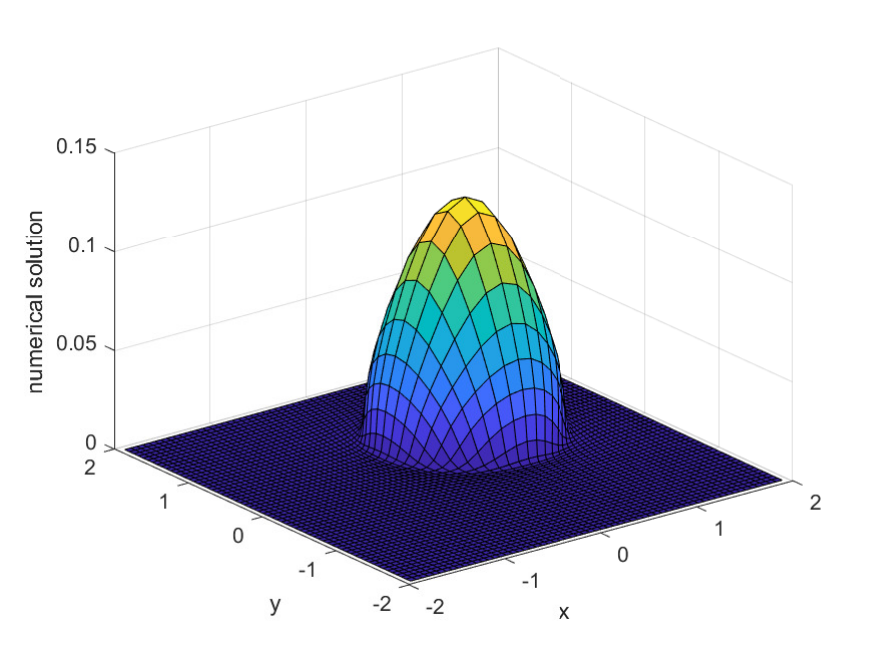}
	}
	\subfigure[Numerical solution at $t=5$]{
		\includegraphics[width=0.3\textwidth]{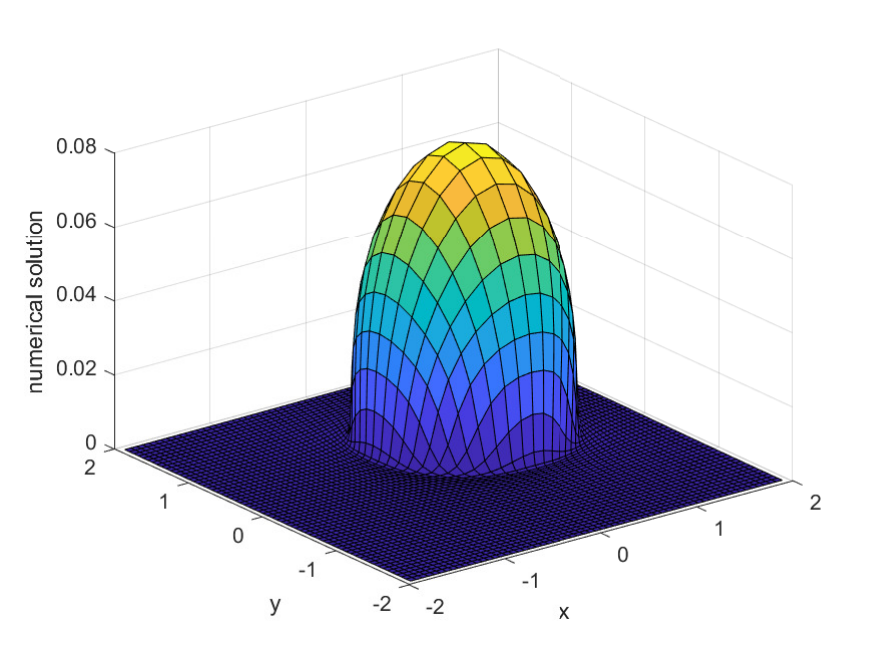}
	}
	\subfigure[Trajectory at $t=0.5$]{
		\includegraphics[width=0.3\textwidth]{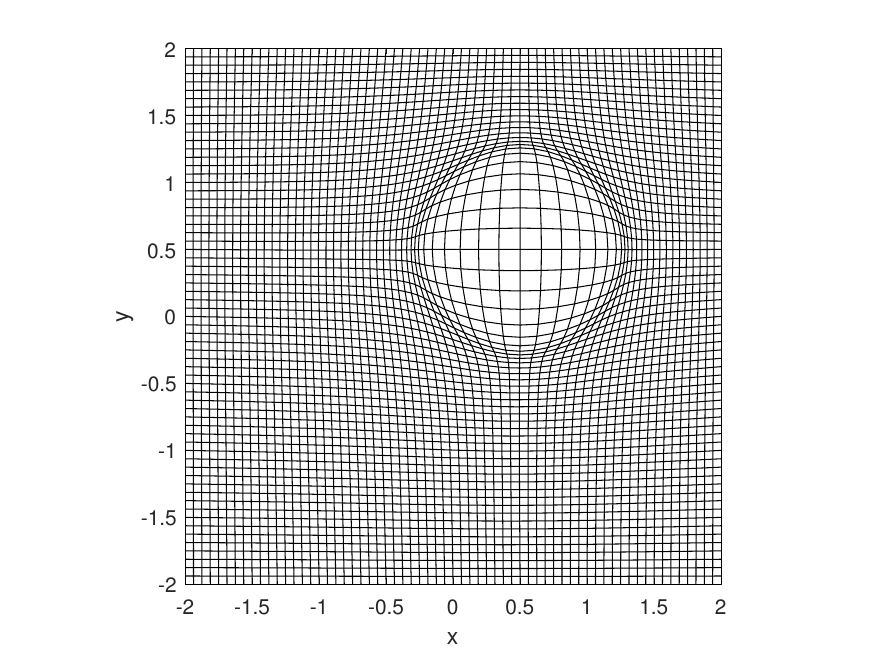}
	}
	\subfigure[Trajectory at $t=1$]{
		\includegraphics[width=0.3\textwidth]{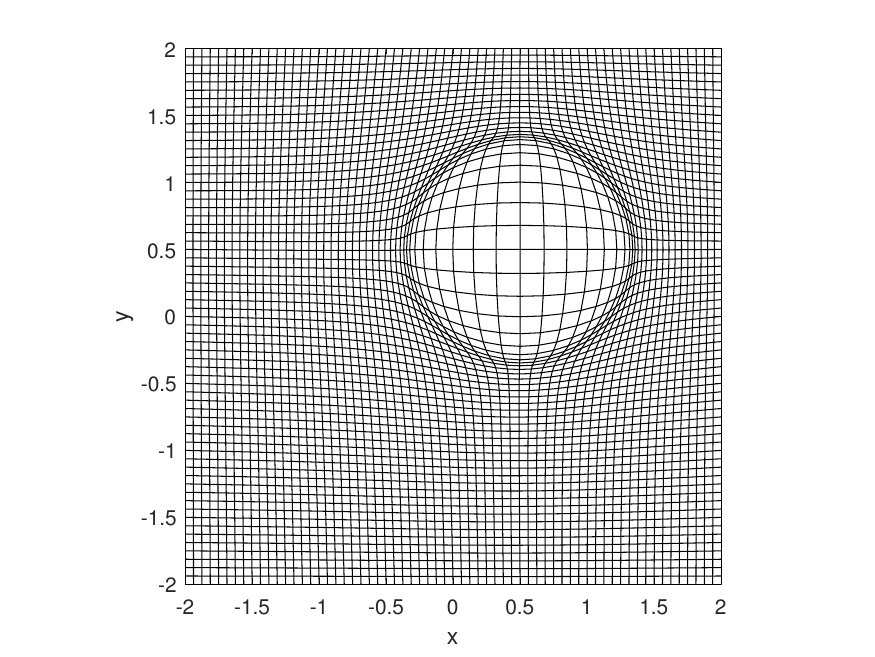}
	}
	\subfigure[Trajectory at $t=5$]{
		\includegraphics[width=0.3\textwidth]{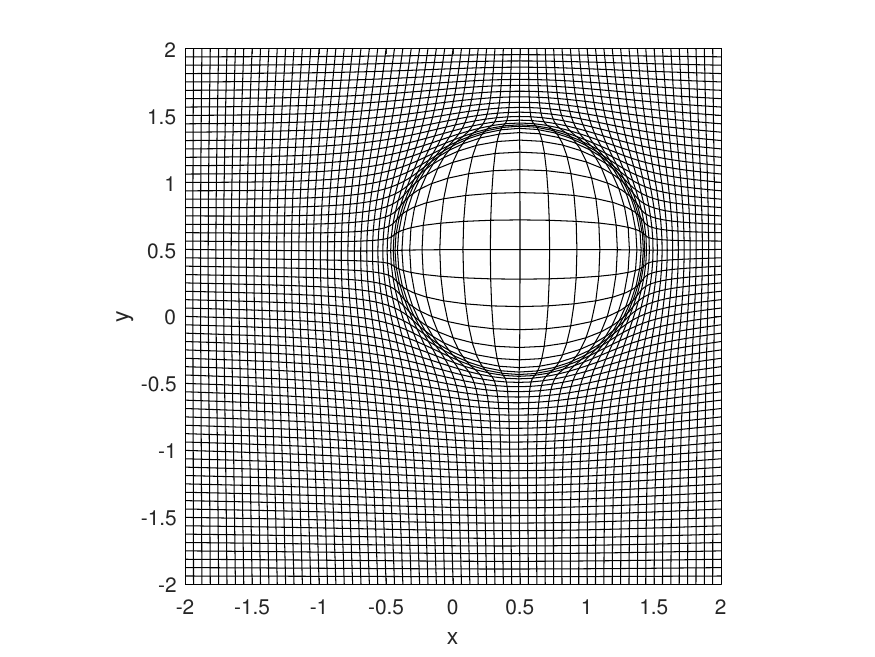}
	}
	\caption{Numerical solution  solved by \eqref{ex:1}-\eqref{ex:2} with the regularization term $\epsilon\Delta_{\bm X}{\bm x}^{k+1}$, $\epsilon=0.1\delta t$, $m=2$, the initial value \eqref{ini:nonradial1} in $[-2,2]\times[-2,2]$, $M_x=M_y=64$, $\delta t=0.01$.		
	}\label{fig:nonradial,1}
\end{figure}

\subsubsection{Aggregation equation}
Consider the following aggregation equation:
\begin{align}\label{aggre}
	\partial_t\rho=\nabla\cdot(\rho\nabla W*\rho),\qquad W:\mathbb{R}^2\rightarrow \mathbb{R}.
\end{align}
We simulate the evolution of solutions to \eqref{aggre} with
\begin{align*}
	W({\bm x})=\frac{|{\bm x}|^2}{2}-\ln |{\bm x}|,
\end{align*}
and the initial value 
\begin{align}\label{initial:ks2d}
\rho_0(x,y)=C_{2d}e^{-x^2-y^2},
\end{align}
 in $[-2,2]\times[-2,2]$, $C_{2d}=1$, $M_x=M_y=64$, $\delta t=0.01$.
  Using scheme \eqref{ex:1}-\eqref{ex:2} with the regularization term $\epsilon\Delta_{\bm X}{\bm x}^{k+1}$, $\epsilon=0.1\delta t$ to simulate the numerical experiments, the numerical solution and trajectory plots at $t=1$ are shown in Figure~\ref{fig:aggre,0}. The blue line in the trajectory plot represents the unit circle. As can be observed the solution converges to a characteristic function on the disk of radius $1$, centered at $(0,0)$, recovering analytic results on solutions of the aggregation equation with Newtonian repulsion \cite{carrillo2022primal,fetecau2011swarm,li2020fisher}. 
 Moreover, the determinant value plot is also presented in Figure~\ref{fig:aggre,0}, we find that despite the minimum value diminishing over time, the distance between the minimum and zero remains positive as the density tends to a steady state.

\begin{figure}[!htb]
	\centering
	\subfigure[Numerical solution at $t=1$]{
		\includegraphics[width=0.3\textwidth]{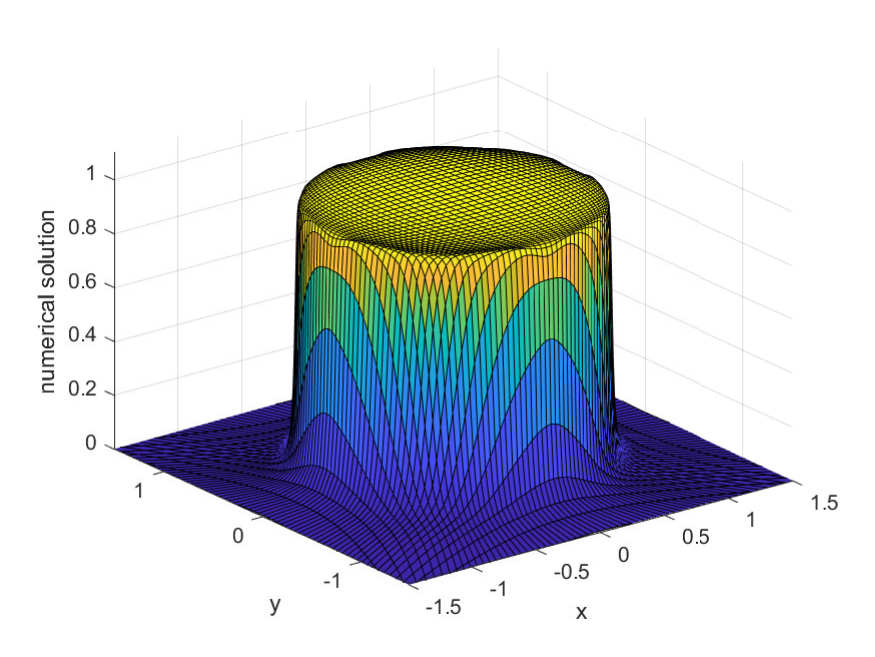}
	}
	\subfigure[Trajectory at $t=1$]{
		\includegraphics[width=0.3\textwidth]{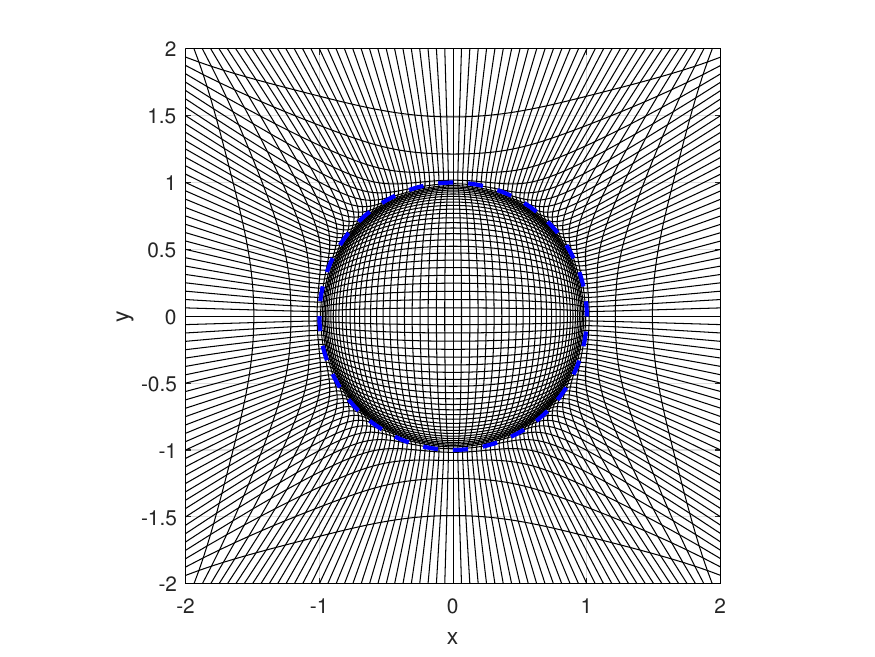}
	}
	\subfigure[Determinant value]{
	\includegraphics[width=0.3\textwidth]{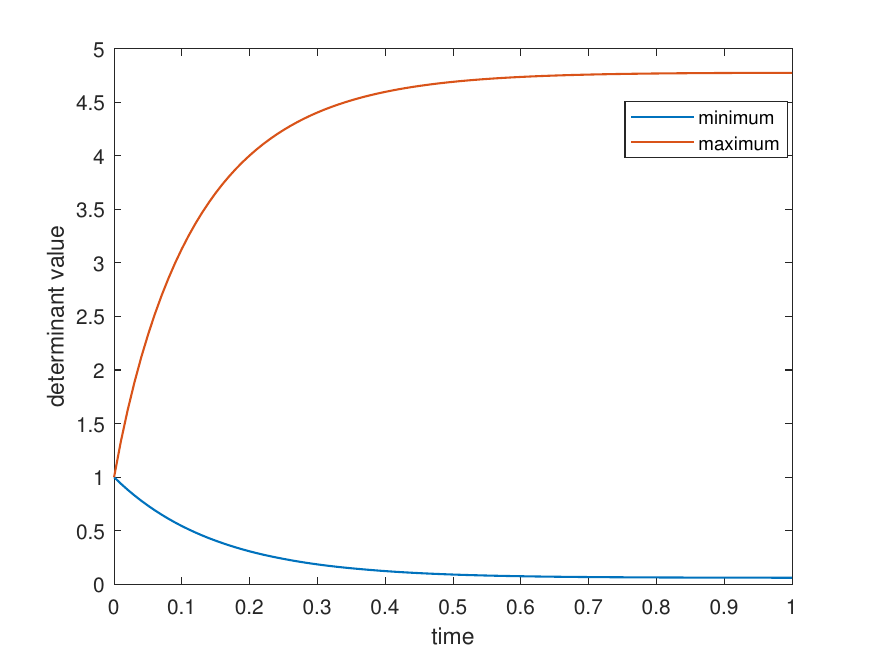}
}
	\caption{Aggregation equation solved by \eqref{ex:1}-\eqref{ex:2} with the regularization term $\epsilon\Delta_{\bm X}{\bm x}^{k+1}$, $\epsilon=0.1\delta t$, the initial value \eqref{initial:ks2d} in $[-2,2]\times[-2,2]$, $M_x=M_y=64$, $\delta t=0.01$.}\label{fig:aggre,0}
\end{figure}

\subsubsection{Aggregation diffusion equation}
Now we simulate several examples of aggregation-diffusion equation:
\begin{align}\label{eq:agg}
\partial_t\rho=\nabla\cdot(\rho\nabla W*\rho)+\nu\Delta\rho^m,\qquad W:\mathbb{R}^2\rightarrow \mathbb{R},\quad m\ge1.
\end{align}

For the aggregation diffusion equation \eqref{eq:agg}, we take $W({\bm x})=-\frac{1}{\pi}e^{-|{\bm x}|^2}$, $m=3$ and $\nu=0.1$. The initial value is taken as 
\begin{align}\label{initial:aggre}
	\rho_0(x,y)=\frac{1}{2}\chi_{|x|\le 2.5,|y|\le 2.5}(x,y).
\end{align} 
Using scheme \eqref{ex:1}-\eqref{ex:2} with the regularization term $\epsilon\Delta_{\bm X}{\bm x}^{k+1}$, $\epsilon=0.1\delta t$  to carry out numerical experiments, 
the evolution of density can be found in Figure~\ref{fig:aggre,2}, the solution tends to form four bumps  at the four angles at the beginning, and finally approaches a single bump equilibrium \cite{carrillo2022primal,carrillo2015finite}. The trajectory and determinant value plots are depicted in Figure~\ref{fig:aggre_diffu,det}, which implies that there is no distortion or swap during the evolution of the solution, and that the minimum value of the determinant is lower bounded away from zero.

\begin{figure}[!htb]
	\centering
	\subfigure[Initial value $\rho_0(x,y)$ ]{
		\includegraphics[width=0.3\textwidth]{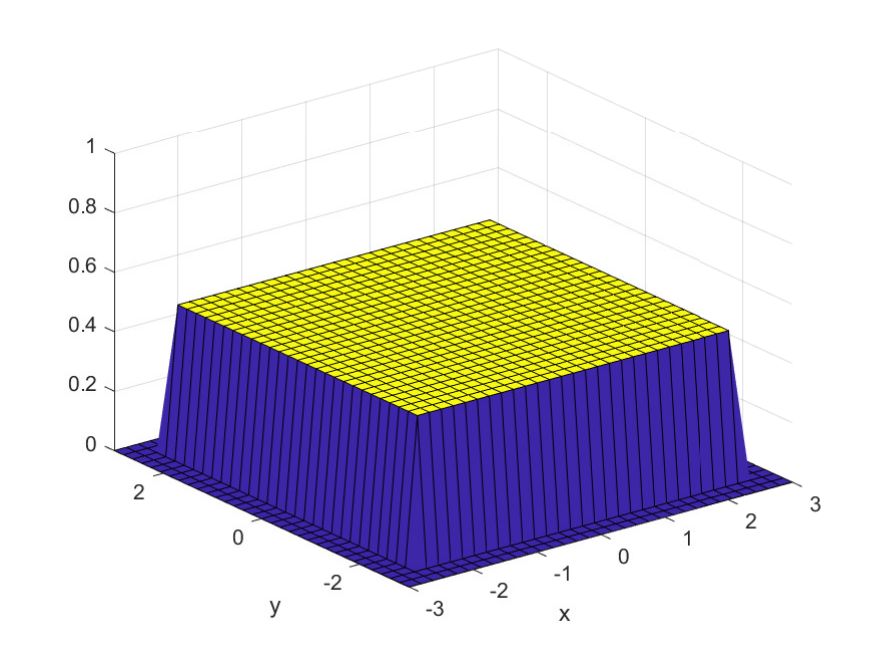}
	}
	\subfigure[Numerical solution at $t=4$]{
		\includegraphics[width=0.3\textwidth]{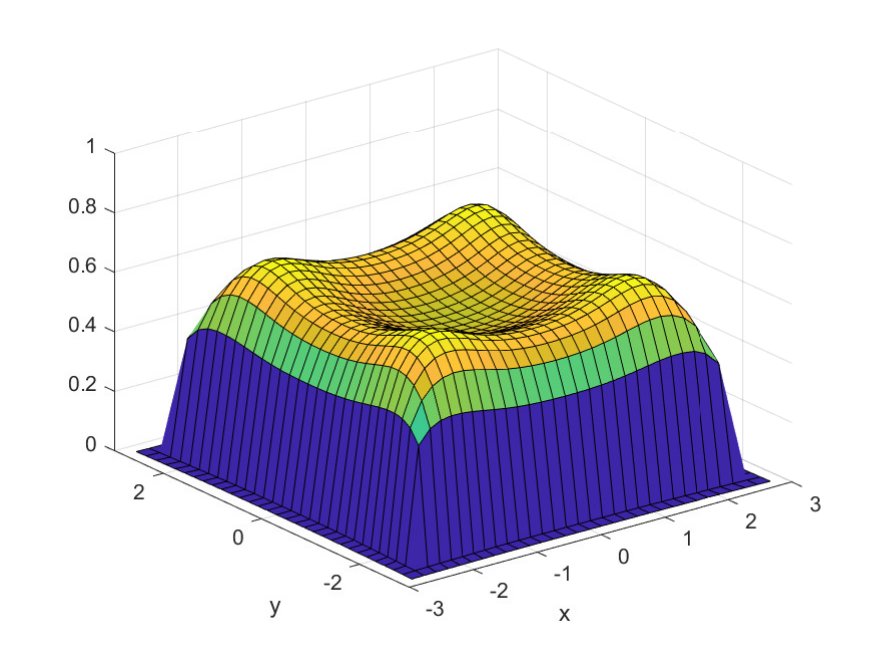}
	}
	\subfigure[Numerical solution at $t=10$]{
		\includegraphics[width=0.3\textwidth]{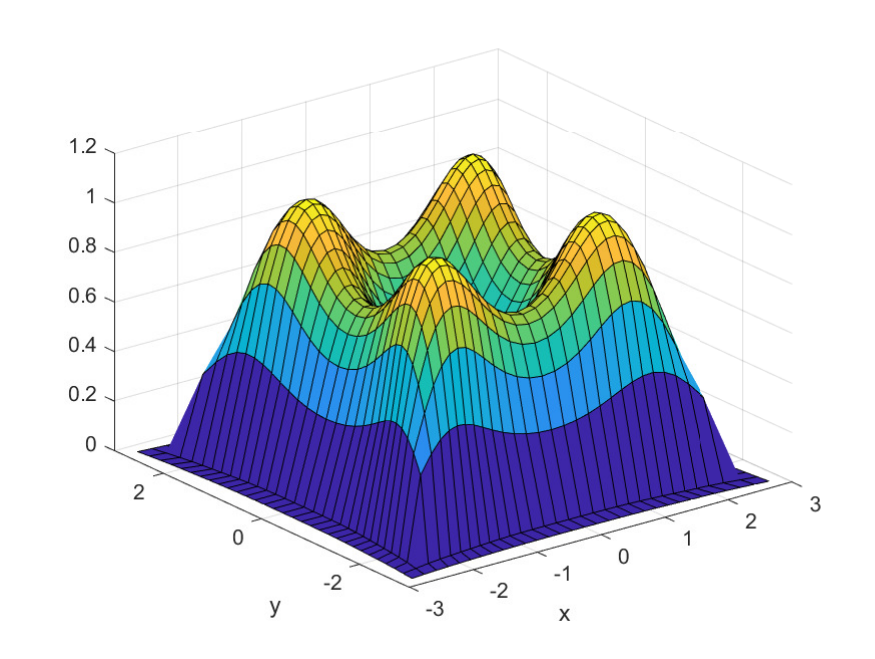}
	}
	\subfigure[Numerical solution at $t=15$]{
		\includegraphics[width=0.3\textwidth]{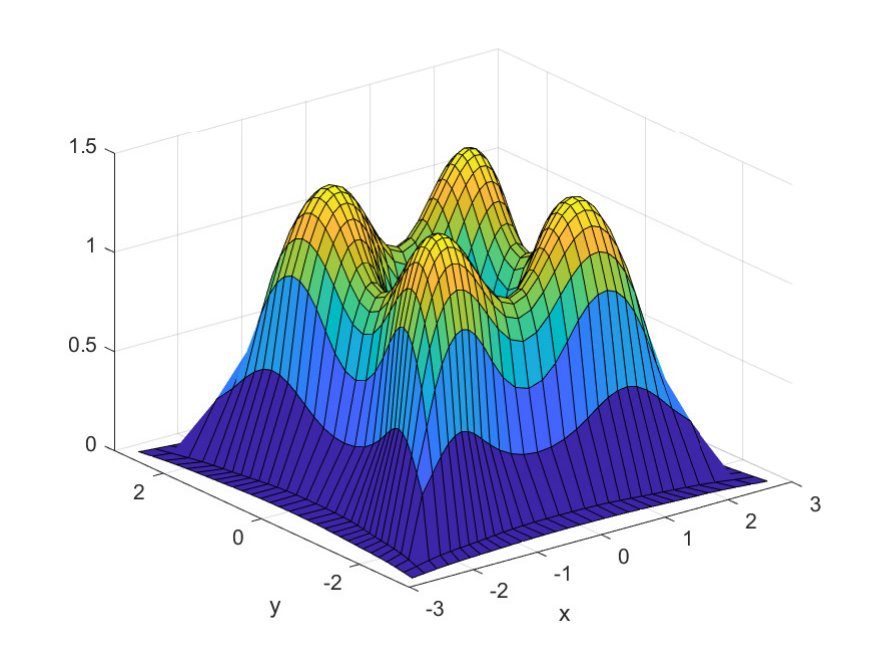}
	}
	\subfigure[Numerical solution at $t=20$]{
		\includegraphics[width=0.3\textwidth]{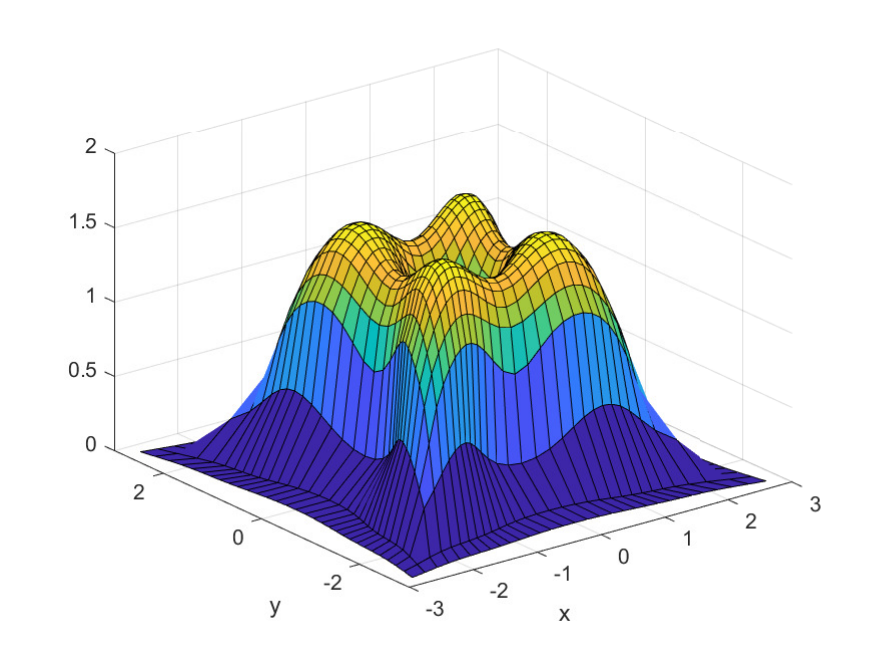}
	}
	\subfigure[Numerical solution at $t=25$]{
		\includegraphics[width=0.3\textwidth]{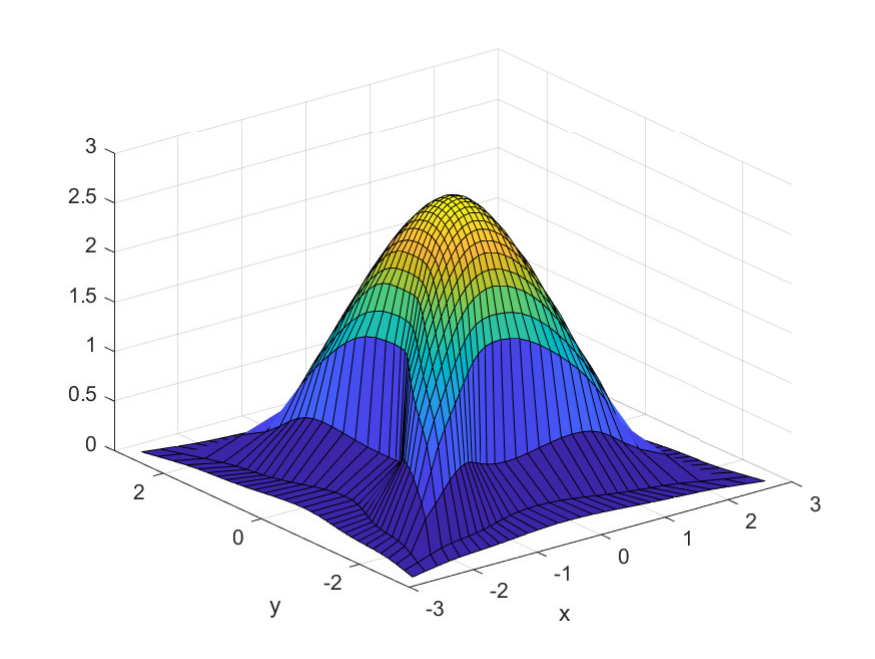}
	}
	\caption{Evolution of the numerical solution for the aggregation diffusion equation solved by \eqref{ex:1}-\eqref{ex:2} with the regularization term $\epsilon\Delta_{\bm X}{\bm x}^{k+1}$, $\epsilon=0.1\delta t$, initial value \eqref{initial:aggre} in $[-3,3]\times[-3,3]$, $m=3$, $\nu=0.1$, $M_x=M_y=32$, $\delta t=0.01$.}\label{fig:aggre,2}
\end{figure}

 \begin{figure}[!htb]
	\centering
	\subfigure[Trajectory at $t=40$]{
		\includegraphics[width=0.45\textwidth]{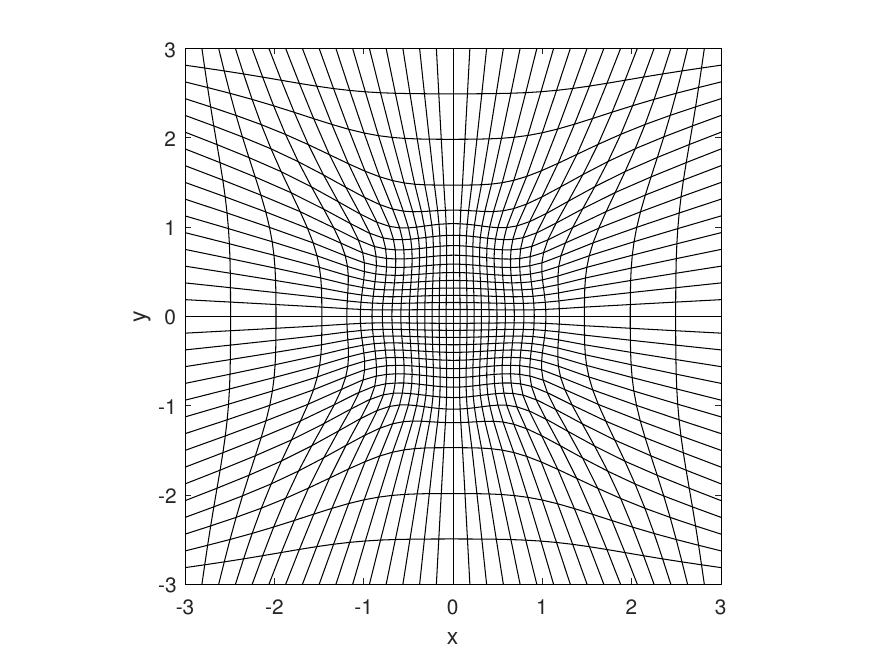}
	}
	\subfigure[Determinant value]{
		\includegraphics[width=0.45\textwidth]{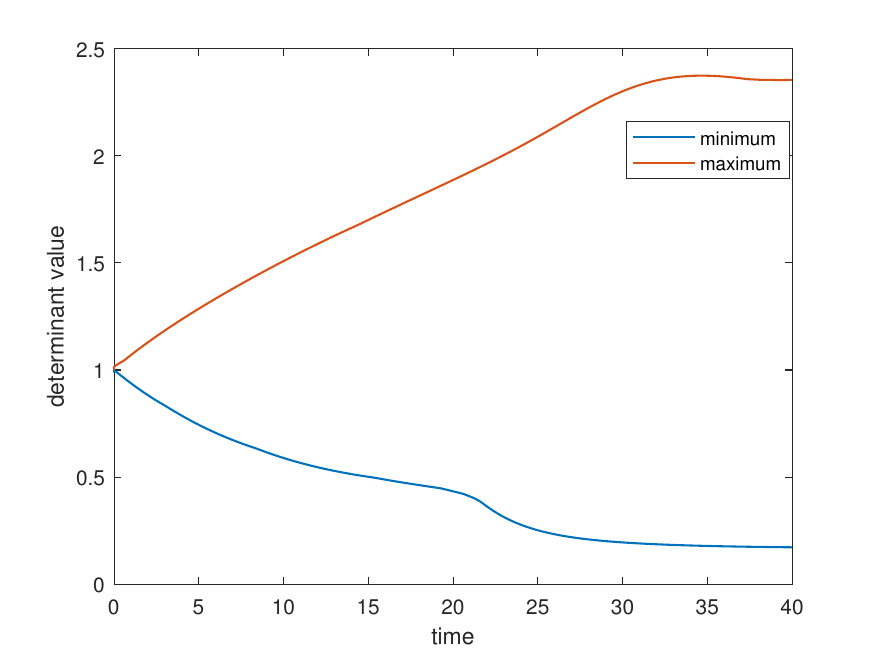}
	}
	\caption{Trajectory and determinant value for the aggregation diffusion equation  solved by \eqref{ex:1}-\eqref{ex:2} with  the regularization term $\epsilon\Delta_{\bm X}{\bm x}^{k+1}$,  $\epsilon=\delta t$, initial value \eqref{initial:aggre} in $[-3,3]\times[-3,3]$, $m=3$, $\nu=0.1$, $M_x=M_y=32$, $\delta t=0.01$. }\label{fig:aggre_diffu,det}  
\end{figure}
 We also simulate the evolution of the solution for the Keller-Segel equation, which is the aggregation-diffusion equation \eqref{eq:agg} with the kernel $W({\bm x})=\frac{1}{2\pi}\ln(|{\bm x}|)$  for $\nu=1$, $m=1$ and $m=2$, the global existence and blow-up of solutions are displayed. 
Taking the initial value \eqref{initial:ks2d}, 
and the constant $C_{2d}$ will be chosen as $1$ and $20$ in the following numerical experiments. The numerical solution is solved by scheme \eqref{ex:1}-\eqref{ex:2}  with regularization term $\epsilon\Delta_{\bm X}{\bm x}^{k+1}$, $\epsilon=0.1\delta t$. 

For the case when $m=1$, the numerical solution can be found in Figure~\ref{fig:ks2d,1}. The solution decays to zero as time increases, given $C_{2d}=1$. Conversely, when $C_{2d}=20$, the solution becomes sharply peaked at the origin, which can be regarded as the blow-up phenomenon. 
As for the case when $m=2$, we can observe from Figure~\ref{fig:ks2d,2} that the solution converges to a stable state denoted by a single bump, provided $C_{2d}=20$. 
Figure~\ref{fig:ks2d} illustrates the determinant value plots for $m=1$ and $m=2$. It can be noticed that the determinant values remain positive for the given  time.  However, since the particles are clustered at the center for $m=1$ and at the circumference for $m=2$, the particle trajectories will actually be distorted or exchanged as time increases, and the positive value of the determinant will not always be maintained.

\begin{figure}[!htb]
	\centering
	\subfigure[Density with $C_{2d}=1$ at $t=0.25$ ]{
		\includegraphics[width=0.45\textwidth]{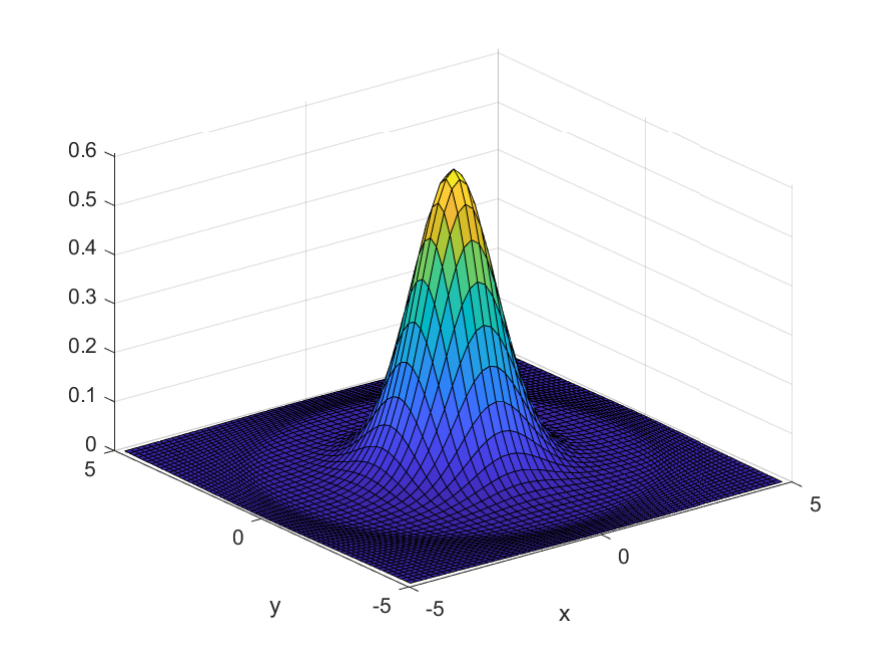}
	}
	\subfigure[Density with $C_{2d}=20$ at $t=0.25$  ]{
		\includegraphics[width=0.45\textwidth]{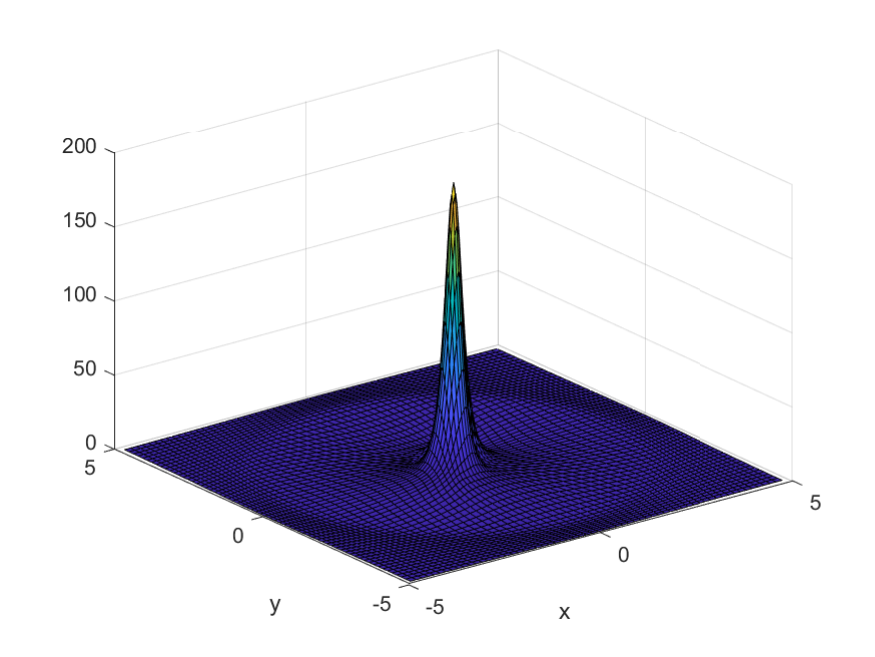}
	}
	\subfigure[Trajectory with $C_{2d}=1$ at $t=0.25$ ]{
	\includegraphics[width=0.45\textwidth]{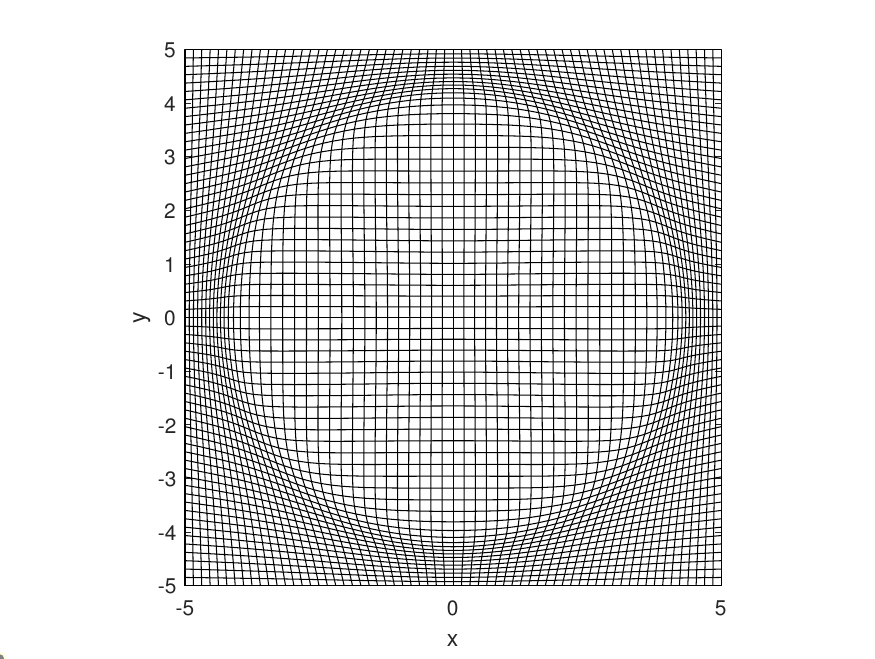}
}
\subfigure[Trajectory with $C_{2d}=20$ at $t=0.25$  ]{
	\includegraphics[width=0.45\textwidth]{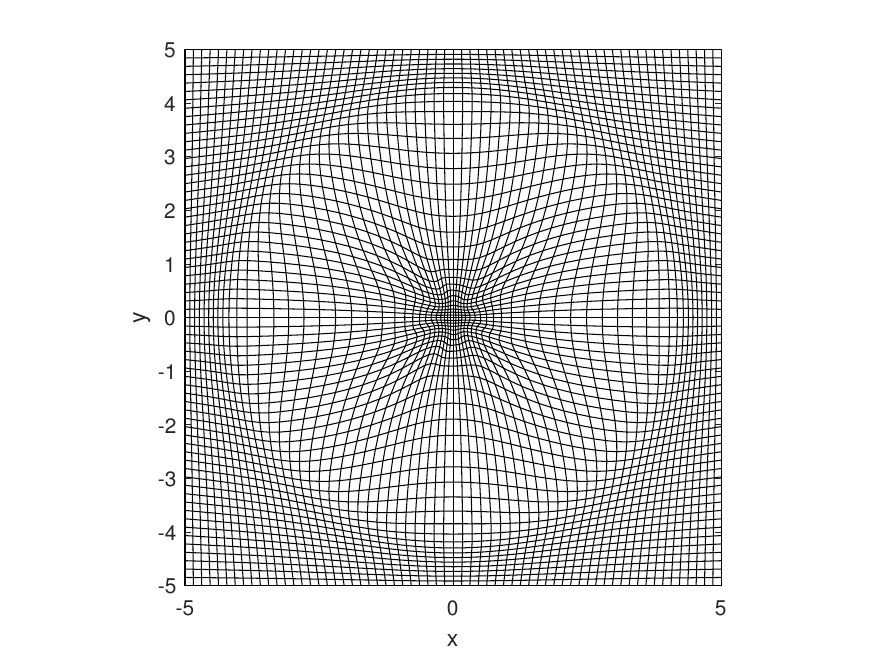}
}
	\caption{Keller-Segel model solved by \eqref{ex:1}-\eqref{ex:2} with the regularization term $\epsilon\Delta_{\bm X}{\bm x}^{k+1}$,  $\epsilon=0.1\delta t$, initial value \eqref{initial:ks2d}, $m=1$, $M_x=M_y=64$, $\delta t=0.001$.}\label{fig:ks2d,1}
\end{figure}
\begin{figure}[!htb]
	\centering
		\subfigure[Density with $C_{2d}=20$ at $t=0.12$ ]{
		\includegraphics[width=0.45\textwidth]{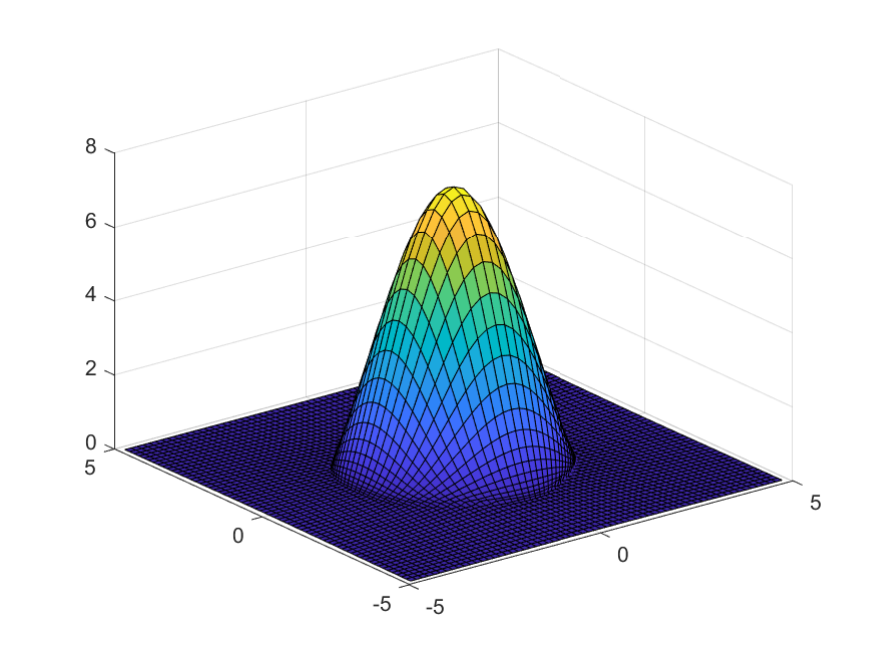}
	}
\subfigure[Trajectory with $C_{2d}=20$ at $t=0.12$ ]{
	\includegraphics[width=0.45\textwidth]{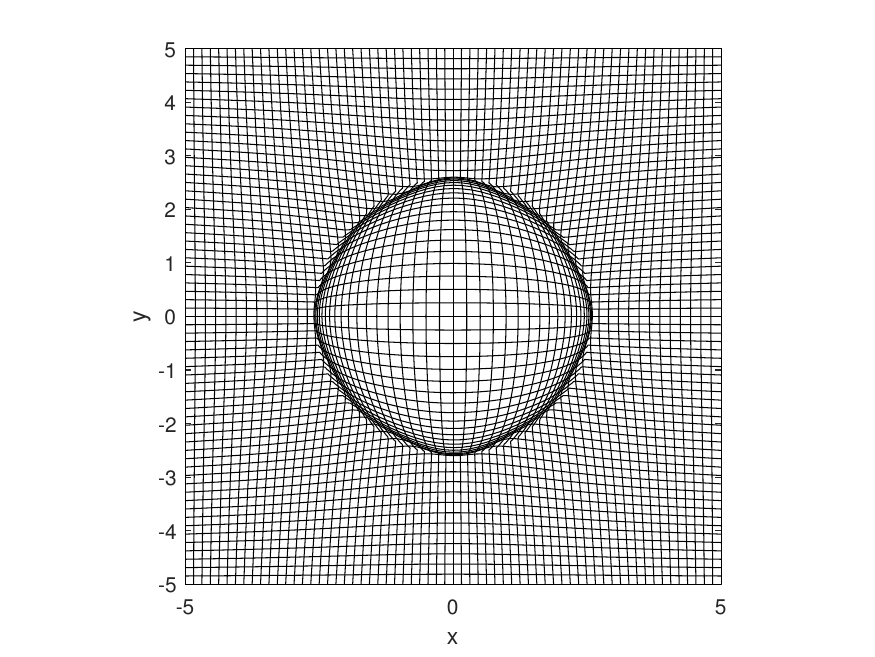}
}
	\caption{Keller-Segel model solved by \eqref{ex:1}-\eqref{ex:2} with the regularization term $\epsilon\Delta_{\bm X}{\bm x}^{k+1}$,  $\epsilon=0.1\delta t$, initial value \eqref{initial:ks2d} at $t=0.12$, $m=2$, $M_x=M_y=64$, $\delta t=0.001$.}\label{fig:ks2d,2}
\end{figure}

\begin{figure}[!htb]
	\centering
	\subfigure[$m=1$ with $C_{2d}=20$]{
		\includegraphics[width=0.45\textwidth]{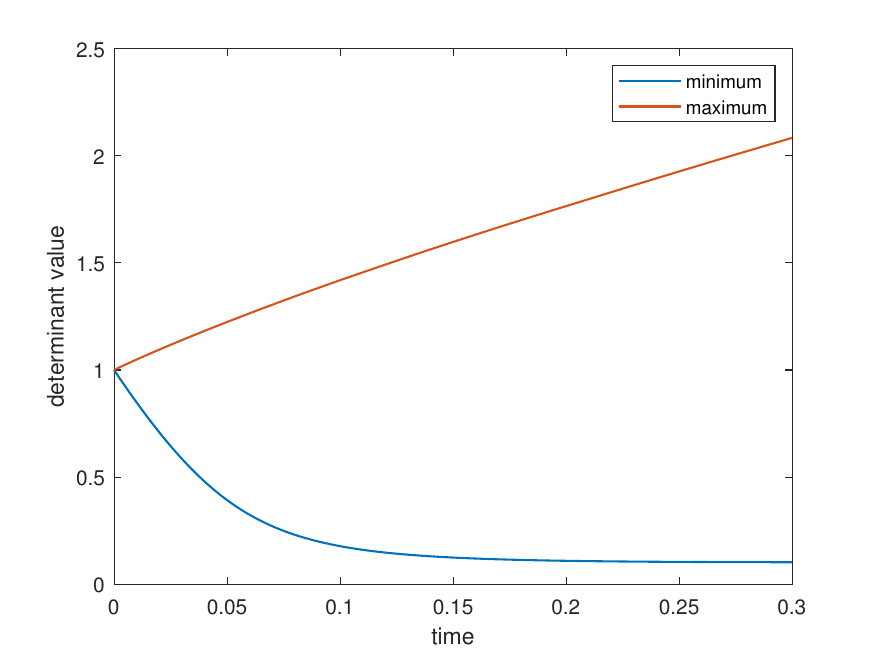}
	}
	\subfigure[$m=2$ with $C_{2d}=20$]{
		\includegraphics[width=0.45\textwidth]{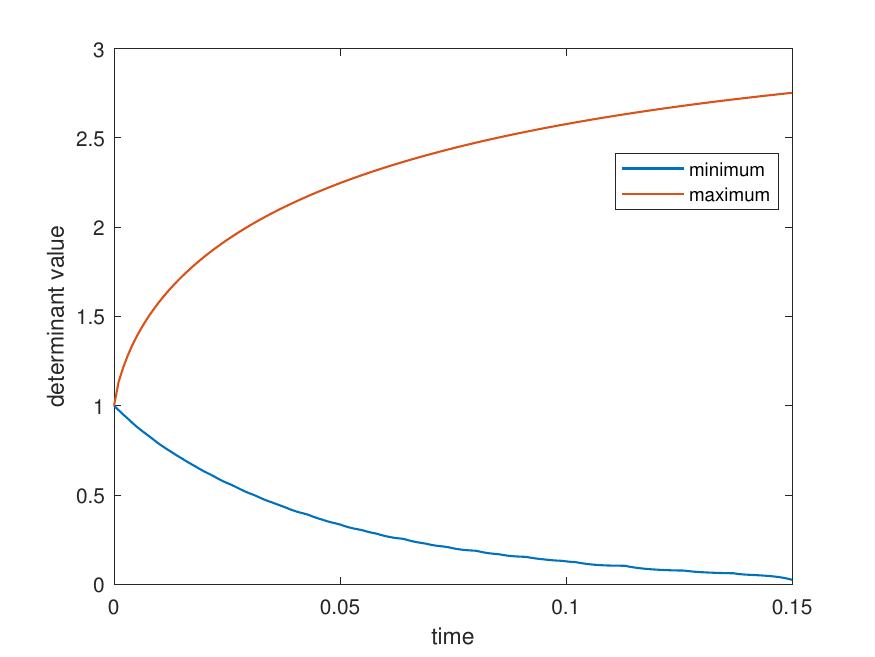}
	}
	\caption{Determinant value for the Keller-Segel model with $m=1,\ 2$,  solved by \eqref{ex:1}-\eqref{ex:2} with the regularization term $\epsilon\Delta_{\bm X}{\bm x}^{k+1}$, $\epsilon=0.1\delta t$, initial value \eqref{initial:ks2d}, $M_x=M_y=64$, $\delta t=0.001$.}\label{fig:ks2d}
\end{figure}

\section{Concluding remarks}

We constructed in this paper new numerical schemes for the Wasserstein gradient flows using a flow dynamic approach based on the Benamou-Bernier formula.  We showed that the new schemes preserve  essential structures of the Wasserstein gradient flows. More precisely, the fully discrete schemes are  shown to be positivity-preserving, mass conservative and energy dissipative. Moreover, it is shown that the schemes are uniquely solvable in the one dimensional case. 

We presented ample numerical experiments  to show that the proposed schemes are indeed positivity preserving, mass conservative and energy stable. Our numerical results also indicate  that the new schemes can capture   accurately the movement of the trajectory  and  the finite propagation speed for the Porous-Medium equation,  and can simulate blow-up phenomenon of the Keller-Segel equation.

\section*{Acknowledgements} 
Cheng is supported by NSFC 12301522, Liu is supported by NSFC 123B2015, Chen is supported by the NSFC 12071090 and NSFC 12241101, and Shen is supported by NSFC 12371409.

\appendix
\section{Appendix}
In this section, some details about the numerical experiments are presented.
\subsection{Porous-Medium and Fokker-Planck equations in 1D}
For the Porous-Medium problem with free boundaries, the following boundary condition can be obtained by using the fact that $\rho|_{\partial\Omega}=0$ as discussed in \cite{duan2019pme}:
\begin{align}
(\partial_Xx)^{m-1}\partial_tx=-\frac{m}{m-1}\frac{\partial_X(\rho(X,0))^{m-1}}{\partial_Xx}.
\end{align}
The numerical free boundary conditions are proposed as follows:
\begin{align}
&\left(\frac{x_1^k-x_0^k}{\delta X}\right)^{m-1}\frac{x_0^{k+1}-x_0^{k}}{\delta t}=-\frac{m}{m-1}\frac{\frac{(\rho(X_1,0))^{m-1}-(\rho(X_0,0))^{m-1}}{\delta X}}{\frac{x_1^{k+1}-x_0^{k+1}}{\delta X}},\label{appendix:pme_boundary1}\\
&	\left(\frac{x_{N}^k-x_{N-1}^k}{\delta X}\right)^{m-1}\frac{x_{N}^{k+1}-x_{N}^{k}}{\delta t}=-\frac{m}{m-1}\frac{\frac{(\rho(X_{N},0))^{m-1}-(\rho(X_{N-1},0))^{m-1}}{\delta X}}{\frac{x_{N}^{k+1}-x_{N-1}^{k+1}}{\delta X}}.\label{appendix:pme_boundary2}
\end{align}
The free boundaries problem for the Porous-Medium equation in the numerical experiments is solved with above free boundary conditions.

Similarly, for the Fokker-Planck equation with free boundaries, the following boundary condition can be obtained by using the fact that $\rho|_{\partial\Omega}=0$:
\begin{equation}\label{fp:free boundary}
\partial_tx=-\frac{m}{m-1}\frac{\partial_X(\rho(X,0))^{m-1}}{(\partial_Xx)^{m}}-V'(x),
\end{equation}
which can be solved by equations analogous to \eqref{appendix:pme_boundary1}-\eqref{appendix:pme_boundary2}.
\subsection{Aggregation equation in 1D}
In the subsection, we give details about the numerical experiments for the aggregation equation in 1D.

{\bf Implicit-explicit}. Let us set $x$ to be implicit and $y$ to be explicit in  scheme \eqref{schem:1}-\eqref{schem:1-2},  define the following discrete energy:
\begin{align}
&E_h({\bm x}^{k+1})=\delta X\sum_{i=0}^{N-1}\rho(X_{i+\frac{1}{2}},0)\sum_{j=0}^{N-1}\rho_{j+\frac{1}{2}}^k\int_{x_j^k}^{x_{j+1}^k}\left(\frac{|x_{i+\frac{1}{2}}^{k+1}-y|^2}{2}-\ln(|x_{i+\frac{1}{2}}^{k+1}-y|)\right)\mathrm{d}y,\nonumber\\
=&\delta X\sum_{i=0}^{N-1}\rho(X_{i+\frac{1}{2}},0)\sum_{j=0}^{N-1}\rho_{j+\frac{1}{2}}^k\left(-\frac{1}{6}(x_{i+\frac{1}{2}}^{k+1}-y)^3+(x_{i+\frac{1}{2}}^{k+1}-y)\ln(|x_{i+\frac{1}{2}}^{k+1}-y|)-(x_{i+\frac{1}{2}}^{k+1}-y)\right)\Big|_{x_j^k}^{x_{j+1}^k}\nonumber\\
=&\delta X\sum_{i=0}^{N-1}\rho(X_{i+\frac{1}{2}},0)\sum_{j=0}^{N-1}\rho_{j+\frac{1}{2}}^k\left(-\frac{1}{6}(x_{i+\frac{1}{2}}^{k+1}-x_{j+1}^k)^3+\frac{1}{6}(x_{i+\frac{1}{2}}^{k+1}-x_{j}^k)^3+x_{j+1}^k-x_j^k\right)\nonumber\\
&+\delta X\sum_{i=0}^{N-1}\rho(X_{i+\frac{1}{2}},0)\sum_{j=0}^{N-1}\rho_{j+\frac{1}{2}}^k\left((x_{i+\frac{1}{2}}^{k+1}-x_{j+1}^k)\ln(|x_{i+\frac{1}{2}}^{k+1}-x_{j+1}^k|)-(x_{i+\frac{1}{2}}^{k+1}-x_{j}^k)\ln(|x_{i+\frac{1}{2}}^{k+1}-x_{j}^k|)\right),\label{eq:energy}
\end{align}
and the following result can be obtained by simple calculations:
\begin{align}
\frac{\delta E_h^{k+1}}{\delta x_i^{k+1}}=&\delta X\rho(X_{i+\frac{1}{2}},0)\sum_{j=0}^{N-1}\rho_{j+\frac{1}{2}}^k\left(-\frac{1}{4}(x_{i+\frac{1}{2}}^{k+1}-x_{j+1}^k)^2+\frac{1}{4}(x_{i+\frac{1}{2}}^{k+1}-x_{j}^k)^2\right)\nonumber\\
&+\delta X\rho(X_{i-\frac{1}{2}},0)\sum_{j=0}^{N-1}\rho_{j+\frac{1}{2}}^k\left(-\frac{1}{4}(x_{i-\frac{1}{2}}^{k+1}-x_{j+1}^k)^2+\frac{1}{4}(x_{i-\frac{1}{2}}^{k+1}-x_{j}^k)^2\right)\nonumber\\
&+\delta X\rho(X_{i+\frac{1}{2}},0)\sum_{j=0}^{N-1}\rho_{j+\frac{1}{2}}^k\left(\frac{1}{2}\ln(|x_{i+\frac{1}{2}}^{k+1}-x_{j+1}^k|)-\frac{1}{2}\ln(|x_{i+\frac{1}{2}}^{k+1}-x_{j}^k|)\right)\nonumber\\
&+\delta X\rho(X_{i-\frac{1}{2}},0)\sum_{j=0}^{N-1}\rho_{j+\frac{1}{2}}^k\left(\frac{1}{2}\ln(|x_{i-\frac{1}{2}}^{k+1}-x_{j+1}^k|)-\frac{1}{2}\ln(|x_{i-\frac{1}{2}}^{k+1}-x_{j}^k|)\right).\label{appendix:aggre_xi_ye}
\end{align}
  One can also find that
	\begin{align}
	\frac{\delta^2 E_h^{k+1}}{\delta (x_i^{k+1})^2}=&\delta X(\rho(X_{i+\frac{1}{2}},0)+\rho(X_{i-\frac{1}{2}},0))\sum_{j=0}^{N-1}\frac{\rho_{j+\frac{1}{2}}^k}{4}(x_{j+1}^{k}-x_{j}^k)\nonumber\\
	&+\delta X\rho(X_{i+\frac{1}{2}},0)\sum_{j=0}^{N-1}\frac{\rho_{j+\frac{1}{2}}^k}{4}\left(\frac{1}{x_{i+\frac{1}{2}}^{k+1}-x_{j+1}^k}-\frac{1}{x_{i+\frac{1}{2}}^{k+1}-x_{j}^k}\right)\nonumber\\
	&+\delta X\rho(X_{i-\frac{1}{2}},0)\sum_{j=0}^{N-1}\frac{\rho_{j+\frac{1}{2}}^k}{4}\left(\frac{1}{x_{i-\frac{1}{2}}^{k+1}-x_{j+1}^k}-\frac{1}{x_{i-\frac{1}{2}}^{k+1}-x_{j}^k}\right)>0,\nonumber
	\end{align}
	where the fact that $\frac{1}{x-y}|_{a}^{b}=\int_{a}^{b}\frac{1}{(x-y)^2}\mathrm{d}y>0$ for $a<b$ has been utilized in the last inequality. Then the discrete energy is convex since the Hessian matrix $\nabla^2E_h$ is positive definite.

{\bf Explicit-implicit}. If we set $x$ to be explicit and $y$ to be implicit in scheme \eqref{schem:1}-\eqref{schem:1-2}, i.e.\,
\begin{align}
\frac{\delta E_h^{k+1}}{\delta x_i^{k+1}}=&\delta X\rho(X_{i+\frac{1}{2}},0)\sum_{j=0}^{N-1}\rho_{j+\frac{1}{2}}^k\left(-\frac{1}{4}(x_{i+\frac{1}{2}}^{k}-x_{j+1}^{k+1})^2+\frac{1}{4}(x_{i+\frac{1}{2}}^{k}-x_{j}^{k+1})^2\right)\nonumber\\
&+\delta X\rho(X_{i-\frac{1}{2}},0)\sum_{j=0}^{N-1}\rho_{j+\frac{1}{2}}^k\left(-\frac{1}{4}(x_{i-\frac{1}{2}}^{k}-x_{j+1}^{k+1})^2+\frac{1}{4}(x_{i-\frac{1}{2}}^{k}-x_{j}^{k+1})^2\right)\nonumber\\
&+\delta X\rho(X_{i+\frac{1}{2}},0)\sum_{j=0}^{N-1}\rho_{j+\frac{1}{2}}^k\left(\frac{1}{2}\ln(|x_{i+\frac{1}{2}}^{k}-x_{j+1}^{k+1}|)-\frac{1}{2}\ln(|x_{i+\frac{1}{2}}^{k}-x_{j}^{k+1}|)\right)\nonumber\\
&+\delta X\rho(X_{i-\frac{1}{2}},0)\sum_{j=0}^{N-1}\rho_{j+\frac{1}{2}}^k\left(\frac{1}{2}\ln(|x_{i-\frac{1}{2}}^{k}-x_{j+1}^{k+1}|)-\frac{1}{2}\ln(|x_{i-\frac{1}{2}}^{k}-x_{j}^{k+1}|)\right).\label{appendix:aggre_xe_yi}
\end{align}

{\bf Implicit-implicit}. If we take both $x$ and $y$ implicit in scheme \eqref{schem:1}-\eqref{schem:1-2}, set $\frac{\delta E_h^{k+1}}{\delta x_i^{k+1}}$ as follows: 
\begin{align}
\frac{\delta E_h^{k+1}}{\delta x_i^{k+1}}=&\delta X\rho(X_{i+\frac{1}{2}},0)\sum_{j=0}^{N-1}\rho_{j+\frac{1}{2}}^k\left(-\frac{1}{4}(x_{i+\frac{1}{2}}^{k+1}-x_{j+1}^{k+1})^2+\frac{1}{4}(x_{i+\frac{1}{2}}^{k+1}-x_{j}^{k+1})^2\right)\nonumber\\
&+\delta X\rho(X_{i-\frac{1}{2}},0)\sum_{j=0}^{N-1}\rho_{j+\frac{1}{2}}^k\left(-\frac{1}{4}(x_{i-\frac{1}{2}}^{k+1}-x_{j+1}^{k+1})^2+\frac{1}{4}(x_{i-\frac{1}{2}}^{k+1}-x_{j}^{k+1})^2\right)\nonumber\\
&+\delta X\rho(X_{i+\frac{1}{2}},0)\sum_{j=0}^{N-1}\rho_{j+\frac{1}{2}}^k\left(\frac{1}{2}\ln(|x_{i+\frac{1}{2}}^{k+1}-x_{j+1}^{k+1}|)-\frac{1}{2}\ln(|x_{i+\frac{1}{2}}^{k+1}-x_{j}^{k+1}|)\right)\nonumber\\
&+\delta X\rho(X_{i-\frac{1}{2}},0)\sum_{j=0}^{N-1}\rho_{j+\frac{1}{2}}^k\left(\frac{1}{2}\ln(|x_{i-\frac{1}{2}}^{k+1}-x_{j+1}^{k+1}|)-\frac{1}{2}\ln(|x_{i-\frac{1}{2}}^{k+1}-x_{j}^{k+1}|)\right).\label{appendix:aggre_xi_yi}
\end{align}

\subsection{Keller-Segel model in 1D}
The discrete energy for the Keller-Segel model in 1D is defined as follows:
\begin{align}
	E_h({\bm x}^{k+1})=&\sum_{i=0}^{N-1}\delta X\rho(X_{i+\frac{1}{2}},0)\log\left(\frac{\rho(X_{i+\frac{1}{2}},0)}{\frac{x_{i+1}^{k+1}-x_i^{k+1}}{\delta X}}\right)-\frac{\delta X}{2\pi}\sum_{i=0}^{N-1}\rho(X_{i+\frac{1}{2}},0)\nonumber\\
	\times\sum_{j=0}^{N-1}&\rho_{j+\frac{1}{2}}^k\left((x_{i+\frac{1}{2}}^{k+1}-x_{j+1}^k)\ln(|x_{i+\frac{1}{2}}^{k+1}-x_{j+1}^k|)-(x_{i+\frac{1}{2}}^{k+1}-x_{j}^k)\ln(|x_{i+\frac{1}{2}}^{k+1}-x_{j}^k|)\right),\nonumber
	\end{align}
it can be calculated that
\begin{align}
\frac{\delta E_h^{k+1}}{\delta x_i^{k+1}}=
&-\frac{\delta X}{2\pi}\rho(X_{i+\frac{1}{2}},0)\sum_{j=0}^{N-1}\rho_{j+\frac{1}{2}}^k\left(\frac{1}{2}\ln(|x_{i+\frac{1}{2}}^{k+1}-x_{j+1}^k|)-\frac{1}{2}\ln(|x_{i+\frac{1}{2}}^{k+1}-x_{j}^k|)\right)\nonumber\\
&-\frac{\delta X}{2\pi}\rho(X_{i-\frac{1}{2}},0)\sum_{j=0}^{N-1}\rho_{j+\frac{1}{2}}^k\left(\frac{1}{2}\ln(|x_{i-\frac{1}{2}}^{k+1}-x_{j+1}^k|)-\frac{1}{2}\ln(|x_{i-\frac{1}{2}}^{k+1}-x_{j}^k|)\right)\nonumber\\
&+\frac{\delta X\rho(X_{i+\frac{1}{2}},0)}{x_{i+1}^{k+1}-x_{i}^{k+1}}-\frac{\delta X\rho(X_{i-\frac{1}{2}},0)}{x_{i}^{k+1}-x_{i-1}^{k+1}},\label{appendix:ks}
\end{align}
 and we have
	\begin{align}
	\frac{\delta^2 E_h}{\delta (x_i^{k+1})^2}=
	&-\frac{\delta X}{2\pi}\rho(X_{i+\frac{1}{2}},0)\sum_{j=0}^{N-1}\frac{\rho_{j+\frac{1}{2}}^k}{4}\left(\frac{1}{x_{i+\frac{1}{2}}^{k+1}-x_{j+1}^k}-\frac{1}{x_{i+\frac{1}{2}}^{k+1}-x_{j}^k}\right)\nonumber\\
	&-\frac{\delta X}{2\pi}\rho(X_{i-\frac{1}{2}},0)\sum_{j=0}^{N-1}\frac{\rho_{j+\frac{1}{2}}^k}{4}\left(\frac{1}{x_{i-\frac{1}{2}}^{k+1}-x_{j+1}^k}-\frac{1}{x_{i-\frac{1}{2}}^{k+1}-x_{j}^k}\right)\nonumber\\
	&+\frac{\delta X\rho(X_{i+\frac{1}{2}},0)}{(x_{i+1}^{k+1}-x_{i}^{k+1})^2}+\frac{\delta X\rho(X_{i-\frac{1}{2}},0)}{(x_{i}^{k+1}-x_{i-1}^{k+1})^2}.
	\end{align}
	Notice that
	\begin{align}
	-\sum_{j=0}^{N-1}\rho_{j+\frac{1}{2}}^k\left(\frac{1}{x_{i+\frac{1}{2}}^{k+1}-x_{j+1}^k}-\frac{1}{x_{i+\frac{1}{2}}^{k+1}-x_{j}^k}\right)=\sum_{j=0}^{N-1}\rho_{j+\frac{1}{2}}^k\int_{x_j^k}^{x_{j+1}^{k}}\frac{-1}{(x_{i+\frac{1}{2}}^{k+1}-y)^2}\mathrm{d}y<0,\\
	-\sum_{j=0}^{N-1}\rho_{j+\frac{1}{2}}^k\left(\frac{1}{x_{i-\frac{1}{2}}^{k+1}-x_{j+1}^k}-\frac{1}{x_{i-\frac{1}{2}}^{k+1}-x_{j}^k}\right)=\sum_{j=0}^{N-1}\rho_{j+\frac{1}{2}}^k\int_{x_j^k}^{x_{j+1}^{k}}\frac{-1}{(x_{i-\frac{1}{2}}^{k+1}-y)^2}\mathrm{d}y<0.
	\end{align}
	However, the positivity or negativity of  $\frac{\delta^2 E_h}{\delta (x_i^{k+1})^2}$ is not determined.
	Whether the discrete energy is convex or not is uncertain,  since the positivity or negativity of the Hessian matrix $\nabla^2E_h$ is not clear.

	\bibliographystyle{plain}
	
\bibliography{ref.bib}

\end{document}